\documentclass[11pt]{amsart}
\usepackage{amssymb}
\usepackage{graphicx}
\usepackage{xcolor} 
\usepackage{tensor}
\usepackage{fullpage} 
\usepackage{tikz}

\usepackage{amsmath}
\usepackage{amsthm}
\usepackage{verbatim}
\usepackage{hyperref}
\usepackage{array} 
\usepackage{enumitem}

\setlist[enumerate]{leftmargin=1.8em}
\setlist[itemize]{leftmargin=1.8em}
\usepackage{seqsplit,mathtools}

\definecolor{green}{rgb}{0,0.8,0} 

\newtheorem{theorem}{Theorem}[section]

\newtheorem{lemma}[theorem]{Lemma}

\newtheorem{proposition}[theorem]{Proposition}

\theoremstyle{definition}

\theoremstyle{remark}
\newtheorem{remark}[theorem]{Remark}

\numberwithin{equation}{section}
\newcommand{\nrm}[1]{\Vert#1\Vert}

\newcommand{\tld}[1]{\widetilde{#1}}

\newcommand{\nnrm}[1]{{\vert\kern-0.25ex\vert\kern-0.25ex\vert #1 
		\vert\kern-0.25ex\vert\kern-0.25ex\vert}}

\newcommand{\supp}{{\mathrm{supp}}\,}

\newcommand{\lap}{\Delta}

\newcommand{\rd}{\partial}
\newcommand{\nb}{\nabla}

\newcommand{\alp}{\alpha}
\newcommand{\bt}{\beta}

\newcommand{\dlt}{\delta}

\newcommand{\eps}{\varep}

\newcommand{\kpp}{\kappa}
\newcommand{\lmb}{\lambda}

\newcommand{\omg}{\omega}

\newcommand{\varep}{\varepsilon}

\newcommand{\bfe}{{\bf e}}

\newcommand{\bfG}{{\bf G}}

\newcommand{\bfK}{{\bf K}}

\newcommand{\bbR}{\mathbb R}

\newcommand{\calA}{\mathcal A}

\newcommand{\ohp}{\mathbb{R}^{2}_{+}}

\definecolor{purple}{rgb}{0.65, 0, 1}
\definecolor{orange}{rgb}{1,.5,0}

\begin{document}

	\title{Lamb dipole stability without sign condition}

	\author{Ken Abe}
	\address{Department of Mathematics, Graduate School of Science, Osaka Metropolitan University, 3-3-138 Sugimoto, Sumiyoshi-ku Osaka, 558-8585, Japan.}
	\email{kabe@omu.ac.jp}
	
	\author{Kyudong Choi}
	\address{Department of Mathematical Sciences, Ulsan National Institute of Science and Technology, 50 UNIST-gil, Eonyang-eup, Ulju-gun, Ulsan 44919, Republic of Korea.}
	\email{kchoi@unist.ac.kr}
	
	\author{In-Jee Jeong}
	\address{School of Mathematics, Korea Institute for Advanced Study, Seoul 02445, Republic of Korea.}
	\email{ijeong@kias.re.kr}
	
	\author{Guolin Qin}
	\address{State Key Laboratory of Mathematical Sciences, Academy of Mathematics and Systems Science, Chinese Academy of Sciences, 100190 Beijing, P.R. China}
	\email{qinguolin18@mails.ucas.ac.cn}

	\date{\today}

	\renewcommand{\thefootnote}{\fnsymbol{footnote}}
	\footnotetext{\emph{2020 AMS Mathematics Subject Classification:} 76B47, 35Q35, 35B40}
	\footnotetext{\emph{Key words: vortex stability, Lamb dipole, variational principle, Lagrangian bootstrapping, multi-soliton stability} }
	\renewcommand{\thefootnote}{\arabic{footnote}}

	\begin{abstract} 
		For the 2D incompressible Euler equations on the half-plane, we establish Lyapunov stability of the Lamb dipole without sign condition on the vorticity. The main challenge arises from growth of the $x_{2}$ weighted norm for mixed sign vorticities, whereas this norm is conserved and responsible for dipole stability in the single signed vorticity case. This difficulty is handled by a Lagrangian bootstrap argument, based on carefully dividing the solution into several pieces and analyzing interactions among them, with the most delicate interaction involving vorticity drifting to infinity in the vertical direction. We expect that the strategy developed here will provide a starting point for the stability analysis of more general vortex configurations, including interactions among multiple vortex dipoles and among multiple vortex rings in settings involving mixed-sign vorticity.
	\end{abstract}
	
	\maketitle  
	
	\date{\today}

	\section{Introduction}\label{sec:intro}

	We study the vorticity evolution for incompressible and inviscid fluid in the upper half-plane $\bbR^2_+ = \left\{ x  : x_{2} \ge 0 \right\}$: the vorticity $\omg$ satisfies \begin{equation}\label{eq:2D-Euler}
		\left\{
		\begin{aligned}
			\rd_t \omg + u \cdot \nb \omg = 0, & \\
			u = -\nb^{\perp} (-\lap)^{-1} \omg ,&
		\end{aligned}
		\right.
	\end{equation}  where $\nb^\perp = (-\rd_{x_2}, \rd_{x_1})$ and $(-\lap)^{-1}$ is the inverse negative Laplacian  in $\bbR^2_+$. The half-plane problem \eqref{eq:2D-Euler} is equivalent with studying vorticity defined in the whole plane $\bbR^2$ with odd symmetry in $x_{2}$. For this reason, the half-plane problem provides a simple setup for studying \textit{vortex dipoles}, a pair of counter-rotating vortex propagating along the $x_{1}$ axis. 
	
	In the mathematical literature, vortex dipoles are usually studied by showing existence of \textit{traveling waves} to \eqref{eq:2D-Euler} and then their dynamical stability. However, all the existing nonlinear stability results for traveling waves requires the \textbf{sign condition} $\omg \ge 0$ to hold in $\bbR^2_+$, which is not satisfactory from a physical point of view. This restriction can be attributed to the following disparity: while the key quantity responsible for vortex dipole stability is the weighted norm \begin{equation}\label{eq:weight}
		\begin{split}
			\nrm{\omg}_{L^1_*(\bbR^2_+)} := \int_{\bbR^2_+} x_{2}|\omg(x)| dx, 
		\end{split}
	\end{equation} available conserved quantity is the \textit{vorticity impulse}, which is \eqref{eq:weight} without the absolute value: 
	\begin{equation}\label{eq:impulse}
		\begin{split}
			\mu[\omg] := \int_{\bbR^2_+} x_{2}\omg(x)\, dx. 
		\end{split}
	\end{equation} (One may observe that \eqref{eq:impulse} equals the first component of the linear momentum, $\int_{\bbR^2_+} u_{1}(x) dx$, so that $\mu[\omg] > 0$ shows tendency of fluid particles to drift to the right.) Under the sign assumption $\omg \ge 0$, these two quantities coincide and control the amount of vorticity far from $\partial\bbR^2_+$. On the other hand, as soon as the initial vorticity $\omg_{0}$ is allowed to have a small negative part, it is expected that the weighted norm \eqref{eq:weight} grows linearly in time for the solution of \eqref{eq:2D-Euler}, which ruins the existing stability arguments. These technical issues will be discussed in detail below, \S \ref{subsec:discuss}. 
	
	In the current work, we obtain nonlinear stability of the classical Lamb dipole, without the sign condition on the initial data. The idea is to view this stability issue as an \textit{interaction problem} between various parts of the solution, where the most challenging interaction comes from negative signed vorticity traveling to $x_{2}\to\infty$. This is nontrivial not just because the (initially small) negative part grows in the norm \eqref{eq:weight}, but also because the interaction kernel has singularity and decays rather slowly. 
	
	\subsection{The Lamb dipole and main result} The Lamb (or Chaplygin--Lamb) dipole is one of the most well-known traveling wave of 2D Euler, discovered independently by Lamb and Chaplygin (\cite{Lamb,Chap1903}) more than a century ago. We consider the following \textit{normalized Lamb dipole} $\omg_{Lamb}$ on $\mathbb{R}^{2}_+$: 
	\begin{equation}\label{eq:Lamb-def}
		\begin{split}
			\omg_{Lamb}(x) := \left(\frac{2c_{L}}{- J_{0}(c_{L}) }\right) \frac{  x_{2} J_{1}(c_{L}|x|)}{|x| }  \, \mathbf{1}_{ \{ |x| \le 1\} } (x),\quad x\in\mathbb{R}^2_+.
		\end{split}
	\end{equation} Here, $J_m(r)$ is the $m$-th order Bessel function of the first kind and $c_{L}\simeq 3.8317$ is the first positive zero of $J_1(r)$. It is normalized in the sense that $\omg_{Lamb}$ is supported on the unit half disk $B^+_1:=\{x\in\mathbb{R}^2_+\,:\, |x| \le 1 \},$ and defines a traveling wave of \eqref{eq:2D-Euler} in $\bbR^2_+$ with unit speed in the $x_{1}$-direction; $\omg_{Lamb}(x - t\mathbf{e}_{1} )$ provides a solution to  \eqref{eq:2D-Euler}. More detailed properties of $\omg_{Lamb}$ are given in \S \ref{subsec:Lamb} below.
	
	\begin{figure}
		\centering
		\includegraphics[width=0.4\linewidth]{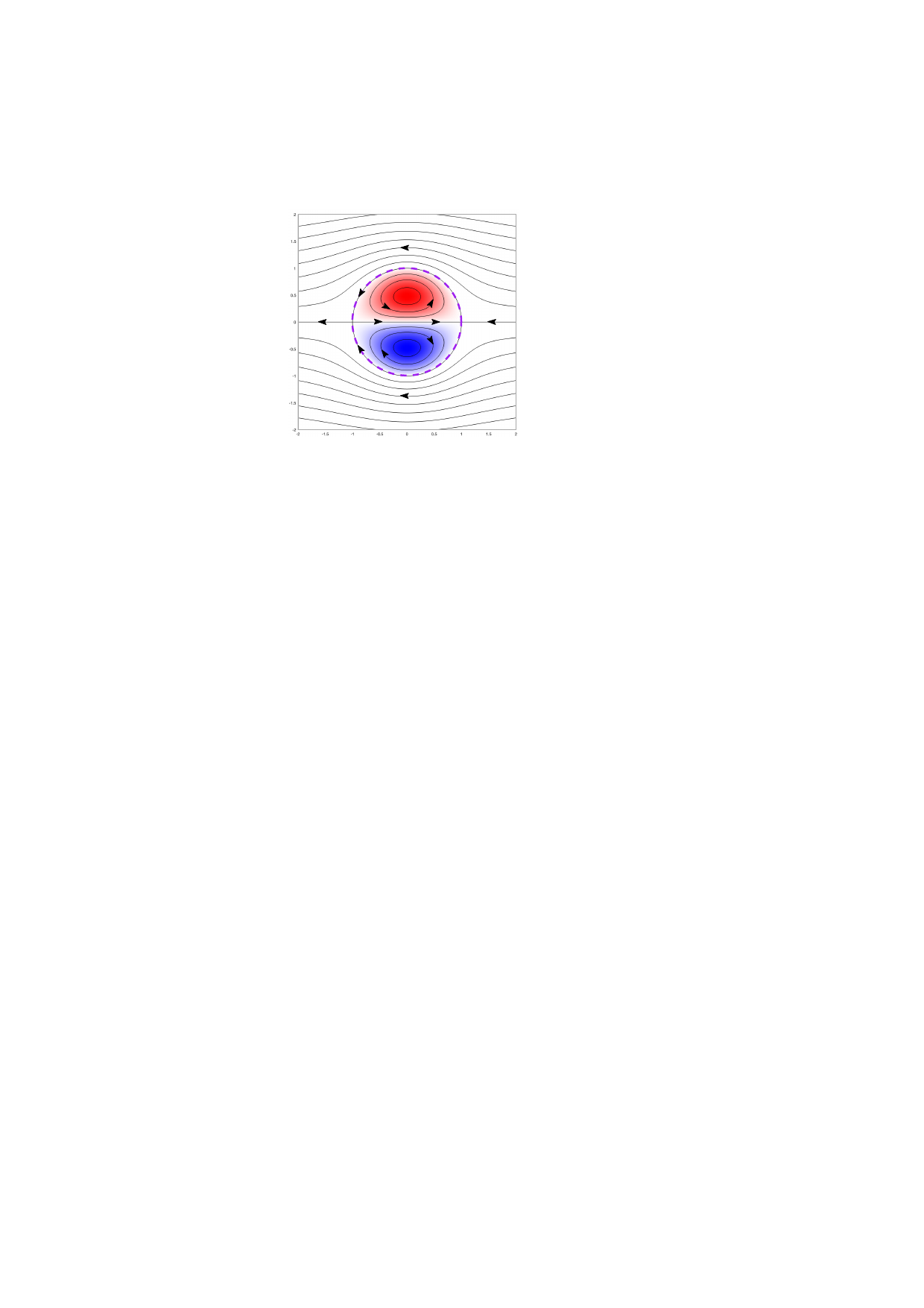}
		\caption{The Lamb dipole and its velocity in the moving frame (Figure from \cite{AJY})}
		\label{fig:lamb}
	\end{figure}
	
	The nonlinear stability of Lamb dipole was proved in \cite{AC2019} under the {sign condition} $\omg_{0} \ge 0$ in $\bbR^2_+$. Our main result remove this assumption, at the cost of assuming some decay \eqref{assump_moment} for the data. For simplicity, let us write $\nrm{\rho}_{X \cap Y} = \nrm{\rho}_{X} + \nrm{\rho}_{Y}$ for two norms $X$ and $Y$.   \begin{theorem}  \label{thm:main}
		Let $A > 0$. 
		For any  $\varep > 0$, there exist constants $C=C(A) > 1,$ $ \dlt = \dlt(A,  \epsilon)>0$ such that the following stability holds: for any  $\omg_{0} \in (L^{1} \cap L^{\infty}) (\bbR^2_+)$ satisfying \begin{equation}\label{eq:mainthm-assumptions}
			\begin{split}
				\nrm{\omg_{0} -  {\omg}_{Lamb}}_{(L^{2} \cap L^{1}_{*})(\ohp)} < \dlt,\quad 
				\nrm{\omg_{0} }_{ L^{\infty}(\ohp)} \le A ,\quad\mbox{and}
			\end{split}
		\end{equation} 
		\begin{equation}\label{assump_moment}
			\int_{\{x\in\mathbb{R}^2_+\,:\,|x|>1\}}|x||\omg_0(x)|dx<\dlt,
		\end{equation}
		the unique solution $\omg(t,\cdot)$ to \eqref{eq:2D-Euler} for the initial data $\omega_0$ satisfies \begin{equation}\label{eq:mainthm-conclusion}
			\begin{split}
				\nrm{\omg(t,\cdot +p_{1}(t) \bfe_{1} ) -  {\omg}_{Lamb}( \cdot )}_{(L^{2}\cap L^1)(\ohp) \cap L^{1}_{*}{( \{ |x_{1} | \le C
						\varep^{-1} \}  )}} < \varep,\quad t\in\mathbb{R},
			\end{split}
		\end{equation} for some Lipschitz function  $p_{1}:\mathbb{R}\to\mathbb{R}$  satisfying $|p_{1}(t) - t| \le \varep^{1/2}(1+|t|)$ for all $t \in \bbR$.  
	\end{theorem}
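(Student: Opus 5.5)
Our approach is a bootstrap that combines the variational (energy--impulse) characterization of $\omg_{Lamb}$ from \cite{AC2019} with Lagrangian control of the negative part of the vorticity. We split the data as $\omg_0=\omg_0^+-\omg_0^-$. By \eqref{eq:mainthm-assumptions} and \eqref{assump_moment}, the negative part is small:
\[
\nrm{\omg_0^-}_{L^1\cap L^2}+\nrm{\omg_0^-}_{L^1_*}\lesssim\dlt^{1/2}.
\]
Both parts are transported by the common velocity $u$, so $\omg^\pm(t)=\omg_0^\pm\circ\Phi_t^{-1}$. All $L^p$ norms of $\omg^\pm$ are conserved, and so are the energy $E[\omg]$, the impulse $\mu[\omg]$ and the mass. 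The stability theorem of \cite{AC2019} holds for nonnegative vorticity. Its compactness form says that any nonnegative $\theta$ with $\nrm{\theta}_{L^2}$, $\nrm{\theta}_{L^1}$ and $\mu[\theta]$ close to those of $\omg_{Lamb}$ and $E[\theta]$ close to $E[\omg_{Lamb}]$ is close to a translate of $\omg_{Lamb}$ in $L^2\cap L^1_*$. We will apply it to $\theta=\omg^+(t)$. Then
\[
\mu[\omg^+(t)]=\mu[\omg_0]+\int x_2\,\omg^-(t)\,dx ,
\]
and $E[\omg^+]$ differs from $E[\omg]$ by cross terms $\int G(\omg^+)\,\omg^-$ and $\int G(\omg^-)\,\omg^-$, where $G$ is the half-plane Green operator. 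The whole proof therefore reduces to showing, for all $t$, that
\[
\int x_2\,\omg^-(t)\,dx \quad\text{and}\quad \int G(\omg^+)\,\omg^-\,dx
\]
stay $o(1)$ as $\dlt\to0$, at least up to a cutoff in $|x_1-p_1(t)|\le C\varep^{-1}$.

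\textbf{Step 1 (local-in-time continuity).} For a time $T\sim\varep^{-1/2}$ the weighted norm of $\omg^-$ grows at most linearly, since $|u|\lesssim_A 1$. Hence on $[0,T]$ the impulse defect is $O(\dlt^{1/2}T)$. This yields closeness to a translate $\omg_{Lamb}(\cdot-p_1(t)\bfe_1)$. We define $p_1$ by a center-of-mass type functional of $\omg^+$ localized near the core, which makes it Lipschitz with $|\dot p_1-1|\lesssim\varep^{1/2}$.

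\textbf{Step 2 (Lagrangian bootstrap for the negative part).} We partition $\omg^-$ into three pieces. The piece (i) near the dipole core, $|x-p_1\bfe_1|\le R$, has small mass. The piece (ii) far away at bounded height sees velocity $\approx$ the dipole's far field, which decays like $|x-p_1\bfe_1|^{-2}$. The piece (iii) drifts to $x_2\to\infty$. For particles starting in (ii) and (iii) we track trajectories relative to the moving frame. The dipole field in the comoving frame is $\approx-\bfe_1$ plus an $O(|x|^{-2})$ correction, so far particles are left behind at relative speed $\approx1$. Their distance from the core thus grows linearly, and hence the interaction
\[
\int G(\omg_{Lamb}(\cdot-p_1))\,\omg^-\lesssim\int \frac{x_2\,|\omg^-|}{|x-p_1\bfe_1|^2}
\]
decays. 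The decay assumption \eqref{assump_moment} is used here to make the initial mass outside the unit ball, weighted by $|x|$, small. The impulse defect is what matters. A particle of $\omg^-$ at height $x_2$ rises only due to the vertical velocity $u_2$, which is generated by $\omg^+$ near the core and decays like $|x-p_1|^{-2}$ (the dipole kernel). Its vertical displacement is therefore $\int_0^\infty (1+s)^{-2}ds<\infty$ once the particle has left the core. So $\int x_2\omg^-$ stays $O(\dlt^{1/2}\log)$ rather than growing linearly. Particles in piece (i) exit the core in time $O(1)$ because the core flow in the comoving frame is a closed-streamline region of $\omg_{Lamb}$. Only a small-mass part is trapped there, and it stays at bounded height.

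\textbf{Step 3 (closing).} With these bounds the hypotheses of the compactness statement hold for $\omg^+(t)$ uniformly in $t$, and this gives \eqref{eq:mainthm-conclusion}. The bound in $L^1_*$ is only on $\{|x_1|\le C\varep^{-1}\}$ because piece (iii) may carry $x_2$-weight far away horizontally.

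\textbf{Main obstacle.} The hardest part is Step 2 for vorticity drifting vertically. The kernel is slowly decaying and singular, and the self-interaction of $\omg^-$ plus the interaction with the perturbation $\omg^+-\omg_{Lamb}$ also produce vertical velocity. We would first try to bound these using $\nrm{\omg^-}_{L^1\cap L^\infty}$ and the $L^2$ smallness, with the estimate $|u|\lesssim\nrm{\omg}_{L^1}^{1/2}\nrm{\omg}_{L^\infty}^{1/2}$ applied to the small pieces. This reduces the vertical drift to a time-integrable bound inside the bootstrap.
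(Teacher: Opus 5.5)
\textbf{There is a genuine gap.} The problem is your reduction to the \emph{global} positive part. In the opening paragraph and in Step 3 you apply the sign-condition compactness to $\omega^{+}(t)$. This needs $\int x_2\,\omega^{-}(t)\,dx$ to stay uniformly small, since $\mu[\omega^{+}(t)]=\mu[\omega_0]+\int x_2\omega^{-}(t)$.

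That quantity is not small in general: it can grow linearly in time. The paper's own heuristic example is a tiny self-propelling vortex pair, i.e. a very small dipole travelling vertically. Once the core leaves it behind, both halves rise at a small fixed speed. $\mu[\omega]$ is unaffected because the two contributions cancel, but $\int x_2\omega^{-}$ and $\mu[\omega^{+}]$ both grow like $t$.

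Your argument is also internally inconsistent. The uniform bound you need in Step 3 would give $L^1_*$ closeness on all of $\mathbb{R}^2_+$. The theorem deliberately does not claim this, and the paper expects it to be false.

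The faulty step is in Step 2: you assume $u_2$ at a negative particle comes from the core. The self-induced velocity of the perturbation is only controlled by bounds of the type $\|\omega\|_{L^1}^{1/2}\|\omega\|_{L^\infty}^{1/2}$, which you defer to the ``main obstacle''. Such a bound is small but not integrable in time, so it permits unbounded rise on $[0,\infty)$.

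Two further problems:
\begin{itemize}
\item The core of $\omega_{Lamb}$ has closed streamlines in the comoving frame. It therefore traps particles rather than expelling them in time $O(1)$.
\item Cutting off to $|x_1-p_1(t)|\le C\varepsilon^{-1}$ afterwards does not help. The enstrophy, weighted norm and energy of a localized piece are not conserved, and you give no mechanism to control how they change.
\end{itemize}

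\textbf{What is missing is the localized bootstrap.} The paper applies a sign-free compactness statement (Proposition \ref{thm:AC2019}, with $\|\cdot\|_{L^1_*}$ in place of $\mu$ in the admissible class). It applies it to $\rho_Q$, the moving-frame solution restricted to $Q=\{x_1>-D\}$ with $D=C_0\lambda^{-1/2}$. This distance is chosen so that two things balance:
\begin{itemize}
\item $D$ is large enough that interactions with $\tilde\rho$ integrate in time to $O(\lambda)$;
\item $D$ is small enough that $L^1_*$ cannot accumulate inside $Q$.
\end{itemize}
The variations of $\kappa_Q$, $\mu_Q$ and $E_Q$ are tracked through fluxes across $\{x_1=-D\}$, which have a favorable sign since $v_1\le -1/2$ there. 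The remaining non-local terms are integrated along trajectories of gained particles.

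The decisive new input is a \emph{one-sided} bound. Behind the core the positive main part pushes fluid down: $v^{main,+}_{Q,2}\le 0$ for $x_1<-R$. Hence $v_2$ is bounded \emph{from above}, not in absolute value, by $\lambda^{1/6}$, with much better bounds in $Q_I$ and for $x_2\ge 4H$.

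Particles outside $S_R$ move left at relative speed at least $1/2$, so they spend time at most $2D$ in $Q$ behind the core. This yields:
\begin{itemize}
\item height at most $H$ for particles that visited $S_R$ (Lemma \ref{lem:S_R_to_H});
\item height at most $2|z|+6H$ for all other particles (Lemma \ref{lem:far_cont}).
\end{itemize}
Only then do $\|\omega_0^{-}\|_{L^1}\lesssim\delta$ and the moment assumption bound the $L^1_*$ mass in $Q$ of negative and far vorticity by $O(H\delta)$. Vertical escape is thus not prevented. It is made harmless, because rising vorticity necessarily leaves $Q$. Your proposal has no counterpart to this mechanism.
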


	Theorem \ref{thm:main} seems to be the first stability result of traveling waves to the incompressible Euler equations in $\bbR^2_+$ which does not require the sign condition on the vorticity. The strategy developed in this paper is robust, and we expect that the result can be extended to a more general family of vortex dipoles and rings. Furthermore, based on our approach one may study the interaction problem of multiple vortex dipoles with \textit{mixed} signs; see \S \ref{subsec:pa} for details. 
	
	Compared to the Lamb dipole stability \textit{with} sign condition $\omg_{0} \ge 0$, Theorem \ref{thm:main} requires several additional assumptions, whose necessity is discussed in \S \ref{subsec:discuss} after precisely stating the stability result with sign condition from \cite{AC2019,ACJ}.
	
	The proof is based on combining a variational principle (the Lamb dipole is the kinetic energy maximizer under appropriate constraints on vorticity) with \textit{Lagrangian bootstrapping schemes}, where we study the fluid particle trajectories to quantify smallness of interactions among various parts of the solution. This framework was developed in the recent work \cite{AJY}, which obtained the stability of linear superpositions of sufficiently separated Lamb dipoles under the sign condition. In the present work, we face all the same difficulties in \cite{AJY} but several more, due to absence of the sign assumption. These technical challenges are discussed in detail in \S \ref{subsec:ideas}. To demonstrate that our framework is not limited to the study of Lamb dipoles, several applications will be presented in \S \ref{subsec:pa}. We then discuss existing stability results for various traveling waves and other equilibria for the two-dimensional vorticity equation in \S \ref{subsec:refs}. Furthermore, we present several remaining open problems. 
	
	
	
	We close this section with a simple remark. 
	
	\begin{remark}[Stability for rescaled Lamb dipoles]
		While Theorem \ref{thm:main} is stated for the normalized Lamb dipole for simplicity, a similar statement holds (by a suitable rescaling of $C$ and $\dlt$) for the whole two-parameter family of Lamb dipoles, denoted by $\omg_{Lamb}^{\kpp,\mu}$: 
		\begin{equation}\label{eq:lamb-rescale-def}
			\omega_{Lamb}^{\kpp,\mu}(x) := \left(\frac{\kappa^{3}}{ \sqrt{\pi} c_{L}^{3} \mu}\right)^{1/2} \, {\omega}_{Lamb}\left( \left( \frac{c_{L}\mu}{\sqrt{\pi}\kpp} \right)^{-1/2} x\right) .
		\end{equation}
		The rescaling is defined so that we have\footnote{With this notation,  $\omg_{Lamb} = \omg_{Lamb}^{c_{L}\sqrt{\pi}, \pi}$.} $\nrm{\omega_{Lamb}^{\kpp,\mu}}_{L^2(\ohp)} = \kpp$ and $\nrm{\omega_{Lamb}^{\kpp,\mu}}_{L^1_*(\ohp)} = \mu$.
	\end{remark}

	\subsection{Comparison with Lamb dipole stability under sign condition }\label{subsec:discuss} In this section, we first recall the Lamb dipole stability statement under sign condition (\cite{AC2019,ACJ}). 
	Given a function $\rho$ on $\bbR^2_+$, we write $\kpp[\rho] := \nrm{\rho}_{L^2(\bbR^2_+)}$ and recall that the kinetic energy is given by \begin{equation*}
		\begin{split}
			E[\rho] := \frac1{8\pi} \iint_{\bbR^2_+ \times \bbR^2_+} \log\left( 1 + \frac{4x_2y_2}{|x-y|^2} \right) \rho(x)\rho(y) dxdy = \frac12 \int_{\bbR^2_+} \rho(x) (-\lap_{\bbR^2_+})^{-1}\rho (x) dx. 
		\end{split}
	\end{equation*} All the $L^p$ norms and $E[\rho]$ are preserved in time for Yudovich solutions of \eqref{eq:2D-Euler}. 
	
	The fundamental property of the Lamb dipole is that it is the unique energy maximizer under constraints on ${L^1_*}$ and $L^{2}$: 
	\begin{proposition}[{{\cite{Burton05b}, \cite[Corollary 2.5]{AJY}, and \cite[Theorem 1.4]{ACJ}}}]\label{prop:en_eq}
		For any  $\rho\in (L^2\cap L^1_{*})(\mathbb{R}^2_+)$, we have the sharp energy inequality
		\begin{align} \label{eq: SEI}
			E[\rho]
			\leq C_L \nrm{\rho}_{L^{2}(\mathbb{R}^{2}_{+})}\nrm{\rho}_{L^{1}_{*}(\mathbb{R}^{2}_{+})}, \quad \mbox{where} \quad C_L:=(c_L\sqrt{\pi})^{-1} \simeq 0.1502.
		\end{align}    
		If $\rho$ is nonzero, the equality is achieved only when either $\rho$ or $-\rho$ is equal to some $x_{1}$-translate of the two-parameter family of Lamb dipoles defined in \eqref{eq:lamb-rescale-def}.
		
		Under $\omg \ge 0$, we have $\nrm{\omg}_{L^1_*} = \mu[\omg]$, and the above implies that in the admissible class \begin{equation*}
			\begin{split}
				\calA_{+} := \left\{  \omg \ge 0 \, : \, \kpp[\omg] \le  \kpp[{\omg}_{Lamb}], \, \mu[\omg] \le  \mu[{\omg}_{Lamb}] \right\},
			\end{split}
		\end{equation*} the supremum of $E$ is achieved precisely for $x_{1}$-translations of $\omg_{Lamb}$.  
	\end{proposition}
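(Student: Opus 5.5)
Proposition~\ref{prop:en_eq} is cited from \cite{Burton05b,AJY,ACJ}. A self-contained route would go as follows. First reduce to nonnegative functions: the Green kernel $G(x,y)=\frac1{4\pi}\log(1+\frac{4x_2y_2}{|x-y|^2})$ is positive, so $E[\rho]\le E[|\rho|]$, while $\nrm{\rho}_{L^2}=\nrm{|\rho|}_{L^2}$ and $\nrm{\rho}_{L^1_*}=\nrm{|\rho|}_{L^1_*}$. Equality $E[\rho]=E[|\rho|]$ for $\rho\neq0$ forces $\rho_+\rho_-$-interactions to vanish, i.e.\ $\iint G\rho_+\rho_-=0$; since $G>0$ everywhere, one of $\rho_\pm$ must vanish. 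This yields the ``$\rho$ or $-\rho$'' alternative. Next, use scaling. Under $\rho\mapsto a\rho(b\,\cdot)$ we have $E\mapsto a^2b^{-4}E$, $\nrm{\cdot}_{L^2}\mapsto ab^{-1}$, and $\nrm{\cdot}_{L^1_*}\mapsto ab^{-3}$. Both sides of \eqref{eq: SEI} therefore scale identically. It suffices to show that $\sup\{E[\omg]:\omg\ge0,\ \kpp[\omg]\le\kpp_0,\ \mu[\omg]\le\mu_0\}$ equals $C_L\kpp_0\mu_0$ and is attained exactly by translates of the Lamb dipole with those parameters. The admissible-class statement for $\calA_+$ then follows immediately.

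For existence of a maximizer, I would apply concentration-compactness to a maximizing sequence in the class above, modulo $x_1$-translations. The $L^1_*$ constraint controls mass near infinity in $x_2$. Decay of $G$ rules out vanishing (energy tends to $0$) and dichotomy (energy is strictly superadditive because the sup is homogeneous of degree $2$ in the constraints, since the sup of $E$ scales like $\kpp\mu$). Weak $L^2$ limits preserve the constraints, and the energy converges by compactness of the kernel on tight sequences. For the Euler–Lagrange equation, a Burton-type rearrangement argument shows that a maximizer has the form $\omg=\lambda(\psi-Wx_2-\gamma)_+$, where $\psi=(-\lap)^{-1}\omg$, and that both constraints are saturated. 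Then $\Psi=\psi-Wx_2-\gamma$ solves $-\lap\Psi=\lambda\Psi_+$ with $\Psi=-\gamma$ on $x_2=0$ and $\Psi\sim -Wx_2$ at infinity. Pohozaev/Kelvin-impulse identities (multiplying by $x\cdot\nb\Psi$ and by $x_2$) give $\gamma=0$ and relations among $E$, $\kpp$, $\mu$.

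The main obstacle is uniqueness: showing that every solution of $-\lap\Psi=\lambda\Psi_+$ in $\ohp$ with $\Psi=0$ on the boundary and $\Psi+Wx_2\to0$ is the Lamb profile. I would first prove, by the moving plane method or Steiner symmetrization of the maximizer, that the positive set $\{\Psi>0\}$ is $x_1$-symmetric and connected. I would then reflect oddly to $\bbR^2$, so that $\Psi$ solves $-\lap\Psi=\lambda\,\mathrm{sgn}(x_2)|\Psi|\mathbf 1_{\{\Psi x_2>0\}}$. Finally, the plan is to establish that the vortex core is a disc, following Burton's argument, via a Fourier--Bessel expansion about the disc and a free boundary matching condition $J_1(c_L)=0$. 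Once $\Psi$ is identified, evaluating $E$, $\kpp$, $\mu$ on \eqref{eq:Lamb-def} gives the constant $C_L=(c_L\sqrt\pi)^{-1}$.
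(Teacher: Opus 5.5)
The paper gives no proof of this proposition; it imports it from \cite{Burton05b,AJY,ACJ}. Your first reductions are sound. Since $\bfG>0$, one has $E[|\rho|]-E[\rho]=4E_{inter}[\rho^{(+)},\rho^{(-)}]\ge 0$, with equality only if $\rho^{(+)}\equiv0$ or $\rho^{(-)}\equiv0$. This is the same splitting the paper uses in proving Proposition \ref{thm:AC2019}. The two-parameter scaling $\rho\mapsto a\rho(b\,\cdot)$ then correctly reduces \eqref{eq: SEI} to a constrained maximization in which both constraints are saturated. The existence of a maximizer, via concentration--compactness on $x_2\omg_n$ modulo $x_1$-translations, is standard.

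The gap is the last step, which you flag as the main obstacle but do not supply.
\begin{itemize}
\item A Fourier--Bessel expansion about a disc presupposes that the free boundary $\partial\{\Psi>0\}$ is a circle. It reproduces Lamb's explicit solution and the matching condition $J_1(c_L)=0$, so it shows that the Lamb dipole \emph{is} a solution. It contains no mechanism excluding non-circular cores.
\item Steiner symmetrization or moving planes give $x_1$-symmetry, but neither connectedness nor circularity of the core.
\item This step carries the inequality itself, not only the equality case. At a maximizer, with your multipliers and $\gamma=0$, the Euler--Lagrange equation tested against $\omg$ and the Pohozaev identity give only $E=W\mu$ and $\kpp^2=\lambda W\mu$. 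Hence $E/(\kpp\mu)=\sqrt{W/(\lambda\mu)}$, a scale-invariant number. It equals $(c_L\sqrt\pi)^{-1}=C_L$ for the Lamb dipole ($W=1$, $\lambda=c_L^2$, $\mu=\pi$), but it is unknown until the maximizer has been identified.
\end{itemize}
What is missing is a genuine uniqueness theorem for $-\lap\Psi=\lambda\Psi_+$ in $\ohp$ with $\Psi=0$ on $\{x_2=0\}$ and $\Psi+Wx_2\to0$. This is the role of Burton's theorem \cite{Burton96}, on which the identification in the cited works rests. You would also need to check that the maximizer satisfies its hypotheses: regularity, $W>0$, and decay or boundedness of the core.

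A smaller point concerns the flux constant $\gamma$. With only the $L^2$ and impulse constraints (no mass constraint, no rearrangement class), the KKT conditions give $\omg=\lambda(\psi-Wx_2)_+$ directly, so no $\gamma$ appears. Had one appeared, the identities you list could not remove it. They read $\kpp^2=\lambda W\mu$ and $2E=2W\mu+\gamma\nrm{\omg}_{L^1}$, which are compatible with $\gamma>0$.
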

	
	Together with a compactness statement which gives convergence of any energy maximizing sequences in the admissible class $\calA_{+}$, one obtains nonlinear orbital stability  \cite{AC2019,ACJ}.	 
	\begin{theorem}[\cite{ACJ}] \label{thm:Lamb-sign} For any $\varep>0$, there exists $\dlt>0$ such that the following holds: \begin{equation}\label{cond_sign}
			\mbox{for any}\quad \omg_{0} \ge 0\quad\mbox{in}\quad\ohp
		\end{equation} satisfying \begin{equation*}
			\begin{split}
				\nrm{\omg_{0} -  {\omg}_{Lamb}}_{L^{2} } + \left|\int_{\bbR^2_+} x_{2}\left( \omg_{0} - \omg_{Lamb} \right) dx \right| < \dlt,
			\end{split}
		\end{equation*} there exists a solution $\omg \in L^{\infty}(\bbR; (L^{2} \cap L^{1}_{*})(\ohp))$ corresponding to the initial data $\omg_{0}$ such that 
		\begin{equation}\label{eq:mainthm-conclusion-sign}
			\begin{split}
				\nrm{\omg(t,\cdot + \tau(t) \bfe_{1} ) -  {\omg}_{Lamb}( \cdot )}_{(L^{2} \cap L^{1}_{*})(\ohp)} < \varep \qquad \mbox{for all} \qquad t \in \mathbb{R} 
			\end{split}
		\end{equation} holds 
		for some shift function $\tau:\mathbb{R}\to\mathbb{R}$.
	\end{theorem}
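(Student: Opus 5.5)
The plan is to prove Theorem \ref{thm:Lamb-sign} by the concentration-compactness variational argument, with Proposition \ref{prop:en_eq} as the identification of maximizers. We argue by contradiction. Suppose there are $\varep_0>0$, data $\omg_{0}^n \ge 0$ with $\nrm{\omg_0^n-\omg_{Lamb}}_{L^2}+|\mu[\omg_0^n]-\mu[\omg_{Lamb}]|\to 0$, and times $t_n$ such that for the corresponding solutions
\[
\inf_{\tau\in\bbR}\nrm{\omg^n(t_n,\cdot+\tau\bfe_1)-\omg_{Lamb}}_{(L^2\cap L^1_*)(\ohp)}\ge\varep_0 .
\]
We first construct global weak solutions preserving $\omg\ge 0$, $\kpp[\omg]$, $\mu[\omg]$ and $E[\omg]$; for $\omg_0\in L^2\cap L^1_*$ only, we obtain them by approximating with bounded compactly supported data. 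Since $\omg\ge0$, the quantity $\mu$ equals the $L^1_*$ norm, so conservation of $\mu$ gives uniform $L^1_*$ control. By continuity of $E$ on $L^2\cap L^1_*$, which follows from the kernel bound $\log(1+4x_2y_2/|x-y|^2)\lesssim$ interpolation estimates, the sequence $\rho_n:=\omg^n(t_n)$ satisfies
\[
E[\rho_n]=E[\omg_0^n]\to E[\omg_{Lamb}],\qquad \kpp[\rho_n]\to\kpp[\omg_{Lamb}],\qquad \mu[\rho_n]\to\mu[\omg_{Lamb}] .
\]
After a harmless rescaling, $(\rho_n)$ is therefore an energy maximizing sequence in $\calA_+$.

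The core step is compactness up to $x_1$-translation: every maximizing sequence in $\calA_+$ has, after translations $\tau_n$, a subsequence converging in $L^2\cap L^1_*$ to a maximizer. We run Lions' concentration compactness on the measures $x_2\rho_n\,dx$.

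\emph{Vanishing.} If these measures vanish, we split the kernel into near and far parts. Using $\log(1+4x_2y_2/|x-y|^2)\le 4x_2y_2/|x-y|^2$ on the far part, and $L^2$ control together with small local $L^1_*$ mass on the near part, we get $E[\rho_n]\to0$, a contradiction. We must also exclude mass escaping to $x_2\to\infty$ or concentrating at $x_2\to0$. Since $E[\rho]$ is bounded by $\nrm{\rho}_{L^2}\nrm{\rho}_{L^1_*}$, weighted by how the mass is distributed, pieces at small $x_2$ carry little $L^1_*$ mass, and pieces at large $x_2$ contribute little energy per unit impulse unless they have large $L^2$ norm.

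\emph{Dichotomy.} If the mass splits into two far-separated pieces, we use that the interaction energy tends to zero and apply the sharp inequality \eqref{eq: SEI} to each piece. Write $\kpp_i,\mu_i$ for the pieces, with $\kpp_1^2+\kpp_2^2\le\kpp^2$ and $\mu_1+\mu_2=\mu$. Then
\[
E\le C_L(\kpp_1\mu_1+\kpp_2\mu_2)<C_L\kpp\mu
\]
whenever both pieces are nontrivial, because $\kpp_i<\kpp$. This is strict subadditivity, which contradicts maximality.

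Hence compactness holds. Weak $L^2$ convergence plus tightness in $L^1_*$ gives $E[\rho]=\lim E[\rho_n]$ for the limit $\rho$. By lower semicontinuity and the maximality/equality case of Proposition \ref{prop:en_eq}, $\rho$ must be a translate of a Lamb dipole with parameters satisfying $\kpp[\rho]\le\kpp$, $\mu[\rho]\le\mu$. Equality in \eqref{eq: SEI} then forces $\kpp[\rho]=\kpp[\omg_{Lamb}]$ and $\mu[\rho]=\mu[\omg_{Lamb}]$, so $\rho=\omg_{Lamb}(\cdot-\tau\bfe_1)$. Norm convergence in $L^2$ follows from weak convergence with convergence of norms. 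Convergence in $L^1_*$ follows from tightness together with convergence of $\mu$ for nonnegative functions. This contradicts the lower bound $\varep_0$ and proves the theorem.

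The main obstacle is ruling out loss of compactness in the vertical direction: mass drifting to $x_2\to\infty$ or collapsing to the boundary, where translation invariance is absent but the kernel degenerates. For this we would use the explicit kernel estimates, namely the bound by $\min\{x_2y_2/|x-y|^2,\ \log\}$ plus $L^2$ control. We would combine them with the scaling structure $E\le C_L\kpp\mu$ applied to localized pieces, which shows that pieces at extreme heights have strictly suboptimal energy.
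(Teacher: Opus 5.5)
Your proposal is correct in outline and follows the route the paper indicates for this cited result. That route combines the variational characterization in Proposition \ref{prop:en_eq} (maximizers in $\calA_{+}$ are exactly the $x_1$-translates of $\omg_{Lamb}$), compactness of energy-maximizing sequences up to $x_1$-translation, and conservation of $\kpp$, $\mu$, $E$, closed by contradiction. The two steps you only sketch are the technical ones in \cite{ACJ}. Ruling out vertical escape needs a height-dependent bound, e.g.\ $E[\rho]\to 0$ for $\rho\ge 0$ supported in $\{x_2\ge h\}$ with bounded $\kpp,\mu$ (using $\int\rho\le \mu/h$ there), since the scale-invariant $E\le C_L\kpp\mu$ cannot see height. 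The weak solution is more safely obtained by proving the estimate uniformly for Yudovich approximations and then passing to the limit, rather than by asserting exact conservation of $\mu$ and $E$ for a limit solution.
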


	We are now in a position to explain the differences between the stability statements in Theorem \ref{thm:main} and Theorem \ref{thm:Lamb-sign}: 
	
	\smallskip 
	
	\noindent \textbf{Closeness in $O(\varep^{-1})$ strip}. Our conclusion  \eqref{eq:mainthm-conclusion} gives $L^2\cap L^1$ closeness of the solution (after a shift) to the Lamb dipole globally in space, but the $L^1_*$ closeness is given only in a strip of width $O(\varep^{-1})$, unlike \eqref{eq:mainthm-conclusion-sign}. We believe that \eqref{eq:mainthm-conclusion-sign} is simply \textbf{false} in our setting, without the sign condition \eqref{cond_sign}: Imagine a perturbation of $\omg_{Lamb}$ which is given by a very small Lamb dipole traveling in the \textit{vertical} direction (cf. Figure \ref{fig:difficulty}). Assuming that the interaction between this perturbation and the main Lamb dipole is negligible, the $L^{1}_*$ norm of this part diverges linearly in time, not contradicting any of the conservation laws. This growth of $L^{1}_*$ norm is the most serious difficulty in this work. By the same reasoning, we do not expect a global-in-time control of 
	$$ \int_{\{x\in\mathbb{R}^2_+\,:\,|x|>1\}}|x||\omg(t,\cdot+p_1(t)\bfe_{1})|dx$$ even though this quantity is small at the initial time \eqref{assump_moment}. 
	
	\smallskip 
	
	\noindent \textbf{Moment smallness condition}. Regarding the assumptions on the data, the main difference is that while the sign condition \eqref{cond_sign} in Theorem \ref{thm:Lamb-sign} is removed, we introduce a moment smallness condition \eqref{assump_moment}. The necessity of \eqref{assump_moment} comes from dealing with interaction of the ``Lamb dipole part'' with \textit{opposite-signed} dipoles coming from infinity, see Figure \ref{fig:difficulty}. Even if such dipoles are altogether small in $L^{p}$ for all $p$, depending on their initial locations, they can interact adversely for a long time  with the Lamb dipole part: a perturbation initially distance $D$ away from the Lamb dipole can interact with the Lamb dipole for a timescale of order $O(D)$. In this regard, \eqref{assump_moment} ensures that its size is bounded by $o(D^{-1})$, balancing the interaction timescale. 
	
	\smallskip 
	
	\noindent \textbf{Dependence of $\dlt$ in the $L^{\infty}$ of vorticity}. Unlike Theorem \ref{thm:Lamb-sign}, in our result $\delta$ depends not just on $\varepsilon$ but also on $\nrm{\omg_{0}}_{L^\infty(\ohp)}$. This is because our proof of stability is not just ``kinematic'' (consequence of frozen-in-time estimates) but depends crucially on dynamics of fluid particles; we need a bound on the maximum velocity, which comes from having a bound on $\nrm{\omg_{0}}_{L^\infty(\ohp)}$.\footnote{To this end, any $L^{p}$ norm with $p>2$ can do the same job.}

	\begin{figure}
		\centering
		\includegraphics[width=0.8\linewidth]{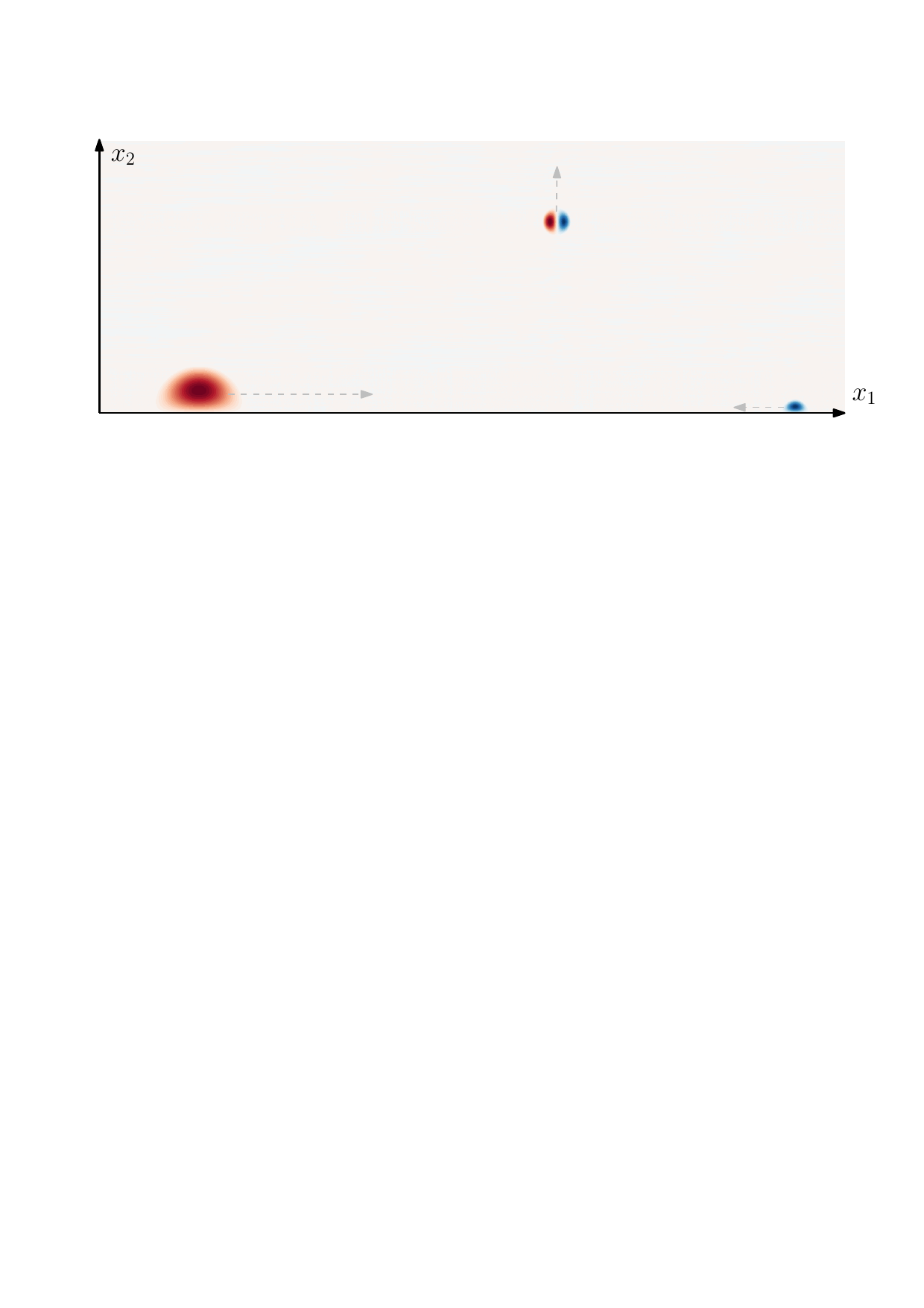}
		\caption{Challenges in proving Lamb dipole stability without sign condition. The positive and negative vorticity regions are represented by red and blue colors, respectively. The propagation velocities (assuming small interaction) are depicted in gray dashed lines.}
		\label{fig:difficulty}
	\end{figure}

	\subsection{Proof strategy}\label{subsec:ideas}
	
	At a high level, the idea in the proof of Theorem \ref{thm:main} is to apply the variational principle not to the whole solution $\omg(t)$ but rather to some \textit{part} of the solution, namely $\omg(t)$ restricted to some subset of $\bbR^2_+$. The immediate difficulty one faces is that all the key quantities (impulse, enstrophy, energy) change in time, due to interactions with the rest of the solution. 
	
	A framework introduced in \cite{AJY} allows one to quantify smallness such interactions, by setting up a number of bootstrap hypotheses which are closed by tracking the space-time location of fluid particles. To explain this framework, and also to expand on \textit{additional} challenges arising in this work, let us recall the statement of multiple Lamb dipole stability, in the simplest case of two Lamb dipoles \cite{AJY} (with modified notation for consistency). 
	
	\begin{theorem}\label{thm:Lamb-dipoles}
		Fix $A>0$ and ${\kpp_{r}, \kpp_{\ell}},{\mu_{r}},\mu_{\ell} > 0$, with $ \kpp_{r} >  \kpp_{\ell}.$  Then, for any $\varepsilon>0$ , there exist $\dlt > 0, D_{0} \ge \dlt^{-1}$ depending on $\varepsilon, A, {\kpp_{r}, \kpp_{\ell}},{\mu_{r}},\mu_{\ell}$ such that as long as $\bar{p}_{r}  > \bar{p}_{\ell}  + D_{0}$,  we have the following stability result. For any nonnegative initial data $\omg_{0} \ge 0$ on $\bbR^{2}_{+}$ with \begin{equation}\label{eq:ini-assumption}
			\begin{split}
				\left\Vert \omg_{0} -  \left( \omega_{Lamb}^{\kpp_{r},\mu_{r}}(\cdot - \bar{p}_{r} \bfe_{1} ) + \omega_{Lamb}^{\kpp_{\ell},\mu_{\ell}}(\cdot - \bar{p}_{\ell} \bfe_{1} ) \right) \right\Vert_{L^{2} \cap L^1_*} \le \dlt, \quad \nrm{\omg_{0}}_{L^{1} \cap L^{\infty}},\, |\supp(\omega_0)|\le A, 
			\end{split}
		\end{equation} the corresponding solution $\omg(t,\cdot)$ to $\omg_{0}$ satisfies \begin{equation}\label{eq:varep-close}
			\begin{split}
				\left\Vert \omg(t,\cdot) -  \left( \omega_{Lamb}^{\kpp_{r},\mu_{r}}(\cdot - {p}_{r}(t) \bfe_{1} ) + \omega_{Lamb}^{\kpp_{\ell},\mu_{\ell}}(\cdot -  {p}_{\ell}(t) \bfe_{1} ) \right)  \right\Vert_{L^{2} \cap L^1_*  } \le \varepsilon \qquad\mbox{for all} \qquad t > 0 
			\end{split}
		\end{equation} for some $ (p_{r}(t),p_{\ell}(t))$ satisfying with some  $C = C(A, {\kpp_{r}, \kpp_{\ell}},{\mu_{r}},\mu_{\ell})>0$ \begin{equation*}
			\begin{split}
				| p_{i}(t) - \bar{p}_{i} - C_{L}\kpp_{i}\, t| \le C\varep^{1/2}(1+t), \qquad i \in \left\{ r, \ell \right\}.
			\end{split}
		\end{equation*} 
	\end{theorem}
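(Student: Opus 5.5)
The plan is to run a Lagrangian bootstrap on a decomposition of $\omg(t)$ into two pieces near the dipoles and a remainder, and at each time apply the variational principle of Proposition~\ref{prop:en_eq} to each piece separately. Because $\omg_0\ge0$, the $L^1_*$ norm of any piece equals its impulse. The impulse and enstrophy of each piece are conserved up to interaction errors. Choose a cutoff point $m(t)$ midway between the (tentative) centers $p_r(t)$ and $p_\ell(t)$. Set $\omg_r=\omg\mathbf 1_{\{x_1>m(t)\}}$ and $\omg_\ell=\omg\mathbf 1_{\{x_1\le m(t)\}}$, or more robustly use Lagrangian labels: $\omg_r(t)$ is the vorticity transported from the particles initially in $\{x_1>\bar m\}$. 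The Lagrangian choice makes each piece's $L^2$ norm exactly conserved, and also its $L^1$ norm and distribution function, since each piece is transported by the full incompressible flow.

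\textbf{Bootstrap hypotheses.} On a maximal interval $[0,T^*)$, assume the following.
\begin{enumerate}
\item[(H1)] Each piece is $\varepsilon$-close in $L^2\cap L^1_*$ to a translate of its Lamb dipole.
\item[(H2)] Each piece's support, apart from a tail of $L^1_*$-mass $\le \varepsilon$, lies in a ball of radius $R$ around its center.
\item[(H3)] The centers separate linearly: $p_r-p_\ell\ge D_0/2+c(\kpp_r-\kpp_\ell)t$. This holds because $C_L\kpp_r>C_L\kpp_\ell$.
\end{enumerate}

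\textbf{Interaction estimates.}
\begin{itemize}
\item Differentiate the impulse $\mu[\omg_r](t)=\int x_2\omg_r$. Since $\omg_r$ is transported, $\frac{d}{dt}\mu[\omg_r]=\int u_2[\omg_\ell]\,\omg_r$, because the self-induced contribution $\int u_2[\omg_r]\omg_r$ vanishes by symmetry of the kernel.
\item The half-plane kernel of $u_2$ generated by a dipole decays like $|x-y|^{-2}$ (dipole-type decay). Under (H2)–(H3), $|\frac{d}{dt}\mu[\omg_r]|\lesssim (D_0+ct)^{-2}$ plus tail contributions.
\item Integrating in time gives an $O(D_0^{-1})$ total change.
\item The same argument controls the energy. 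Write $E[\omg]=E[\omg_r]+E[\omg_\ell]+2E(\omg_r,\omg_\ell)$. The cross term is $\lesssim D_0^{-2}$, since the Green function $\log(1+4x_2y_2/|x-y|^2)\sim x_2y_2|x-y|^{-2}$. Hence each $E[\omg_i](t)$ is nearly conserved. To see this, compute $\frac{d}{dt}E[\omg_r]=-\int\omg_r\, u[\omg_\ell]\cdot\nabla\psi[\omg_r]$, which again has an integrable-in-time bound.
\end{itemize}

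\textbf{Closing (H1) and (H2).}
\begin{itemize}
\item With near-conservation of $\kpp,\mu,E$ for each piece, the compactness of maximizing sequences in $\calA_+$ (the ingredient behind Theorem~\ref{thm:Lamb-sign}) yields (H1) with $\varepsilon/2$, for $\delta$ small and $D_0$ large.
\item (H3) follows because the translation $p_i(t)$ can be chosen Lipschitz with speed close to $C_L\kpp_i$. This uses the fact that the velocity field is close to the Lamb dipole's traveling velocity, bounded through $\|\omg\|_{L^\infty}\le A$.
\item The tail part of (H2) is closed by tracking particle trajectories:
  \begin{itemize}
  \item Particles of $\omg_r$ far from $p_r$ feel a velocity close to the outer flow of the dipole, and this outer flow does not overtake the dipole.
  \item Energy–impulse closeness forces most of the mass into the dipole core.
  \item The assumption $|\supp\omg_0|\le A$ bounds how much mass can leave the core.
  \end{itemize}
\end{itemize}

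\textbf{Main obstacle.} I expect the hardest step to be the tail control in (H2): ensuring that small amounts of $\omg_\ell$ vorticity do not migrate into the region of $\omg_r$, or vice versa, over infinite time. The interaction bounds degrade if a piece's support spreads. I would first try to show that, in the frame moving with $p_r$, the velocity induced at distance $\gtrsim R$ from the core is dominated by the leftward relative drift $-(C_L\kpp_r-C_L\kpp_\ell)$. This would make the Lagrangian region labeled $\ell$ remain behind $m(t)$ up to an $L^1_*$-small set, and the argument would be closed by a continuity argument in $T^*$.
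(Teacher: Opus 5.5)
The paper does not reprove this theorem; it quotes it from \cite{AJY} and only sketches that proof. The sketch shows your argument has a genuine gap exactly at ``plus tail contributions''.

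\textbf{Interaction step.} The bound $|\frac{d}{dt}\mu[\omg_r]|\lesssim (D_0+ct)^{-2}$ needs support confinement, and this fails generically: the exterior flow of the front dipole flushes fluid backward into slowly moving filaments. Your Lagrangian labelling makes this worse, not more robust. The $r$-labelled filaments are nearly at rest in the lab frame, so the left dipole (speed $C_L\kpp_\ell>0$) overtakes them. From then on, $\int u_2[\omg_\ell]\,\omg_r$ and $\int \omg_r\, u[\omg_\ell]\cdot\nb\psi[\omg_r]$ receive contributions proportional to the tail mass with no decay in $t$, possibly for arbitrarily long times. A tail of size $\varepsilon$, as in (H2), therefore gives no uniform-in-time bound. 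It certainly gives nothing below the threshold $\lambda_1(\varepsilon)\ll\varepsilon$ needed to re-apply compactness.

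The missing idea is the one in \cite{AJY}: cut at the Eulerian midpoint $L(t)$. A filament that crosses then belongs to $\omg_\ell$, and its interaction with the left dipole is internal and harmless. With this cut, $\kpp_r$ can only decrease and the flux only lowers $\mu_r$. The one dangerous term is the non-local lift-up, which antisymmetry of $\bfK_2$ rewrites as the velocity of $\omg_r$ integrated against $\omg_\ell$. One then passes to Lagrangian coordinates on the \emph{gained} particles, those that crossed $L(t)$. Each recedes from the borderline after its crossing time, so its interaction is integrable along its own trajectory. Fubini then gives an $O(\dlt)$ total, using that the gained mass is small: enstrophy bookkeeping controls its $L^2$ norm, and $|\supp\omg_0|\le A$ passes from $L^2$ to $L^1$. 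Lemma \ref{lem:F-mu} runs the same device in the present setting.

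Your stated main obstacle also points the wrong way. Forward migration of $\ell$-vorticity is prevented by the left dipole's exterior flow. Backward filamentation from $\omg_r$ cannot be prevented; what must be shown is that its time-integrated interaction is small.

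\textbf{Energy step.} The same non-decaying tail terms spoil your claimed integrable bound on $\frac{d}{dt}E[\omg_r]$. More fundamentally, with the Eulerian cut, the flux through $L(t)$ lowers $E_r$, which is the wrong sign for the lower bound you need. Total energy conservation excludes a large transfer from $\omg_r$ to $\omg_\ell$ only when the front dipole has the larger radius. Moving impulse $a\ge 0$ and squared enstrophy $b\ge 0$ from right to left changes $\kpp_r\mu_r+\kpp_\ell\mu_\ell$, to first order, by $-a(\kpp_r-\kpp_\ell)-\frac b2(\mu_r/\kpp_r-\mu_\ell/\kpp_\ell)$. This is positive for low-lying filaments ($b\gg a$) whenever $\mu_r/\kpp_r<\mu_\ell/\kpp_\ell$, a regime the theorem allows.

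\cite{AJY} rules this out dynamically by ``energy trapping''. The energy flux through $L(t)$ carries the weight $\psi[\omg_r]$, which at the borderline is a small multiple of $x_2$, whereas the impulse flux carries $x_2$ itself. So under the bootstrap hypotheses, the energy lost by $\omg_r$ is a small multiple of its already controlled impulse loss. Without the gain-particle accounting and energy trapping, your bootstrap does not close.
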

	
	\smallskip \noindent \textit{Borderline and bootstrap hypotheses}. In the result above, the subscript $r$ and $\ell$ stands for right and left, respectively. Under the assumption that each Lamb dipole travels with its own velocity, which is $C_{L}$ times the $L^{2}$ norm, it is natural to take the borderline $L(t)$ to be the ``midpoint'' (Figure \ref{fig:dipoles}) \begin{equation*}
		\begin{split}
			L(t) = \frac{(\bar{p}_{r} + \bar{p}_{\ell}) + (C_L \kpp_{r} + C_{L} \kpp_{\ell})t}{2} 
		\end{split}
	\end{equation*} and decompose the solution by $\omg_{r}(t) := \omg(t) \mathbf{1}_{\{x_1> L(t)\}}$ and $\omg_{\ell}(t) = \omg(t) - \omg_{r}(t)$. Then, we define the quantities \begin{equation*}
		\begin{split}
			\kpp_{i}(t) = \kpp[ \omg_{i}(t) ], \quad \mu_{i}(t) = \mu[ \omg_{i}(t) ], \quad E_{i}(t) = E[ \omg_{i}(t) ]
		\end{split}
	\end{equation*} for $i \in \{ r, \ell \}$. The bootstrap hypotheses used in \cite{AJY} (modulo some simplifications) were: \begin{equation}\label{eq:bootstrap-AJY}
		\begin{split}
			|\kpp_{r}(t) - \kpp_{r}| \le C\dlt, \quad |\mu_{r}(t) - \mu_{r}| \le C\dlt, \quad |E_{r}(t) - C_{L}\kpp_{r}\mu_{r}| \le C\dlt, 
		\end{split}
	\end{equation} and similarly for the left part, for some $C>0$. We recall that $C_{L}\kpp_{r}\mu_{r} = E[ \omg_{Lamb}^{\kpp_r,\mu_r} ]$. These hypotheses, if proven, gives the stability \eqref{eq:varep-close} by the variational principle.

	\smallskip \noindent \textit{Quadratic decay of the Biot--Savart kernel}. The first observation to make is that under the {support confinement} hypothesis, \eqref{eq:bootstrap-AJY} can be closed: more precisely, assume that the supports of $\omg_{r}(t)$ and $\omg_{\ell}(t)$ completely separate from each other by distance $\gtrsim D_{0} + t \ge \dlt^{-1} + t$. In this case, one has from decay of the Biot--Savart kernel in $\bbR^2_+$ that \begin{equation}\label{eq:ODE-naive}
		\begin{split}
			\left| \frac{d}{dt} \kpp_{r}(t) \right| + 	\left| \frac{d}{dt} \mu_{r}(t) \right| + 	\left| \frac{d}{dt} E_{r}(t) \right| \lesssim \frac{1}{ (\dlt^{-1} + t)^2 } .
		\end{split}
	\end{equation} Integrating the above from $t = 0$ to $\infty$ shows that the maximum total variation of the enstrophy, impulse, and energy is  $O(\dlt)$. This is observed in \cite{ILL2003} for point vortices in $\bbR^2_+$. In the case of multiple Lamb dipoles, one sees that the interaction among the ``bulk'' vortices are therefore negligible. As we shall explain now, all the technical challenges arise from the interaction of ``errors.'' (This also shows why studying stability of the Lamb dipole without the sign condition retains all the difficulties from the case of multiple Lamb dipoles under the sign condition.)

	\smallskip \noindent \textit{Filamentation: vorticity transfer from front to rear}. In reality, the support confinement is not expected to hold, even when the initial data is \textit{exactly} the superposition of two Lamb dipoles far away from each other. The reason is generic occurrence of \textbf{filamentation}, or creation of long and thin tails, behind each Lamb dipole. This occurs simply because the velocity field generated by each Lamb dipole ``flushes away'' all the fluid outside its support; $v_{Lamb,1} \le 0$ in $\bbR^2_+\backslash B_{1}$ where $v_{Lamb}$ is the velocity of the Lamb dipole in the moving frame; see Figure \ref{fig:lamb}. This makes two Lamb dipoles eventually ``connected'' with each other, since the filaments travel very slowly. Furthermore, it tends to decrease $\kpp_{r}(t), \mu_{r}(t), E_{r}(t)$.
	
	The point here is that, in reality in the right-hand side of \eqref{eq:ODE-naive}, one has contribution from particles that have filamented away before time $t$. While this contribution is easily shown to be small for each fixed time, the highly nontrivial task is to show that it is still $O(\dlt)$ small after integrating from time $0$ to $\infty$. 
	
	\smallskip \noindent \textit{Lift-up: non-local impulse transfer from rear to front}. The analysis is significantly complicated by the fact that the following dipole $\omg_{\ell}$ \textbf{lifts up} the front dipole $\omg_{r}$, which is a \textit{non-local} effect unlike filamentation. This can be also seen from the Lamb velocity: $v_{Lamb,2} \ge 0$ for $x_{1} \ge 0$. In particular, $\mu_{r}(t)$ decreases from filamentation but increases from this lift-up effect from $\omg_{\ell}(t)$. The key computation in \cite{AJY} is that the total amount of lift-up is $O(\dlt)$ small: this is based on switching to Lagrangian coordinates and integrating the contribution to lift-up during the ``lifetime'' of a particle trajectory (i.e. $t \in [0,\infty)$) over all particles which move from $\{ x_{1} > L(t) \}$ to $\{ x_{1} \le L(t) \}$. They are referred to as \textit{gain} particles. 
	
	\smallskip \noindent \textit{Filamentation energy favorable regime: energy trapping}. Lastly, we need to recall that in the case of single Lamb dipole, we can deduce smallness of total filamentation thanks to variational principle; if too much vorticity filaments away, it contradicts energy conservation thanks to the sharp energy inequality \eqref{eq: SEI}. It is not known how to get this smallness \textit{dynamically}. In this regard, the case of two Lamb dipoles faces a new challenge, which is the presence of \textbf{filamentation energy favorable regime}. Large filamentation is energy unfavorable (that is, contradicts total conservation of energy) only under the additional condition that the radius of $\omg_{Lamb}^{\kpp_r,\mu_r}$ is larger than that of $\omg_{Lamb}^{\kpp_\ell,\mu_\ell}$. This potentially problematic large energy escape from $\omg_{r}(t)$ to $\omg_{\ell}(t)$ is ruled out by an ``energy trapping'' computation, which directly links $\frac{d}{dt} E_{r}(t)$ with $\frac{d}{dt} \mu_{r}(t)$ by comparing the relevant interaction kernels; it turns out that, under the above bootstrap hypotheses, energy change has to be much smaller than impulse change. This dynamically (and not kinematically) rules out large energy escape. 
	
	\begin{figure}
		\centering
		\includegraphics[width=0.6\linewidth]{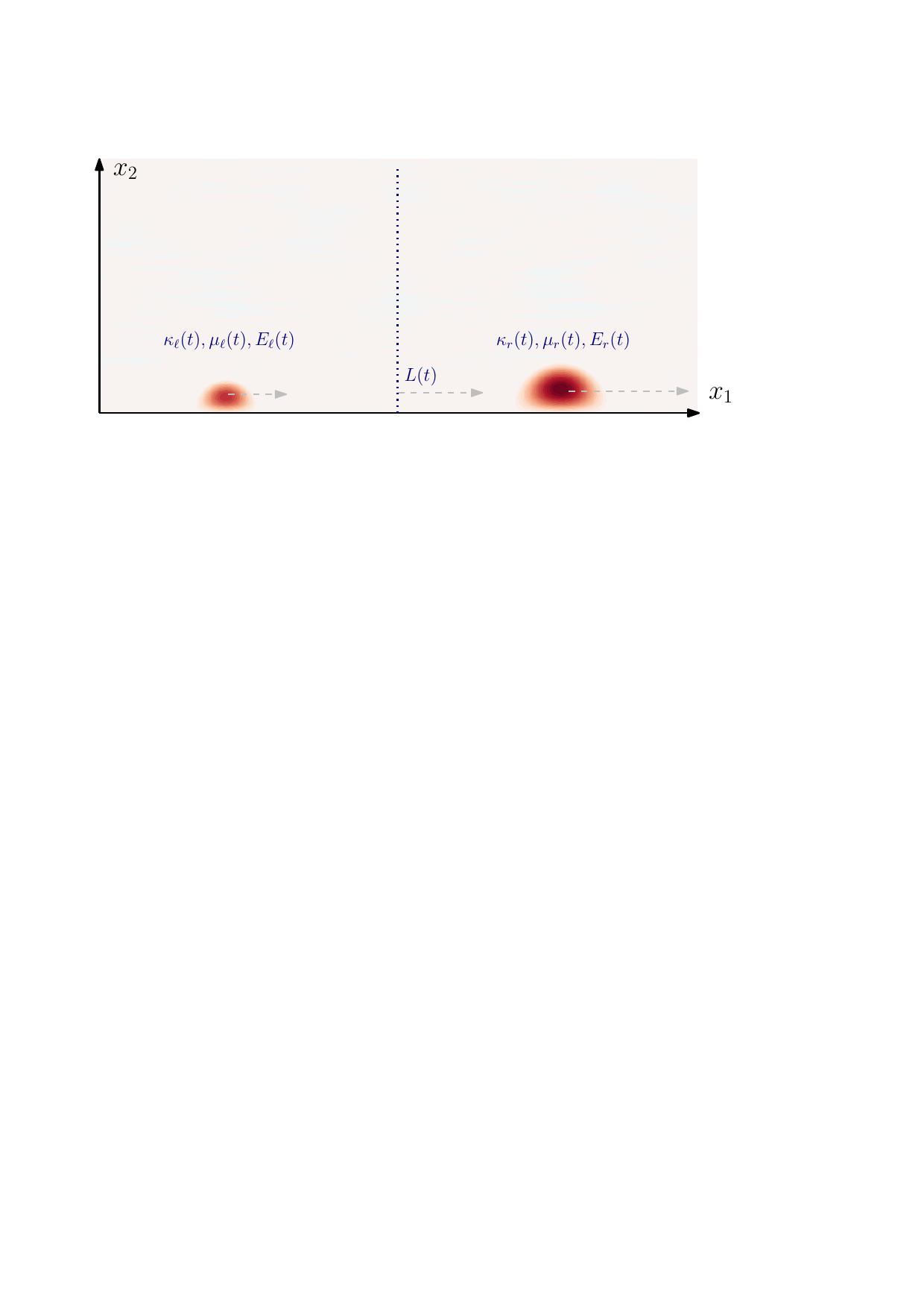}
		\caption{Decomposition in \cite{AJY} for stability of two Lamb dipoles}
		\label{fig:dipoles}
	\end{figure}
	
	\smallskip 
	
	All of these difficulties arise in the same way for our problem, and now we are ready to present additional difficulties due to having mixed-sign vorticity. 
	
	\medskip \noindent \textbf{Further difficulties in the mixed-sign case and resolutions}. To begin with, we recall the key difficulty: while the variational principle using impulse and enstrophy is the only known way to prove nonlinear stability of the Lamb dipole, in the mixed-sign case, the impulse is not coercive anymore. Trying to keep using the impulse by further restricting the admissible class of vorticities would not work, since it seems that the Lamb dipole fails to be \textit{local} energy maximizers even for localized, zero-impulse mixed-sign perturbations. 
	Therefore, we use the norm $\nrm{\omg}_{L^1_*}$ in the admissible class, and face the difficulty that it grows in time.

	\smallskip \noindent \textit{Weighted norm growth: tight borderline in the moving frame}. Our approach is first to decompose the solution by a borderline, as in the stability proof of two Lamb dipoles. In that case, the borderline $L(t)$ defining the decomposition was at least $O(\dlt^{-1}+t)$ far from the Lamb dipole parts. In our case, since we would like to apply the variational principle to the part right of the borderline, we face a new restriction: the norm $\nrm{\omg}_{L^1_*}$ should remain essentially the same in this region. Hence, we cannot take the borderline too far from the Lamb dipole: otherwise, $\nrm{\omg}_{L^1_*}$ will grow too much, destroying the variational characterization of the Lamb dipole in the mixed-sign case. However, it cannot be too close either; then the right part of the solution will interact too much with the left part, destroying the bootstrapping hypotheses. 
	
	Surprisingly, it turns out that there is a \textbf{unique} choice of such a borderline (up to a multiplicative constant), which is to take it $D := K \dlt^{-1/2}$ away in $x_{1}$ from an approximate Lamb dipole location, for a sufficiently large 
	constant $K>0$ depending only on $A$.  
	Unlike the two Lamb dipole case, the borderline needs to have this fixed distance from the Lamb dipole part; to do this systematically we introduce the moving frame and change variables ($Q$ stands for quadrant): \begin{equation*}
		\begin{split}
			\rho(t,x) = \omg(t, x - p_{1}(t) \bfe_1), \quad Q = \left\{ x \, : \, x_{1} \ge -K \dlt^{-1/2} \right\}, \quad \rho_{Q}(t) = \rho(t)\mathbf{1}_{Q}, \quad \tld{\rho}(t) = \rho(t) - \rho_{Q}(t). 
		\end{split}
	\end{equation*}

	\smallskip \noindent \textit{The additional bootstrap hypotheses}. Let us now introduce our list of bootstrap hypotheses; with $\kpp_{Q}(t) = \nrm{\rho_{Q}(t)}_{L^2}, |\mu|_Q(t) = \nrm{\rho_Q(t)}_{L^1_*}$, $E_Q(t) = E[\rho_{Q}(t)]$ and some $\lmb = O(\dlt)$ to be chosen: \begin{enumerate}
		\item $\nrm{ \rho_{Q}(t,\cdot) - \omg_{Lamb} }_{L^2 \cap L^1_*} \leq  {\frac{\varep}{10}}$ after a Lipschitz-in-time shift $p_{1}(t)$. 
		\item ${\int_{Q}x_2|\rho(t,x)|\mathbf{1}_{\Phi(t,A^{far,t})}(x) dx\leq\frac{\lambda}{10}.		}$
		\item $\kpp_{Q}(t) \le \kpp[\omg_{Lamb} ] + \lambda . $
		\item ${|\mu|_{Q}(t) \le \mu[\omg_{Lamb}] + \lambda.} $ 
		\item $E_{Q}(t) \ge E[\omg_{Lamb} ] - \lambda. $
	\end{enumerate} For precise definitions, see \S \ref{subsubsec:boot} below. The first one ensures that the moving frame is well-defined. Compared to the signed vorticity case, the differences are: we replaced $\mu_Q$ with $|\mu|_Q$ in (4) and to close (4), we introduced a new hypothesis (2). Let us now explain $\Phi(t,A^{far,t})$, which refers to ``far'' particles at time $t$. The assumption (2) says that the amount of $L^1_*$ norm contained in these far particles is always small. 
	
	\smallskip \noindent \textit{The moment condition and local versus far particles} Again, the most difficult issue is to keep the bootstrap assumption (4) on the $L^1_*$ bound in $Q$. The  particles may stay in $Q$ for an arbitrarily long time (depending on their initial location), during which they can contribute to growth of $L^1_*$ by traveling in the positive $x_{2}$ direction. At least, we see that if a particle goes far in $x_{2}$, it should cross $\partial Q$ from right to left, since the moving frame defining $Q$ moves to right with approximately unit speed and such far particles have $O(\varepsilon)$ speed via the bootstrap assumption (1). 
	
	The key idea introduced here is to divide \textit{particles at each $t$} those who were in $Q$ at the initial time     
	into two categories: local versus far particles. Local particles $\Phi(t,A^{loc,t})$ are defined by those whose trajectories were ``close'' to the Lamb dipole part for some instant of time. All the others are defined to be far particles $\Phi(t,A^{far,t})$. This dichotomy is precisely defined by introducing a large square $S_{R} = [-R,R] \times [0,R]$ (where we take $R = 100000$) in the moving frame: local particles are those who intersects $S_{R}$
	before time $t$.  
	\begin{itemize}
		\item Controlling local particles: the relevant key lemmas are  Lemma \ref{lem:vel-Q-decomp} and Lemma \ref{lem:S_R_to_H}, which state that 
		after a local particle intersects $S_{R}$,
		its height ($x_{2}$ coordinate) is \textit{uniformly} bounded by a constant which is denoted by $H$
		as long as it stays in $Q$. 
		This is nontrivial: the timescale a particle travels from $S_{R}$ to $\partial Q$ is $O( \dlt^{-1/2})$ and since perturbations may travel in $x_{2}$ with speed $\varepsilon$, the height can be in principle $\varepsilon\dlt^{-1/2}$, which is huge ($\dlt \ll \varepsilon$). Regarding this issue, the main observation is that the Lamb dipole velocity ``pushes down'' fluid behind it (again, see Figure \ref{fig:lamb}), and this effect is dominant when the height is not too large. Based on this, the lemma is proved by decomposing $Q$ into three regions, and bounding the velocity differently in each of these regions (Figure \ref{fig:trajectory}).

		\item Controlling far particles: this is the most technically involved part of the paper, which is treated in Lemma \ref{lem:v_2_on_S} and Lemma \ref{lem:far_cont}. Unlike local particles, the difficulty is that a far particle can gain arbitrarily large amount of impulse 
		during its lifetime in $Q$.
		Here is where the moment smallness \eqref{assump_moment} is necessary: roughly speaking, if one computes the integral of $|x|\rho(t,x)$ along particle trajectories, potential increase in $x_{2}$ is balanced with the decrease of $x_{1}$, because we are in a moving frame. While this is the heuristic reason why we can close the assumption (2), the most challenging part is to treat the interaction between a far field particle near the borderline $\{ x_{1} = -K\dlt^{-1/2} \} $ with the ``error'' which is located close to the left of this borderline: there is no bootstrap assumption giving smallness of this error part. To handle this interaction, we decompose the error into three parts (see Figure \ref{fig:far}), and bound the corresponding interaction differently in each part.
	\end{itemize}
	
	Lastly, we note that the support area boundedness is not assumed in \eqref{eq:ini-assumption}: we use initial weighted $L^1$ smallness and a lemma (Lemma \ref{lem:L2-L1}) which allows one to pass from $L^2$ stability to $L^1$.

	\subsection{Potential Applications and Extensions}\label{subsec:pa}
	We believe that our result and the proof strategy have several applications and extensions, and we list a few below. 
	
	To begin with, our stronger stability statement for the Lamb dipole (namely, without the sign assumption) gives \textit{gradient growth} for a larger class of perturbations of the Lamb dipole: roughly speaking, proof of instability requires stability. More precisely, \cite{JYZ} obtained superlinear gradient growth for the signed vorticity in $\bbR^2_+$ vanishing on the boundary (improving an earlier work \cite{CJ-Lamb} which gives linear growth). Since our stability does not require the sign condition, it implies that the result of \cite{JYZ} holds for an \textit{open} set in $L^1 \cap L^\infty \cap L^1_{|x|}(\bbR^2_+)$ of the Lamb dipole, where $L^1_{|x|}$ denotes the $|x|$ weighted $L^1$ norm. 
	
	Naturally, we expect the result to hold for more general vortex dipoles which are obtained by maximizing the energy under a few natural constraints, for instance those appear in \cite{AC2019,AJQW}. We plan to discuss this extension in a future work. 
	
	Furthermore, our result can be the starting point for studying interactions of dipoles in the plane, at least when each of them is odd symmetric in $x_{2}$, so that they all move parallel along the $x_{1}$ axis. More concretely, our strategy should allow one to prove stability of multiple Lamb dipoles (moving in parallel with each other) under the assumption that some of them have negative signs in $\bbR^2_+$, extending earlier works \cite{ACJ,AJY}. Moreover, it will be interesting to extend the theory of Iftimie--Lopes Filho--Nussenzveig Lopes \cite{ILL2003} to the mixed sign case, see \S \ref{subsec:refs} below. 
	
	Lastly, all of this considerations can be done for the axisymmetric Euler equations without swirl, in particular one may study stability of the Hill's vortex (\cite{Choi-Hill}) without the sign condition on the vorticity. It would be interesting to see whether the ideas developed here can be applied to other equations as well. 
	
	\subsection{Related works}\label{subsec:refs}
	
	\subsubsection{Remarks for vortex stability in the plane} We note that the stability of vortices defined in the whole plane $\bbR^{2}$ has a similar restriction on the sign on the vorticity: for instance,
	stability of the Rankine vortex patch is known only under the sign condition \cite{SV09,MR4417385,WP85}. Similarly, stability of nonnegative, monotone radial vortices is known only for nonnegative perturbations. For rotating patches, the situation is even worse: the stability results for Kirchhoff ellipses \cite{Tang,Wan86} and Kelvin waves \cite{MR4493276} require not only sign condition but also additional conditions on the support of the perturbations.
	
	On bounded domains, nonlinear stability can hold without a sign condition \cite{MP85,Bu05}; vorticity cannot escape to infinity, so the difficulty discussed in \S \ref{subsec:ideas} does not arise. Burton \cite{Bu05} established a variational stability criterion for steady vorticities that strictly maximize or minimize the kinetic energy within their rearrangement class. More recently, Wang \cite{Wang25} proved orbital stability, up to rotations, of a class of steady flows in a disk, including the truncated Lamb dipole. 

	\subsubsection{Dipole stability} Experiments and numerical simulations of the two-dimensional vorticity equation show emergence of vortex dipoles  \cite{VF89,FV94,Hesthaven,Niel2}, suggesting a very strong form of stability for them. 
	
	In the mathematical literature, there are numerous works on the existence of vortex dipoles \cite{Norbury75,Tu83,Burton88, Burton89, Van10,CLW14,CLZ21,HM17,HaHm, GaHa, ACJSW, CJS, HT} by various methods. Stability was obtained for variationally constructed vortex dipoles in the pioneering work \cite{BNL} under sign and symmetry assumptions, see also \cite{Burton21, AC2019, Wang.2024,CQZZ25}.  
	
	In the specific case of the Lamb dipole, uniqueness of the maximizer is known in several variational formulations \cite{Burton96,Burton05b,AC2019,ACJ}. The concentrated vortex patches case was settled in \cite{CQZZ25}. More recently, Li--Song--Zhou \cite{LSZ2026}  proved quantitative orbital stability of Lamb dipole. They refined Theorem~\ref{thm:Lamb-sign} by proving  a uniform-in-time $O(\delta)$ bound for the modulated error when its initial size is $\delta$. However, their nonlinear stability result still assumes nonnegative vorticity in the half-plane. Without sign or symmetry assumptions, nonlinear orbital stability of the Lamb dipole remains unclear, although some progress has been made at the linearized level. Protas \cite{Pro} reported a singular unstable mode in a numerical and asymptotic study. Li--Song--Zhou \cite{LSZ2026} proved mode stability in $L^2$ and spectral stability in weighted $L^2$ spaces, without symmetry or sign restrictions. Numero--Ventura \cite{NumeroVentura26} obtained polynomial bounds for the linear evolution in $L^1\cap L^p$, $p>2$, with growth arising from core circulation and generalized eigenvectors at zero.

	\subsubsection{Dynamical results on the half-plane}
	A nonnegative vorticity defined in the half-plane has the tendency to move towards the positive $x_{1}$ axis, and there are a few rigorous results in this direction. For nontrivial, nonnegative initial vorticity $\omg_0\in L^\infty(\bbR^2_+)$ with compact support, the center of vorticity moves to the right with speed bounded away from zero \cite{Iftimie1}. Moreover, the support satisfies $x_2\leq C(t\log t)^{1/3}$ and $x_1\geq-C(t\log t)^{1/2}$ for sufficiently large $t$, where $C$ depends on the initial data \cite{Iftimie1,ILL2003}. 
	
	Iftimie--Lopes Filho--Nussenzveig Lopes \cite{ILL2003} obtained a conditional scattering result for the rescaled vorticity $t^2\omg(t,t\cdot)$, which preserves the $L^1$ norm. More precisely, if the rescaled vorticity converges weakly as a measure on $\overline{\bbR^2_+}$ as $t\to\infty$, then  $$t^2\omg(t,t\cdot)\rightharpoonup\sum_{i=1}^{N}\Gamma_i\delta_{(a_i,0)},$$
	where $1\leq N\leq\infty$, $\Gamma_i>0$, and the nonnegative $a_i$ are bounded and can accumulate only at $0$. For a single traveling dipole, $a_1$ is its speed and $\Gamma_1$ is its circulation in $\bbR^2_+$. 
	
	
	\subsubsection{Multi-vortex solutions}
	Towards the above scattering result, it is natural to study dynamics of vorticities which are concentrated in several points in the half-plane, which we refer to as ``multi-vortex'' solutions. 
	
	At the point vortex level, there are a lot of computational and analytical studies for interactions of dipoles; see \cite{ArefStremler1999,Eckhardt1988Integrable,LydonNazarenkoLaurie2022,TophojAref2008,Price1993,Eckhardt1988Scattering,ManakovShchur1983,EckhardtAref1988}. In particular, \cite{Yang} studied the motion of two vortices in the half-plane without a sign restriction and proved that they neither collide with each other nor reach the boundary in finite time. In this case, there are basically two regimes, one which the vortices move in almost straight lines with weak interaction, and the other which they interact strongly and \textit{leapfrog}. This leapfrogging dynamics has been intensively investigated by experimental and computational studies (\cite{BehringGoodman2019,WhitchurchEtAl2018,TophojAref2013,Acheson2000}). In the mathematical literature, global solutions approaching two vortex pairs traveling in opposite directions were constructed by a gluing method in \cite{DdPMP2}; see also \cite{DdPMP}. The work \cite{HHM} constructed four leapfrogging vortex patches whose motion is periodic in a translating frame, using KAM theory and a Nash--Moser iteration. These works do not establish orbital stability of the constructed solutions, which seems to be a very challenging problem for leapfrogging solutions. 
	
	Stability of highly concentrated vortex quadrupoles under odd symmetry in both variables was studied in  \cite{CJY} under nonnegativity in the first quadrant. For Lamb dipoles,  \cite{CJY} proved forward-in-time stability of two sufficiently separated, oppositely signed dipoles under odd symmetry in both variables and nonnegativity in the first quadrant. 
	
	\subsubsection{Viscous dipoles} So far we have been only concerned with mathematical works for the ideal vorticity equation, and another interesting topic is to study viscous vortices, in particular viscous dipoles. Starting with a general point vortex configuration, \cite{Gallay11} gives $O(1)$ (i.e. uniform in viscosity) time asymptotics of the Navier--Stokes solutions. In the specific case of traveling vortex pairs, we refer to very recent work \cite{DoGa} for the long-time evolution of viscous dipoles; see also \cite{DG,ZhZh,DD} for related developments.

	\subsection{Open problems}\label{subsec:open} In our opinion, the mathematical analysis of vortex dipoles is still at an early stage. Here we list a few open problems.
	
	\begin{enumerate}
		\item Odd symmetry condition in $\bbR^{2}$: Our result gives stability of the Lamb dipole in the whole plane $\bbR^{2}$ with respect to odd symmetric perturbations in $x_{2}$. Stability of the Lamb dipole could be simply false without this odd symmetry condition; see \cite{Pro}. 
		\item Stable vortex dipole with mixed sign in $\bbR^2_+$: The existence of stable (in a similar sense with Theorem \ref{thm:main}) traveling waves with mixed-sign on $\bbR^2_+$ is not known. There are a few families of traveling waves with mixed signs in $\bbR^2_+$. One family comes from so-called \textit{shielded} Lamb dipoles, which are obtained from truncating the Bessel's function $J_{1}$ at its second, third, and so on zeroes. They are expected to be highly unstable (\cite{OVH94}). Another family can be obtained by desingularization of point vortex equilibria (\cite{CQZZ-doubly}). 
		\item Asymptotic stability: an outstanding open question is \textit{asymptotic} stability of the Lamb dipole. However given the result of \cite{FHM2026}, it is conceivable that there exist permanent oscillations near the Lamb dipole as well. 
		\item Collision of Lamb dipoles: Instead of considering Lamb dipoles moving away from each other as in \cite{CJY}, one may study situations when they approach each other. Then, the challenging problem is to describe dynamics after the ``collision.''
		In a highly asymmetric regime, however, the present result (Theorem \ref{thm:main}) already gives some information. For instance, consider an order-one Lamb dipole traveling to the right and a much smaller, oppositely signed Lamb dipole of size $O(\delta)$  traveling to the left, with the smaller dipole initially located to the right of the larger one, see Figure \ref{fig:difficulty}. Our theorem implies that at least the order-one dipole persists even after (possible) collision. Beyond this highly asymmetric regime, even head-on collisions between Lamb dipoles of comparable but unequal sizes remain a challenging problem. Collisions with non-parallel trajectories appear to be even more challenging. See \cite{Or92} for numerical simulations of collisions of Lamb dipoles.
	\end{enumerate}

	\subsection{Notation}\label{subsec:nota} 
	We use the following notation in this paper. 
	\begin{itemize}
		\item The integrals $\int$, points $x,y$ and norms $\|\cdot\|$ will be taken in the upper half-plane $\bbR^2_+$, unless otherwise specified.
		
		\item For $x = (x_1,x_2) \in \bbR^2_+$, we write $\bar{x} = (x_1,-x_2)\in\bbR^2$. With this notation, for two points $x, y \in \bbR^2_+$, we have $|x-\bar{y}| = |\bar{x}-y| \ge |x_2+y_2|$. 
		\item For a measurable set $A \subset \ohp$, we write $\mathbf{1}_{A}$ to be the indicator function on $A$. 
		\item Given a measurable function $f$ on $\bbR^2_+$, we consider its decomposition into the positive and negative parts; we write $f = f^{(+)} - f^{(-)}$ with $f^{(+)}(x) := f(x) \mathbf{1}_{ \{ f > 0 \} }$ so that $f^{(+)}, f^{(-)} \ge 0$. 
	\end{itemize}

	\subsection{Organization of the paper}\label{subsec:orga}
	
	The rest of the paper is organized as follows. 
	After some preparations in \S \ref{sec:prelim}, the bootstrap hypotheses are introduced in \S \ref{sec:Boot}. This section also includes the proof that these hypotheses hold at least for a short time interval. Next, we prove in \S \ref{sec:conseq} various estimates on the solution, assuming the bootstrap hypotheses. Lastly, all of the bootstrap hypotheses are closed in \S \ref{sec:closing}.

	\subsection*{Acknowledgments}
	KA is supported by the JSPS through the Grant-in-Aid for Scientific Research (C) 24K06800 and MEXT Promotion of Distinctive Joint Research Center Program JP MXP0619217849. 
	KC was supported by the National Research Foundation of Korea (NRF) grant funded by the Korean government (MSIT) (No. RS-2023-00274499).
	IJ was supported by the NRF grant from the Korea government (MSIT), No. 2022R1C1C1011051, RS-2024-00406821 and the Asian Young Scientist Fellowship. 
	GQ was supported by National Key R\&D Program of China (Grant 2025YFA1018400) and NNSF of China (Grant 12471190).
	
	\subsection*{AI Declarations} We declare that no AI tools were used in any part of this work.
	
	\section{Preliminaries}\label{sec:prelim}

	\subsection{Lamb dipole} \label{subsec:Lamb} 
	
	Let us now provide some detailed computations regarding the normalized Lamb dipole. The stream function $\psi_{Lamb}$ can be written explicitly by \begin{equation}\label{eq:Lamb-psi}
		\psi_{Lamb}(x) =   
		\left\{
		\begin{aligned}
			x_{2}\left( 1 - \frac{{2}J_{1}(c_{L}|x|)}{ c_{L}|x| J_{0}(c_{L}) } \right)	, & \quad |x| \le 1 \\
			x_{2}|x|^{-2},	& \quad |x| > 1 
		\end{aligned}
		\right.,
\end{equation} 
Then from \eqref{eq:Lamb-def}, one can check that the vorticity $\omg_{Lamb}=
-\lap\psi_{Lamb}$ satisfies
\begin{equation}\label{eq:Lamb-vor}
	\begin{split}
		\omg_{Lamb}(x) = c_{L}^2(\psi_{Lamb}(x)-x_2)_{+} = 
		\left\{
		\begin{aligned}
			c_{L}^2(\psi_{Lamb}(x)-x_2), & \quad |x| \le 1 \\
			0,	& \quad |x| > 1 
		\end{aligned}
		\right.. 
	\end{split}
\end{equation} Here $(g)_+ := \max\{ g, 0 \}$. The relation \eqref{eq:Lamb-vor} says that $\omg_{Lamb}$ defines a traveling wave of \eqref{eq:2D-Euler} with velocity $\mathbf{e}_{1}$. 

We also note that $u_{Lamb} := -\nb^\perp \psi_{Lamb}=(\partial_{x_2}\psi_{Lamb}, \partial_{x_1}\psi_{Lamb})$ is explicitly given.
In this paper, we are interested in the dynamics \textit{outside} the support of the Lamb dipole, where the velocity field takes the following simple form:
for $|x|>1$,  \begin{equation}\label{eq:Lamb-vel}
	\begin{split}
		u_{Lamb}(x) = 
		\left(		(x_{1}^2 - x_{2}^{2})|x|^{-4},\, 
		2x_{1}x_{2}|x|^{-4}\right).
	\end{split}
\end{equation}

\subsection{Bounds for the stream function and velocity}    Given the vorticity $\omg$, we define the stream function  $\psi = (-\lap)^{-1}\omg$ and the velocity $u = (u_1,u_2)^\top$ by $u =- \nb^\perp\psi=(\partial_{x_2}\psi, \partial_{x_1}\psi)
$. Introducing the kernels \begin{equation}\label{eq:G-def}
	\begin{split}
		\bfG(x,y) := \frac{1}{4\pi} \log \left( 1 + \frac{4x_{2}y_{2}}{|x-{y}|^{2}} \right)  >0,\quad x\neq y \in\mathbb{R}^2_+,
	\end{split}
\end{equation}
and \begin{equation}\label{eq:K-def}
	\begin{split}
		\bfK(x,y) = (\bfK_1(x,y),\bfK_2(x,y)) := -\nb^\perp_{x}\bfG(x,y)=
		\left(\frac{-y_{2} \left( x_{2}^{2} - y_{2}^{2} - (x_{1}-y_{1})^{2} \right) }{\pi |x-y|^{2} |x-\bar{y}|^{2}},
		\frac1{2\pi}\frac{x_1-y_1}{|x-y|^{2}} \frac{4x_2y_2}{|x-\bar{y}|^2}\right),
	\end{split}
\end{equation} we have the representation formulas  \begin{equation}\label{eq:psi-G}
	\begin{split}
		\psi(x)
		& = G[\omega](x)=  \int_{\mathbb{R}^2_{+}} \bfG(x,y) \omega(y)\,dy , \quad u(x)=K[\omega](x) =\left(K_1[\omega](x),K_2[\omega](x)\right)= \int_{\bbR^2_+} \bfK(x,y) \omg(y) \, dy . 
	\end{split}
\end{equation}

\begin{lemma}\label{lem:vel-decay}
	For any $0<\alp\le1$, there is a constant $C_\alp>0$   such that 
	\begin{equation*}
		\begin{split}
			0\leq \bfG(x,y) \le C_{\alp}  \frac{(x_2y_2)^\alp}{|x-{y}|^{2\alp}} 
		\end{split}
	\end{equation*}  for all $x \ne y$. The velocity kernel $\bfK$ satisfies \begin{equation}\label{eq:K-decay}
		\begin{split}
			|\bfK(x,y)| \le \frac{Cy_{2}}{|x-y||x-\bar{y}|}
		\end{split}
	\end{equation} for $x \ne y$.
\end{lemma}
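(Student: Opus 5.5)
Both bounds are elementary and follow directly from the explicit formulas \eqref{eq:G-def} and \eqref{eq:K-def}. We use only the identity
\begin{equation*}
|x-\bar{y}|^2 = |x-y|^2 + 4x_2y_2,
\end{equation*}
valid for all $x,y\in\bbR^2_+$, together with $|x-\bar y|\ge |x-y|$ and $|x-\bar y|\ge x_2+y_2$.

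\emph{The bound on $\bfG$.} Nonnegativity is clear, since $4x_2y_2/|x-y|^2\ge 0$ and so the logarithm is nonnegative. For the upper bound, set $s:=4x_2y_2/|x-y|^2\ge0$. For every $0<\alp\le1$ the elementary inequality $\log(1+s)\le C_\alp s^\alp$ holds for all $s\ge0$. To see this, note that $\log(1+s)\le s\le s^\alp$ when $s\le1$. When $s\ge1$, the function $\log(1+s)/s^\alp$ is continuous on $[1,\infty)$ and tends to $0$ as $s\to\infty$, so it is bounded there. Substituting $s$ and absorbing the factor $4^\alp/(4\pi)$ into $C_\alp$ gives the stated bound.

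\emph{The bound on $\bfK$.} We estimate the two components separately.

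For $\bfK_2$, write
\begin{equation*}
|\bfK_2| = \frac{1}{2\pi}\,\frac{|x_1-y_1|}{|x-y|^2}\,\frac{4x_2y_2}{|x-\bar y|^2}\le \frac{2}{\pi}\,\frac{x_2y_2}{|x-y|\,|x-\bar y|^2}.
\end{equation*}
Since $x_2\le x_2+y_2\le|x-\bar y|$, one factor of $|x-\bar y|$ absorbs $x_2$, and we obtain $|\bfK_2|\le C y_2/(|x-y||x-\bar y|)$.

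For $\bfK_1$, the numerator is $y_2\,|x_2^2-y_2^2-(x_1-y_1)^2|$. We bound
\begin{equation*}
|x_2^2-y_2^2-(x_1-y_1)^2|\le |x_2-y_2|(x_2+y_2)+(x_1-y_1)^2\le |x-y|\,|x-\bar y| + |x-y|^2\le 2|x-y|\,|x-\bar y|,
\end{equation*}
using again $x_2+y_2\le|x-\bar y|$ and $|x-y|\le|x-\bar y|$. Dividing by $\pi|x-y|^2|x-\bar y|^2$ then gives $|\bfK_1|\le 2y_2/(\pi|x-y||x-\bar y|)$. This proves \eqref{eq:K-decay}.

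There is no real obstacle in either part. The only point requiring care is the factorization of the $\bfK_1$ numerator, which yields exactly one factor of $|x-y|$ and one of $|x-\bar y|$, rather than the cruder bound $|x-\bar y|^2$ that would lose the decay.
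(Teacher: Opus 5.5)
Your proof is correct and follows the paper's approach. The paper only asserts that both bounds follow directly from the explicit formulas for $\bfG$ and $\bfK$, and your elementary estimates (the inequality $\log(1+s)\le C_\alp s^\alp$, and the bounds $|x-y|\le|x-\bar y|$ and $x_2+y_2\le|x-\bar y|$) supply exactly the omitted details.
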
 
\begin{proof}
	The proofs  are straightforward from the definitions of the kernels from \eqref{eq:G-def} and \eqref{eq:K-def}.
\end{proof}
\begin{lemma}\label{lem:vel-L-infty}
	For any $0<\alp \le 1$, there exists a constant $C_{\alp}>0$ such that 
	\begin{equation}\label{eq:vel-L-infty-alpha2}
		\begin{split}
			\nrm{x_{2}^{-1}\psi}_{L^{\infty}}, \,  \nrm{u}_{L^{\infty}} \le C_{\alp}\nrm{\omg}_{L^{2}}^{1-\alp}  \nrm{\omg}_{L^{\infty}}^{\alp/2} \nrm{ \omg}_{L^{1}}^{\alp/2} . 
		\end{split}
	\end{equation}
\end{lemma}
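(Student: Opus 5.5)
The plan is to derive both bounds from the pointwise kernel estimates of Lemma \ref{lem:vel-decay}, splitting each integral into a near and a far region and optimizing the cutoff radius. For the velocity, \eqref{eq:K-decay} together with $|x-\bar y|\ge |x-y|$ gives $|\bfK(x,y)|\le C\, y_2/(|x-y||x-\bar y|)$. Since $y_2\le |x-\bar y|$ (because $|x-\bar y|\ge x_2+y_2$), this yields the simpler bound $|\bfK(x,y)|\le C|x-y|^{-1}$. For the stream function, Lemma \ref{lem:vel-decay} gives
\[
x_2^{-1}\bfG(x,y)\le C_\beta\, x_2^{\beta-1}y_2^\beta |x-y|^{-2\beta}.
\]
Taking $\beta=1/2$ and using the elementary bound $\log(1+s)\le \min(s,\,C\sqrt s)$ together with $4x_2y_2\le|x-\bar y|^2$, I would reduce to
\[
x_2^{-1}\bfG(x,y)\lesssim \frac{y_2}{|x-y||x-\bar y|}\lesssim \frac{1}{|x-y|},
\]
since $\log(1+4x_2y_2/|x-y|^2)\le \log(|x-\bar y|^2/|x-y|^2)\le 2(|x-\bar y|-|x-y|)/|x-y|\le 4\min(x_2,y_2)/|x-y|$. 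So both quantities are controlled by $\int |x-y|^{-1}|\omg(y)|\,dy$.

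Next I split this integral at radius $r$. On the near region, Hölder with $\omg\in L^{p}$, for $p>2$ chosen by interpolation, gives
\[
\int_{|x-y|<r}|x-y|^{-1}|\omg(y)|\,dy\lesssim \nrm{\omg}_{L^p}\, r^{1-2/p}.
\]
On the far region I would use Cauchy--Schwarz with $L^2$, but $|x-y|^{-1}\notin L^2$ at infinity. So I instead use a three-piece split: $|x-y|<r_1$ handled with $L^\infty$ (contribution $\lesssim r_1\nrm{\omg}_{L^\infty}$), $r_1\le|x-y|<r_2$ handled with $L^2$ (contribution $\lesssim \nrm{\omg}_{L^2}\sqrt{\log(r_2/r_1)}$), and $|x-y|\ge r_2$ handled with $L^1$ (contribution $\lesssim \nrm{\omg}_{L^1}/r_2$).

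Optimizing, I choose $r_1=\nrm{\omg}_{L^2}/\nrm{\omg}_{L^\infty}$ and $r_2=\nrm{\omg}_{L^1}/\nrm{\omg}_{L^2}$, and note that $r_2/r_1=\nrm{\omg}_{L^1}\nrm{\omg}_{L^\infty}/\nrm{\omg}_{L^2}^2\ge1$ by Hölder. Writing $\Lambda:=r_2/r_1$, this gives the bound
\[
\nrm{\omg}_{L^2}\bigl(1+\sqrt{\log\Lambda}\bigr).
\]
Since $\sqrt{\log\Lambda}\le C_\alpha \Lambda^{\alpha/2}$ for every $\alpha>0$, the bound becomes
\[
C_\alpha\nrm{\omg}_{L^2}\Lambda^{\alpha/2}=C_\alpha\nrm{\omg}_{L^2}^{1-\alpha}\nrm{\omg}_{L^\infty}^{\alpha/2}\nrm{\omg}_{L^1}^{\alpha/2},
\]
which is exactly \eqref{eq:vel-L-infty-alpha2}.

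The main obstacle is the stream-function bound: one must verify that $x_2^{-1}\bfG$ is dominated by $C|x-y|^{-1}$ uniformly, including the near-boundary regime $x_2\to0$. The logarithm estimate above handles this, since $\min(x_2,y_2)/x_2\le1$. If that inequality chain turns out to be off by constants, I would instead fall back on the $\alpha$-bound of Lemma \ref{lem:vel-decay} with $\alpha=1/2$, combined with $y_2\le x_2+|x-y|$.
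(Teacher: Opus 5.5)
Your proof is correct. For the velocity it is the paper's argument: the bound $|\mathbf{K}(x,y)|\le C|x-y|^{-1}$, followed by the $L^\infty$/$L^2$/$L^1$ split of $\int|x-y|^{-1}|\omega(y)|\,dy$ at two radii.

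The differences are minor. The paper first replaces $\sqrt{\log(R_2/R_1)}$ by $(R_2/R_1)^\beta$ and then solves a two-equation system for $R_1,R_2$. This involves the implicit reparametrization $\alpha=2\beta/(1+2\beta)$ and a separate appeal to the ``well-known'' case $\alpha=1$. Your choice $r_1=\|\omega\|_{L^2}/\|\omega\|_{L^\infty}$, $r_2=\|\omega\|_{L^1}/\|\omega\|_{L^2}$ gives the sharper intermediate bound $\|\omega\|_{L^2}(1+\sqrt{\log\Lambda})$, with $\Lambda=\|\omega\|_{L^1}\|\omega\|_{L^\infty}/\|\omega\|_{L^2}^2\ge 1$. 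It also handles every $\alpha\in(0,1]$ at once.

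For $x_2^{-1}\psi$ the paper needs no kernel estimate. Since $\psi=0$ on $\{x_2=0\}$, one has $|\psi(x)|\le x_2\|\partial_{x_2}\psi\|_{L^\infty}=x_2\|u_1\|_{L^\infty}$, so this bound follows from the velocity bound. Your direct estimate $x_2^{-1}\mathbf{G}(x,y)\le(\pi|x-y|)^{-1}$ is also valid; it comes from $\log t\le t-1$ and $|x-\bar y|-|x-y|\le 2\min(x_2,y_2)$, and is just more work.

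The $\beta=1/2$ preamble and the fallback are unnecessary, and the fallback on its own would not work. Combining the $\alpha=1/2$ bound of Lemma \ref{lem:vel-decay} with $y_2\le x_2+|x-y|$ leaves a term $x_2^{-1/2}|x-y|^{-1/2}$, which is not uniformly bounded as $x_2\to 0$.
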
  
\begin{proof}
	Since $u_{1} = -\rd_{x_2}\psi$ and $\psi = 0$ on $\{ x_{2} = 0 \}$, we have from Hardy's inequality that \begin{equation*}
		\begin{split}
			\nrm{x_{2}^{-1}\psi}_{L^{\infty}} \le  \nrm{u_1}_{L^{\infty}} .
		\end{split}
	\end{equation*} It hence suffices to show $\nrm{u}_{L^{\infty}} \le C_{\alp}\nrm{\omg}_{L^{2}}^{1-\alp}  \nrm{\omg}_{L^{\infty}}^{\alp/2} \nrm{ \omg}_{L^{1}}^{\alp/2}$. This is well-known in the case $\alpha = 1$. To prove it for general $\alpha \in (0,1)$, we use $|\bfK(x,y)| \le C|x-y|^{-1}$ (which follows from \eqref{eq:K-decay} and $y_{2} \le |x-\bar{y}|$) and for some $0<R_1 \le R_2$ to be determined, write \begin{equation*}
		\begin{split}
			|u(x)| \lesssim \left[ \int_{ B_{x}(R_{1}) } +  \int_{ B_{x}(R_{2}) \backslash B_{x}(R_{1}) }  + \int_{ \bbR^2_+ \backslash B_{x}(R_{2}) }     \right]  \frac{|\omg(y)|}{|x-y|} \, dy. 
		\end{split}
	\end{equation*} Using the $L^\infty$, $L^2$, and $L^1$ norms of the vorticity in each of the regions, we bound \begin{equation*}
		\begin{split}
			|u(x)| \lesssim R_{1}\nrm{\omg}_{L^\infty} + \sqrt{\log\left( R_{1}^{-1}R_{2} \right) }\nrm{\omg}_{L^2} + \frac{1}{R_{2}}\nrm{\omg}_{L^{1}} 
			\lesssim_{\beta} R_{1}\nrm{\omg}_{L^\infty} + \left( \frac{R_{2}}{R_{1}} \right)^{\beta} \nrm{\omg}_{L^2} + \frac{1}{R_{2}}\nrm{\omg}_{L^{1}} 
		\end{split}
	\end{equation*} for any $\bt>0$. Given $\bt$, we select $0<R_1\le R_2$ to satisfy \begin{equation*}
		\begin{split}
			\nrm{\omg}_{L^1} = R_{1}R_{2} \nrm{\omg}_{L^\infty}, \qquad \left( \frac{R_1}{R_2} \right)^{1+2\bt} \nrm{\omg}_{L^1}  \nrm{\omg}_{L^\infty} = \nrm{\omg}_{L^2}^{2}. 
		\end{split}
	\end{equation*} This is possible since we always have $\nrm{\omg}_{L^1}  \nrm{\omg}_{L^\infty} \ge \nrm{\omg}_{L^2}^{2}$. Solving this system of equations for $R_{1}, R_{2}$ and plugging them back into the previous bound for $|u(x)|$, we obtain \begin{equation*}
		\begin{split}
			|u(x)| \lesssim_{\beta} \left(\nrm{\omg}_{L^1}  \nrm{\omg}_{L^\infty} \right)^{ \frac12 \frac{2\bt}{1+2\bt} } \nrm{\omg}_{L^2}^{\frac{1}{1+2\bt}}
		\end{split}
	\end{equation*} which finishes the proof. 
\end{proof}

\begin{remark}
	For any $f,g\in L^1\cap L^\infty(\mathbb{R}^2_+)$, we have the identity
	$$
	\int_{\mathbb{R}^2_+}K_2[f]\cdot g\,dx=
	-\int_{\mathbb{R}^2_+}f\cdot K_2[g]\,dx
	$$ thanks to the anti-symmetry of the kernel $K_2$. As a consequence,
	$$
	\int_{\mathbb{R}^2_+}K_2[f]\cdot f\,dx=0. 
	$$ 
\end{remark}

\subsection{Kinetic energy}\label{subsubsec:energy} For $\omg \in L^1 \cap L^\infty (\bbR^2_+)$, we recall the kinetic energy $E=E[\omega]$ defined by \begin{equation*}
	\begin{split}
		E[\omega]=  \frac12 \int_{\mathbb{R}^2_+}\psi(x)\omega(x)\,dx= \frac12 \int_{\mathbb{R}^2_+}|u(x)|^{2} \,dx = \frac12 \iint_{\bbR^2_+ \times \bbR^2_+} \omg(x)\bfG(x,y) \omg(y) dxdy, 
	\end{split}
\end{equation*} where $\bfG$ is defined in \eqref{eq:G-def}. Similarly, the \emph{interaction energy} is defined by 
\begin{equation}\label{eq:E-int-def}
	\begin{split}
		E_{inter}[\omg,\tld{\omg}] = \frac12 \iint_{\bbR^2_+ \times \bbR^2_+} \omg(x) \bfG(x,y) \tld{\omg}(y) dxdy = E_{inter}[\tld\omg,{\omg}].
	\end{split}
\end{equation} We note
$$
E[f+g]=E[f]+E[g]+2E_{inter}[f,g].
$$

For the energy, we recall the following estimates from \cite[Proposition 2.4]{AJY} or
\cite[Lemma 2.4]{ACJSW}.

\begin{lemma}
	\label{prop:Energy-X}
	Let $\omg, \Omega \in L^{2} \cap L^{1}_{*}(\bbR^2_+)$ and $\psi $ be the corresponding stream function of $\omg$. Then, we have 
	\begin{equation}\label{eq:E-inter-unsymm}
		\begin{split}
			\left|E_{inter}[\omg,\Omega]\right| \le \frac12   \nrm{x_{2}^{-1} {\psi}}_{L^{\infty}(\mathbb{R}^2_+)}\nrm{\Omega}_{L^{1}_{*}(\bbR^2_+)} 
		\end{split}
	\end{equation} 
	and\begin{equation}\label{eq:Energy-difference-X}
		\begin{split} 
			|E[\omg] - E[\Omega]| \le C(\nrm{\omg-\Omega}_{L^{1}_{*}(\bbR^2_+)} \nrm{\omg-\Omega}_{L^{2}(\bbR^2_+)}\nrm{\omg+\Omega}_{L^{1}_{*}(\bbR^2_+)} \nrm{\omg+\Omega}_{L^{2}(\bbR^2_+)})^{1/2}.\end{split}		\end{equation}
\end{lemma}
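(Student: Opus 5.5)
Both inequalities in Lemma~\ref{prop:Energy-X} follow directly from the representation $\psi = G[\omg]$ in \eqref{eq:psi-G} together with the sharp energy inequality \eqref{eq: SEI} of Proposition~\ref{prop:en_eq}. For \eqref{eq:E-inter-unsymm}, I write
\begin{equation*}
E_{inter}[\omg,\Omega]=\frac12\int_{\bbR^2_+}\psi(y)\Omega(y)\,dy .
\end{equation*}
This uses the symmetry of $\bfG$ and Fubini, which is justified because $\bfG\ge0$ and, by Lemma~\ref{lem:vel-decay} with $\alp=1/2$, the double integral converges absolutely for $\omg,\Omega\in L^2\cap L^1_*$. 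I then insert $\psi(y)=y_2\cdot(y_2^{-1}\psi(y))$ and bound
\begin{equation*}
|\psi(y)\Omega(y)|\le \nrm{x_2^{-1}\psi}_{L^\infty}\,y_2|\Omega(y)| ,
\end{equation*}
which yields \eqref{eq:E-inter-unsymm} after integration. If $\nrm{x_2^{-1}\psi}_{L^\infty}$ happens to be infinite, the inequality is trivial, so no finiteness needs to be checked.

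For \eqref{eq:Energy-difference-X}, I use bilinearity and symmetry of $E_{inter}$:
\begin{equation*}
E[\omg]-E[\Omega]=E_{inter}[\omg,\omg]-E_{inter}[\Omega,\Omega]=E_{inter}[\omg-\Omega,\omg+\Omega],
\end{equation*}
since the cross terms cancel. With $f=\omg-\Omega$ and $g=\omg+\Omega$, it therefore suffices to show that
\begin{equation*}
|E_{inter}[f,g]|\le C\bigl(\nrm f_{L^1_*}\nrm f_{L^2}\nrm g_{L^1_*}\nrm g_{L^2}\bigr)^{1/2}.
\end{equation*}

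To prove this bound, I note that $\bfG$ is the Green's function of $-\lap$ with Dirichlet data, so it is a positive definite kernel. Consequently $(f,g)\mapsto 2E_{inter}[f,g]=\int f\,(-\lap)^{-1}g$ is a positive semidefinite symmetric bilinear form; formally it equals $\int \nb\psi_f\cdot\nb\psi_g$. Cauchy--Schwarz for this form gives
\begin{equation*}
|E_{inter}[f,g]|\le E[f]^{1/2}E[g]^{1/2}.
\end{equation*}
Applying \eqref{eq: SEI} to each factor gives $E[f]\le C_L\nrm f_{L^2}\nrm f_{L^1_*}$, and similarly for $g$; this gives the claim with $C=C_L$. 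Proposition~\ref{prop:en_eq} is stated for all of $L^2\cap L^1_*$ with no sign condition, so it applies to $f$ and $g$ as they stand.

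The only point needing care is the justification of positive semidefiniteness and of the identity $2E[f]=\int|\nb\psi_f|^2$ for $f\in L^2\cap L^1_*$, which may lack $L^1$ or $L^\infty$ integrability. I would handle this by first proving the inequality for $f,g\in C_c^\infty(\bbR^2_+)$, where integration by parts is legitimate because $\psi$ vanishes on the boundary and decays like $x_2|x|^{-2}$. I would then pass to the limit by density, using the continuity of $E_{inter}$ in $L^2\cap L^1_*$. That continuity itself follows from the polarization bound just proved, applied to differences of approximants.
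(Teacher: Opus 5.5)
The paper gives no proof of this lemma; it quotes it from \cite{AJY} (Proposition 2.4) and \cite{ACJSW} (Lemma 2.4), so your argument has to stand on its own. Your proof of \eqref{eq:E-inter-unsymm} is fine. The main line for \eqref{eq:Energy-difference-X} is also correct and even gives $C=C_L$: the identity $E[\omg]-E[\Omega]=E_{inter}[\omg-\Omega,\omg+\Omega]$, then Cauchy--Schwarz for the positive semidefinite form $(f,g)\mapsto\iint f\,\bfG\,g$, then \eqref{eq: SEI}.

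\textbf{The density step is circular as written.} You have the polarization bound only for pairs in $C_c^\infty(\bbR^2_+)$, because that is where you proved positivity. Applied to differences of approximants, it shows only that $E_{inter}[f_n,g_n]$ is Cauchy. It does not show that the limit equals the double integral $E_{inter}[f,g]$, and that identification is exactly the continuity you are trying to derive.

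What closes the gap is an a priori bound on the double integral itself. This is the quantitative content of your first-part claim of absolute convergence. The kernel bound of Lemma \ref{lem:vel-decay} with $\alp=1/2$ does not give it by itself. You must combine $\bfG(x,y)\le C\sqrt{x_2y_2}\,|x-y|^{-1}$ with Hardy--Littlewood--Sobolev at exponent $4/3$, and with H\"older in the form $\int x_2^{2/3}|f|^{4/3}\,dx\le \nrm{f}_{L^1_*}^{2/3}\nrm{f}_{L^2}^{2/3}$. This yields
\[
\iint_{\bbR^2_+\times\bbR^2_+}|f(x)|\,\bfG(x,y)\,|g(y)|\,dxdy\le C\bigl(\nrm{f}_{L^1_*}\nrm{f}_{L^2}\nrm{g}_{L^1_*}\nrm{g}_{L^2}\bigr)^{1/2}.
\]
Hence the form is continuous on $L^2\cap L^1_*$, and positivity passes from $C_c^\infty$ to the limit.

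\textbf{The direct route is shorter.} The display above, with $f=\omg-\Omega$ and $g=\omg+\Omega$, already proves \eqref{eq:Energy-difference-X} outright. It needs no positivity, no Cauchy--Schwarz and no sharp inequality. This direct kernel estimate is elementary and self-contained, and an unspecified constant $C$ is all the paper ever uses. Your route buys the explicit constant $C_L$. In exchange, it relies on the nontrivial variational inequality \eqref{eq: SEI} and on a density step that needs the same kernel estimate anyway.
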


\subsection{Variational principle without sign condition}

We introduce the following admissible class for functions on $\bbR^2_+$,  for given $\kpp, \mu > 0$:  \begin{equation*}
	\begin{split}
		\widetilde{\calA}_{\kpp,\mu} := \left\{ \omg \mbox{ defined on } \bbR^2_+ \, : \, \nrm{\omg}_{L^2} \le \kpp,  \nrm{\omg}_{L^1_{*}} \le \mu \right\}.
	\end{split}
\end{equation*} We consider  the Lamb dipole $\omg_{Lamb}^{\kappa,\mu}$ in \eqref{eq:lamb-rescale-def} satisfying
$\nrm{\omg_{Lamb}^{\kappa,\mu}}_{L^2} = \kpp,  \nrm{\omg_{Lamb}^{\kappa,\mu}}_{L^1_{*}} =\mu$. 
As a corollary of Proposition \ref{prop:en_eq},  we note that it maximizes the kinetic energy in this class $\widetilde{\calA}_{\kpp,\mu}$:
$$
\max_{\omg\in \widetilde{\calA}_{\kpp,\mu}}E[\omg]=E[\omg_{Lamb}^{\kappa,\mu}]=C_L\cdot\kpp\cdot\mu.
$$

We present an extension of the compactness statement \cite[Proposition 1.3, Theorem 2.7]{AJY} which removes the sign condition.  
It is a reformulation of 
\cite[Theorems 1.3 and 1.5]{AC2019}.

\begin{proposition}\label{thm:AC2019}
	Let $\kpp,\mu>0$ 
	and $M > 0$.
	Then, for any $\varep_{1}>0$, there exists $\lambda_{1} = \lambda_{1}(\kpp,\mu, M,
	\varep_{1})>0$ such that the following holds: if 
	$\nrm{\omg}_{L^{1}(\bbR^2_+)} \le M$ satisfies  \begin{equation*}
		\begin{split}
			\omg \in \widetilde{\mathcal{A}}_{ \kpp +\lambda_{1}, \mu +\lambda_{1} } \qquad\mbox{and}\qquad E[\omg] \ge E[\omg_{Lamb}^{\kpp,\mu}] - \lambda_{1}, 
		\end{split}
	\end{equation*} then we have \begin{equation*}
		\begin{split}
			\min\left\{ \inf_{\tau\in\bbR} \nrm{ \omg - \omg_{Lamb}^{\kpp,\mu}(\cdot - \tau \mathbf{e}_{1}) }_{L^{2} \cap L^{1}_{*} }  \, , \,  \inf_{\tau\in\bbR} \nrm{ \omg +  \omg_{Lamb}^{\kpp,\mu}(\cdot - \tau \mathbf{e}_{1}) }_{L^{2} \cap L^{1}_{*} }  \right\} \le \varep_{1}.  
		\end{split}
	\end{equation*} 
\end{proposition}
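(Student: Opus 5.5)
We argue by contradiction combined with compactness of maximizing sequences. Suppose the statement fails for some $\varep_1>0$. Then there is a sequence $\omg_n$ with $\nrm{\omg_n}_{L^1}\le M$, $\nrm{\omg_n}_{L^2}\le \kpp+1/n$, $\nrm{\omg_n}_{L^1_*}\le \mu+1/n$ and $E[\omg_n]\ge C_L\kpp\mu-1/n$, but whose distance from both $\pm\omg^{\kpp,\mu}_{Lamb}$ orbits stays $\ge\varep_1$. By the sharp inequality \eqref{eq: SEI}, $E[\omg_n]\le C_L(\kpp+1/n)(\mu+1/n)$, so $E[\omg_n]\to C_L\kpp\mu$. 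It follows that $\nrm{\omg_n}_{L^2}\to\kpp$ and $\nrm{\omg_n}_{L^1_*}\to\mu$; otherwise the sharp inequality would give a strictly smaller limsup of energy. So $\{\omg_n\}$ is a maximizing sequence for the problem $\sup\{E[\omg]:\omg\in\widetilde{\calA}_{\kpp,\mu}\}$, up to a vanishing rescaling. We may first rescale each $\omg_n$ by the dipole scaling \eqref{eq:lamb-rescale-def} so that it lies exactly in $\widetilde{\calA}_{\kpp,\mu}$; the rescaling factors tend to $1$, and this changes the distances only by $o(1)$.

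Next we apply concentration-compactness in the form of \cite[Theorems 1.3 and 1.5]{AC2019}, which is stated there for energy maximizers in this sign-free admissible class. To rule out vanishing, we use Lemma \ref{prop:Energy-X} and Lemma \ref{lem:vel-decay}. Energy carried by vorticity far from the boundary, or spread thin, is small: truncating to $x_2\le R$ costs at most $\nrm{\cdot}_{L^1_*(x_2>R)}$ times a uniform bound on $\nrm{x_2^{-1}\psi}_{L^\infty}$. The $L^1$ bound $M$, together with $L^2$, supplies tightness of $|\omg_n|$ in the $x_2$ direction and controls the near-boundary region. To rule out dichotomy, we use the strict superadditivity $C_L\kpp\mu > C_L(\kpp_1\mu_1+\kpp_2\mu_2)$ whenever $\kpp_1^2+\kpp_2^2=\kpp^2$ and $\mu_1+\mu_2=\mu$ are both nontrivial splits. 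This holds because $\kpp_1\mu_1+\kpp_2\mu_2\le \max(\kpp_1,\kpp_2)\mu<\kpp\mu$. In addition, the interaction energy of separating pieces tends to $0$ by Lemma \ref{lem:vel-decay}. After $x_1$-translations we therefore obtain a subsequence converging weakly in $L^2$ to some $\omg$. Compactness of $\rho\mapsto E[\rho]$ on the tight, bounded set gives $E[\omg]=C_L\kpp\mu$, and weak lower semicontinuity gives $\omg\in\widetilde{\calA}_{\kpp,\mu}$.

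By the uniqueness part of Proposition \ref{prop:en_eq}, $\omg=\pm\omg^{\kpp,\mu}_{Lamb}(\cdot-\tau\bfe_1)$, and both constraints are saturated. Since $\nrm{\omg_n}_{L^2}\to\kpp=\nrm{\omg}_{L^2}$, weak convergence upgrades to strong $L^2$ convergence. For $L^1_*$, we use the fact that $|\omg_n|\rightharpoonup|\omg|$ along the subsequence and $\nrm{\omg_n}_{L^1_*}\to\nrm{\omg}_{L^1_*}$. The Brezis--Lieb type splitting, together with a.e. convergence extracted from strong $L^2$ convergence, gives $\nrm{\omg_n-\omg}_{L^1_*}\to0$. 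This contradicts the assumption that the distances stay $\ge\varep_1$.

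The main obstacle is the no-vanishing and tightness step without a sign condition. There, positive and negative parts could cancel in $\psi$, and the $L^1_*$ mass could sit at large $x_2$ with low energy. Rather than reprove it, I would verify carefully that the hypotheses of \cite[Theorems 1.3, 1.5]{AC2019} (which use $|\omg|$ in $L^1_*$) match, with the $L^1$ bound $M$ included for the tightness needed there.
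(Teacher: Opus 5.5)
Your argument is complete only as a citation-based proof. The part you try to justify yourself does not work, and the paper avoids it with a short reduction.

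\textbf{What is fine, and where the gap sits.} The outer argument is correct: the forced convergence $\nrm{\omg_n}_{L^2}\to\kpp$, $\nrm{\omg_n}_{L^1_*}\to\mu$, the dipole rescaling, the identification of an energy-maximizing limit as $\pm\omg^{\kpp,\mu}_{Lamb}(\cdot-\tau\bfe_{1})$ via the sign-free uniqueness in Proposition~\ref{prop:en_eq}, and the upgrade to strong convergence (norm convergence in $L^2$; a.e.\ convergence plus Brezis--Lieb in $L^1_*$). But the only step where the missing sign condition matters is no-vanishing/tightness, and there your proof is just the citation of \cite{AC2019}. That citation is defensible, since the paper itself calls the proposition a reformulation of \cite[Theorems 1.3 and 1.5]{AC2019}, provided you check that their normalized constraints and their use of the $L^1$ bound match the present statement.

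\textbf{Why your sketch of that step fails.} The heuristics you offer in its place would not work:
\begin{itemize}
\item The hypotheses give no uniform bound on $\nrm{x_2^{-1}\psi}_{L^\infty}$. Lemma~\ref{lem:vel-L-infty} needs an $L^\infty$ (or $L^p$, $p>2$) bound, and only $L^1\cap L^2\cap L^1_*$ control is assumed.
\item Even with such a bound, the truncation cost is multiplied by $\nrm{\omg_n}_{L^1_*(\{x_2>R\})}$, which nothing makes small uniformly in $n$. Upward escape of $L^1_*$ mass is exactly what must be excluded, and it is excluded by near-maximality, not by a truncation estimate.
\item $L^1$ plus $L^2$ bounds give no tightness in $x_2$. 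The $L^1_*$ bound gives it for the $L^1$ mass, and the $L^1$ bound only controls $L^1_*$ mass near the boundary, via $\int_{\{x_2<r\}}x_2|\omg|\,dx\le rM$.
\end{itemize}

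\textbf{The paper's route.} The paper never needs sign-free concentration-compactness. It applies the very inequality you use against dichotomy, $\kpp_1\mu_1+\kpp_2\mu_2\le\max(\kpp_1,\kpp_2)\mu$, to the sign splitting $\omg_n=\omg_n^{(+)}-\omg_n^{(-)}$ instead of a spatial one.
\begin{itemize}
\item Since $\bfG>0$, we have $E[\omg_n]=E[\omg_n^{(+)}]+E[\omg_n^{(-)}]-2E_{inter}[\omg_n^{(+)},\omg_n^{(-)}]\le E[\omg_n^{(+)}]+E[\omg_n^{(-)}]$. So the cancellation in $\psi$ you worry about can only lower the energy.
\item The two parts have disjoint supports, so their squared $L^2$ norms and their $L^1_*$ norms add.
\item Pass to a subsequence and replace $\omg_n$ by $-\omg_n$ if necessary; this is where the $\pm$ comes from. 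Then $\nrm{\omg_n^{(-)}}_{L^2}\le(\kpp+1/n)/\sqrt2$.
\item Applying the sharp inequality to each part gives $C_L\kpp\mu\le C_L\kpp(\mu-\nu)+C_L\kpp\nu/\sqrt2$ with $\nu=\liminf_n\nrm{\omg_n^{(-)}}_{L^1_*}$. Hence $\nu=0$.
\item Then $\nrm{\omg_n^{(+)}}_{L^2}\to\kpp$ along a subsequence, which forces $\nrm{\omg_n^{(-)}}_{L^2}\to0$.
\item The nonnegative compactness result \cite[Proposition 1.3]{AJY}, applied to $\omg_n^{(+)}$, finishes the proof.
\end{itemize}

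\textbf{What each route buys.} The paper's route adds only a few lines to the signed theory. It needs neither sign-free uniqueness nor your weak-to-strong upgrade. Your route treats both signs symmetrically and would be more direct if a sign-free compactness theorem is available, but the whole argument then rests on that black box.
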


\begin{proof}
	We instead show the following. For any sequence $\{ \omg_{n} \}_{n\ge 0}$ satisfying $ \omg_{n} \in \widetilde{\mathcal{A}}_{\kpp+(1/n),\mu+(1/n)}$,  $\nrm{\omg_n}_{L^{1}} \le M$,  and $E[\omg_{n}] \geq  E[\omg_{Lamb}^{\kpp,\mu}]-(1/n)$ for each $n$, there is a subsequence $\{ \omg_{n_{k}} \}$ that $\omg_{n_{k}}$ converges to either $\omg_{Lamb}^{\kpp,\mu}$ or $(-\omg_{Lamb}^{\kpp,\mu})$ in $L^{2} \cap L^1_*$ after shifts. 
	
	
	We decompose $\omg_{n} = \omg_{n}^{(+)} - \omg_{n}^{(-)}$ and note that \begin{equation*}
		\begin{split}
			E[\omg_n] = E[\omg_{n}^{(+)}] + E[\omg_{n}^{(-)}] - 2\underbrace{E_{inter}[\omg_{n}^{(+)},\omg_{n}^{(-)}]}_{\geq 0} \le E[\omg_{n}^{(+)}] + E[\omg_{n}^{(-)}]. 
		\end{split}
	\end{equation*}
	Assume without loss of generality that (by taking a subsequence if necessary) $\nrm{\omg_{n}^{(+)}}_{L^{2}} \ge \nrm{\omg_{n}^{(-)}}_{L^{2}}$, which implies that $\nrm{\omg_{n}^{(-)}}_{L^{2}} \le (\kpp+(1/n))/\sqrt{2}$. Next, towards a contradiction, assume that $$\liminf_{n\to\infty}\nrm{\omg_{n}^{(-)} }_{L^1_*} =: \nu > 0.$$ Then along the subsequence achieving $\liminf$, we obtain  \begin{equation*}
		\begin{split}
			C_{L}\kpp\mu \le C_{L}\kpp(\mu-\nu) + \frac{C_{L}}{\sqrt{2}}\kpp\nu
		\end{split}
	\end{equation*} in the limit, which is a contradiction. Therefore, $\nu = 0$. This gives that \begin{equation*}
		\begin{split}
			C_{L}\kpp\mu \le \limsup_{n\to\infty} C_{L}\nrm{\omg_{n}^{(+)}}_{L^{2}} \mu  
		\end{split}
	\end{equation*} and in particular $\nrm{\omg_{n}^{(-)}}_{L^{2} \cap L^1_*} \to 0$ as $n\to\infty$ with $E[\omg_{n}^{(+)}] \to C_{L}\kpp\mu$ (up to a subsequence). Since $0 \le \omg_n^{(+)} \in \widetilde{\mathcal{A}}_{\kpp,\mu}$, we can use  \cite[Proposition 1.3]{AJY} so that there is a subsequence $\omg_{n_{k}}^{(+)} \to \omg_{Lamb}^{\kpp,\mu}$ in $L^{2} \cap L^1_*$ after shifts. So does $\omg_{n_{k}}$.	\end{proof}

\subsection{Stability from $L^2$ to $L^1$} 
\begin{lemma}\label{lem:L2-L1}
	Suppose $f, g\in (L^1\cap L^2)(\mathbb{R}^2_+)$ with
	$\|f\|_{L^1}\leq\|g\|_{L^1}$. Then we have
	\begin{equation}\label{L2_to_L1}
		\|f-\omega_{Lamb}\|_{L^1}\leq 4\|f-\omega_{Lamb}\|_{L^2}+\|g-\omega_{Lamb}\|_{L^1}. 
	\end{equation} 
\end{lemma}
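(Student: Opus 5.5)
The plan is to split $\bbR^2_+$ into the support of the Lamb dipole, $B := B_1^+$, and its complement. On $B$ we will control the $L^1$ norm by the $L^2$ norm via Cauchy--Schwarz. Outside $B$ we will use the $L^1$ comparison $\nrm{f}_{L^1}\le\nrm{g}_{L^1}$ to transfer mass bounds. The only facts about $\omg_{Lamb}$ needed are that it vanishes outside $B$ (by \eqref{eq:Lamb-def}) and that $|B|=\pi/2$.

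\emph{Step 1 (inside $B$).} By Cauchy--Schwarz on the set $B$ of finite measure,
\begin{equation*}
\nrm{f-\omg_{Lamb}}_{L^1(B)} \le |B|^{1/2}\nrm{f-\omg_{Lamb}}_{L^2} = \sqrt{\pi/2}\,\nrm{f-\omg_{Lamb}}_{L^2}.
\end{equation*}

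\emph{Step 2 (outside $B$).} Since $\omg_{Lamb}=0$ on $B^c$, we have $\nrm{f-\omg_{Lamb}}_{L^1(B^c)}=\nrm{f}_{L^1(B^c)} = \nrm{f}_{L^1}-\nrm{f}_{L^1(B)}$. By hypothesis and the triangle inequality,
\begin{equation*}
\nrm{f}_{L^1}\le \nrm{g}_{L^1}\le \nrm{\omg_{Lamb}}_{L^1}+\nrm{g-\omg_{Lamb}}_{L^1}.
\end{equation*}
Again by the triangle inequality, using $\nrm{\omg_{Lamb}}_{L^1}=\nrm{\omg_{Lamb}}_{L^1(B)}$,
\begin{equation*}
\nrm{f}_{L^1(B)}\ge \nrm{\omg_{Lamb}}_{L^1}-\nrm{f-\omg_{Lamb}}_{L^1(B)}.
\end{equation*}
Subtracting the second bound from the first, the terms $\nrm{\omg_{Lamb}}_{L^1}$ cancel, and we obtain
\begin{equation*}
\nrm{f}_{L^1(B^c)}\le \nrm{g-\omg_{Lamb}}_{L^1}+\nrm{f-\omg_{Lamb}}_{L^1(B)}.
\end{equation*}

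\emph{Step 3 (combine).} Adding the two regions and applying Step 1 twice gives
\begin{equation*}
\nrm{f-\omg_{Lamb}}_{L^1}\le 2\nrm{f-\omg_{Lamb}}_{L^1(B)}+\nrm{g-\omg_{Lamb}}_{L^1}\le \sqrt{2\pi}\,\nrm{f-\omg_{Lamb}}_{L^2}+\nrm{g-\omg_{Lamb}}_{L^1}.
\end{equation*}
Since $\sqrt{2\pi}<4$, this yields \eqref{L2_to_L1}.

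There is no real obstacle here. The only point requiring care is the bookkeeping in Step 2, where the total mass of $\omg_{Lamb}$ cancels exactly. This cancellation is what lets the exterior mass of $f$ be bounded without any sign or support information on $f$.
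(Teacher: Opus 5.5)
Your proof is correct and follows essentially the same route as the paper: split at $B_1^+$, bound the exterior mass of $f$ via $\nrm{f}_{L^1}\le\nrm{g}_{L^1}$ and the triangle inequality so that $\nrm{\omg_{Lamb}}_{L^1}$ cancels, then apply Cauchy--Schwarz on $B_1^+$ to get the constant $\sqrt{2\pi}<4$. The paper arrives at the same intermediate inequality $\nrm{f-\omg_{Lamb}}_{L^1}\le 2\nrm{f-\omg_{Lamb}}_{L^1(B_1^+)}+\nrm{g-\omg_{Lamb}}_{L^1}$ and then uses H\"older.
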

\begin{proof}
	We recall $B^+_1=\{x\in\mathbb{R}^2_+\,:\,|x|<1\}$, which is the support of $\omega_{Lamb}$. 
	We compute
	\begin{equation*}\begin{split}
			\|f-\omega_{Lamb}\|_{L^1}&
			\leq \|f-\omega_{Lamb}\|_{L^1(B^+_1)}+\|f\|_{L^1(\mathbb{R}^2_+\setminus B^+_1)}\\ 
			&= \|f-\omega_{Lamb}\|_{L^1(B^+_1)}+\|f\|_{L^1}-\|f\|_{L^1(B^+_1)} \\ 
			&\leq \|f-\omega_{Lamb}\|_{L^1(B^+_1)}+\|g\|_{L^1} -\|f\|_{L^1(B^+_1)}\\ 
			&\leq \|f-\omega_{Lamb}\|_{L^1(B^+_1)}+\|g-\omega_{Lamb}\|_{L^1} +\|\omega_{Lamb}\|_{L^1} -\|f\|_{L^1(B^+_1)}\\ 
			&\leq 2\|f-\omega_{Lamb}\|_{L^1(B^+_1)}+\|g-\omega_{Lamb}\|_{L^1}\\ 
			&\leq 4\|f-\omega_{Lamb}\|_{L^2}+\|g-\omega_{Lamb}\|_{L^1},
		\end{split}
	\end{equation*} where we used the H\"older inequality in the last line.
\end{proof}







\section{Bootstrap hypotheses}\label{sec:Boot}

This section is organized as follows. To begin with, in \S \ref{subsec:redu}, we reduce the proof to the case of $t\ge0$ (rather than $t\in\bbR$) and for sufficiently regular data $\omg_0\in H^{10}(\mathbb{R}^2_+)$ (say). In \S \ref{subsec:ini}, we recall the assumptions on the initial data and obtain several immediate consequences. We systematically introduce all the parameters in the proof in \S \ref{subsec:params}. Then in \S \ref{subsec:bootstrap}, we introduce a moving frame and fix the set of bootstrap hypotheses. For this purpose, several parameters are introduced, and the bootstrap hypotheses are stated in terms of a decomposition of the solution into ``main'' part and remainder, in the moving frame. In \S \ref{subsec:bootstrap-short}, we show that these hypotheses are valid at least for a short time interval containing $t = 0$. Consequences of the bootstrap hypotheses are then collected in the next section \S \ref{sec:conseq}.  

\subsection{Reductions}\label{subsec:redu} First, we recall the following symmetry of 2D Euler equations \eqref{eq:2D-Euler}: if $\omg(t,x)$ is a solution, then $\omg^*(t,x):=\omg(-t,-x_{1},x_{2})$ is also a solution of \eqref{eq:2D-Euler}. Since the Lamb dipole is even symmetric in $x_{1}$, for any initial data $\omg_{0}$ satisfying the assumptions of Theorem \ref{thm:main}, the even reflection $\omg_{0}(-x_{1},x_{2})$ also satisfies all of the assumptions. Therefore, owing to the symmetry, it suffices to prove Theorem \ref{thm:main} only for $t \ge 0$. 

Next, we will prove the theorem for smooth $\omega_0$ during which we obtain the estimates $|\dot{p}_1(t) - 1| \le \sqrt{\varepsilon}$ for all time and $|p_1(0)| \le \sqrt{\varepsilon}$. For a general $\omega_0$, the result follows by a standard approximation argument. Indeed, an application of the Arzela--Ascoli theorem yields a Lipschitz continuous function $p_1$ that satisfies the identical estimates.

\subsection{Consequences of initial assumptions}\label{subsec:ini} Here, we collect some immediate consequences of the assumptions  \eqref{eq:mainthm-assumptions} and
\eqref{assump_moment}
on the initial data $\omg_{0}(x)$, which we recall here for convenience: with given $A> 0$ and with $\dlt>0$, which will be determined later, \begin{itemize}
	\item $\nrm{\omg_{0} }_{ L^{\infty}(\ohp)} \le A $,
	\item $\int_{ \ohp\setminus B^+_1}|x||\omg_0(x)|dx<\dlt$, 
	\item $\nrm{\omg_{0} -  {\omg}_{Lamb}}_{L^{2} \cap L^{1}_{*}(\ohp)} < \dlt$, 
\end{itemize} 
We may assume $A \ge 1000
$ and $\dlt<1$ so that  \begin{equation}
	\begin{split}\nrm{\omg_{0} }_{ L^{1}(\ohp)}&\leq
		\nrm{\omg_{0} -{\omg}_{Lamb}}_{ L^{1}(B^+_1)}+\nrm{{\omg}_{Lamb} }_{ L^{1}(B^+_1)}+\nrm{\omg_{0} }_{ L^{1}(\ohp\setminus B^+_1 )}\\&\leq
		\sqrt{\pi/2}\nrm{\omg_{0} -{\omg}_{Lamb}}_{ L^{2}(B^+_1)}+\sqrt{\pi/2}\nrm{{\omg}_{Lamb} }_{ L^{2}(B^+_1)}+\nrm{|x|\omg_{0} }_{ L^{1}(\ohp\setminus B^+_1)}\le A. \end{split}\end{equation}
\subsubsection{Positive and negative parts}
For each $t \ge 0$, we decompose the unique Yudovich solution $\omg(t,x)$ corresponding to $\omg_{0}(x)$ into the positive and negative parts \begin{equation*}
	\begin{split}
		\omg(t,\cdot) = \omg^{(+)}(t,\cdot) - \omg^{(-)}(t,\cdot)
	\end{split}
\end{equation*} where $\omg^{(+)}(t,\cdot) := \omg(t,\cdot) \, \mathbf{1}_{ \{ \omg(t,\cdot) \ge 0\} } \geq 0$ so that $\omg^{(-)}(t,\cdot) \ge 0$ everywhere on $\bbR^2_+$ as well.
We note the following pointwise inequality due to nonnegativity of $\omg_{Lamb}$:
$$|\omg^{(+)}(t,x)-\omg_{Lamb}|\leq|\omg(t,x)-\omg_{Lamb}(x)|$$ which implies, by \eqref{eq:mainthm-assumptions},
$$\nrm{\omg_{0}^{(+)} -  {\omg}_{Lamb}}_{L^{2} \cap L^{1}_{*}(\ohp)} < \dlt.$$
For $\omg_{0}^{(-)}$, we observe
\begin{equation*}
	\begin{split}
		\nrm{\omg_{0}-  {\omg}_{Lamb}}_{ L^{1}_{*}}  = \int_{ \bbR^2_+ \backslash \supp( \omg_{0}^{(-)} ) } x_{2}|\omg_{0}^{(+)}(x)-  {\omg}_{Lamb} (x)| \, dx + \int_{  \supp( \omg_{0}^{(-)} ) } x_{2}(\omg_{0}^{(-)}(x)+  {\omg}_{Lamb} (x)) \, dx,
	\end{split}
\end{equation*}
which gives $\nrm{\omg_{0}^{(-)} }_{ L^{1}_{*}(\bbR^2_+)} < \dlt$. 
Similarly, we have 
\begin{equation*}
	\begin{split}
		\nrm{\omg_{0}-  {\omg}_{Lamb}}_{ L^{2}}^{2} = \int_{ \bbR^2_+ \backslash \supp( \omg_{0}^{(-)} ) } (\omg_{0}^{(+)}(x)-  {\omg}_{Lamb} (x))^{2} \, dx + \int_{  \supp( \omg_{0}^{(-)} ) } (\omg_{0}^{(-)}(x)+  {\omg}_{Lamb} (x))^{2} \, dx,
	\end{split}
\end{equation*}
which gives  $\nrm{\omg_{0}^{(-)} }_{ L^{2}(\bbR^2_+)} < \dlt.$ 
For the $L^1$-norm, we use \eqref{assump_moment} to get
\begin{equation*}
	\begin{split}
		\|\omg_0^{(\pm)}\|_{L^1(\ohp\setminus B^+_1)} &\leq  \|\omg_0\|_{L^1(\ohp\setminus B^+_1)}\, dx \leq  \int_{ \ohp\setminus B^+_1 } |x||\omg_{0}| \,dx \leq  \dlt,
	\end{split}
\end{equation*}
\begin{equation*}
	\begin{split}
		\|\omg_0^{(+)}-\omg_{Lamb}\|_{L^1}\leq	\|\omg_0-\omg_{Lamb}\|_{L^1} &\leq \sqrt{\pi/2}\|\omg_0-\omg_{Lamb}\|_{L^2(B_1^+)}+\|\omg_0\|_{L^1(\ohp\setminus B_1^+)}\leq 3\delta,
	\end{split}
\end{equation*} 
and
\begin{equation*}
	\begin{split}
		\|\omg_0^{(-)}\|_{L^1}=\|\omg_0^{(-)}\|_{L^1(B^+_1 )}
		+\|\omg_0^{(-)}\|_{L^1(\ohp\setminus B^+_1)} &\leq  \int_{ B^+_1  } |\omg_{0}(x)-  {\omg}_{Lamb} (x)| \, dx + \int_{ \ohp\setminus B^+_1 } |\omg_{0}| \, dx\\
		& \leq \sqrt{\pi/2}\nrm{\omg_{0}-  {\omg}_{Lamb}}_{ L^{2}(B^+_1 )} + \delta \, < 3 \dlt.
	\end{split}
\end{equation*}
As a corollary, we have
\begin{lemma}\label{lem:minus-small}
	For all $t\ge 0$, we have  \begin{equation}\label{eq:L2-neg-small}
		\begin{split}
			\nrm{\omg^{(-)}(t,\cdot) }_{ L^{2}(\bbR^2_+)} < \dlt \qquad \mbox{and}\qquad\nrm{\omg^{(-)}(t,\cdot) }_{ L^{1}(\bbR^2_+)} <  3\dlt. 
		\end{split}
	\end{equation}
\end{lemma}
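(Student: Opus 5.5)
The plan is to use that the 2D Euler flow transports vorticity by a measure-preserving homeomorphism, so the distribution function of $\omg(t,\cdot)$ equals that of $\omg_0$ for all $t\ge0$. The negative part $\omg^{(-)}(t,\cdot)$ is a function of $\omg(t,\cdot)$, namely $\omg^{(-)}=\max\{-\omg,0\}$. Its $L^p$ norms are therefore preserved in time: $\nrm{\omg^{(-)}(t)}_{L^p}=\nrm{\omg_0^{(-)}}_{L^p}$ for $p=1,2$. The lemma then follows directly from the initial-time bounds computed just above, namely $\nrm{\omg_0^{(-)}}_{L^2}<\dlt$ and $\nrm{\omg_0^{(-)}}_{L^1}<3\dlt$.

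In more detail, I first recall that $\omg_0\in L^1\cap L^\infty(\bbR^2_+)$, so there is a unique Yudovich solution. Its velocity is log-Lipschitz, and it generates a flow map $\Phi(t,\cdot)$ that is a measure-preserving homeomorphism of $\bbR^2_+$. The boundary is preserved because $u_2=0$ on $\{x_2=0\}$. The solution satisfies $\omg(t,\Phi(t,x))=\omg_0(x)$. Composing with the continuous function $s\mapsto s_-=\max\{-s,0\}$ gives $\omg^{(-)}(t,\Phi(t,x))=\omg_0^{(-)}(x)$. A change of variables with Jacobian one then yields $\int|\omg^{(-)}(t)|^p\,dx=\int|\omg_0^{(-)}|^p\,dx$.

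If one prefers to avoid the Lagrangian representation at Yudovich regularity, there is an alternative. Section \ref{subsec:redu} already reduces to smooth data, and for smooth solutions the flow map is a smooth volume-preserving diffeomorphism, so the argument is classical. For general data one passes to the limit using $L^p$ stability of Yudovich solutions, and the strict inequalities survive because the approximating data can be chosen with norms still strictly below $\dlt$ and $3\dlt$.

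There is no real obstacle here. The only point needing care is justifying the Lagrangian transport identity and the measure preservation of the flow in the Yudovich class, which is standard.
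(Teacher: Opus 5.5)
Your proposal is correct and follows essentially the same argument as the paper: the negative part is transported by the area-preserving flow map, $\omg^{(-)}(t,\phi(t,x))=\omg_0^{(-)}(x)$, so its $L^1$ and $L^2$ norms are conserved, and the lemma follows from the initial bounds $\nrm{\omg_0^{(-)}}_{L^2}<\dlt$ and $\nrm{\omg_0^{(-)}}_{L^1}<3\dlt$. Your approximation remark is not needed, since this conservation already holds directly for Yudovich solutions.
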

\begin{proof}
	We observe that, if we denote the flow map from $u=K[\omg]$ by $\phi$, then \begin{equation*}
		\begin{split}
			\omg^{(-)}(t,\phi(t,x)) = \omg_{0}^{(-)}(x) 
		\end{split}
	\end{equation*} so that $	\nrm{\omg^{(-)}(t,\cdot) }_{ L^{p}(\bbR^2_+)}  = 	\nrm{\omg_0^{(-)} }_{ L^{p}(\bbR^2_+)}$, since $\phi(t,\cdot)$ is area-preserving. \end{proof}

\subsection{Parameters}\label{subsec:params}
Before proceeding with the main argument, we clarify the hierarchy and dependencies of the parameters used throughout the proof.  

First, we fix the absolute   constants  
\begin{equation}\label{defn_C_0}
	R := 100000\quad \mbox{and}\quad C_0 := 10R.
\end{equation}
We also denote the Lipschitz constant for the Lamb dipole $\omega_{Lamb}$:  
\begin{equation}\label{defn_C_Lip}
	C_{Lip}:=\|\nabla \omg_{Lamb}\|_{L^\infty}\in(35,40) 
\end{equation}
and the total mass of the dipole
\begin{equation}\label{defn_m_L}
	m_L:=\int_{\mathbb R_+^2} \omega_{Lamb} (x)  dx\simeq 6.831>0.
\end{equation}
Based on these, we define an absolute constant 
\begin{equation}\label{defn_M_0}
	M_0 := 500\cdot\left(\frac{10C_{Lip}}{m_L}+1\right)\cdot (12R).
\end{equation}

Next, we introduce large UNIVERSAL constants $C_1, C_2, C_3, C_4 > 1$. Their exact values are not determined immediately but will be chosen to be sufficiently large during the course of the proof. With these constants in hand, our main parameters $A, \varep, \lambda$, and $\dlt$ are chosen sequentially:

\begin{itemize}
	\item Let $A \ge 1000$ be given. Without loss of generality, we may shrink $\varep > 0$ to satisfy 
	\begin{equation}\label{small_varep}
		C_1\varep \leq 1/A.
	\end{equation}    
	We also choose a large height parameter $H\geq A$ such that 
	\begin{equation}\label{large_H}
		H \geq C_2 A.\end{equation}       
	
	\item Next, we choose an intermediate parameter $\lambda \in(0,\varep^2/(AH)^3)$ small enough to satisfy
	\begin{equation}\label{small_lambda}
		C_3 \lambda \leq \varep^2 / (AH)^3 \quad \mbox{and} \quad
		\lambda \leq \lambda_1,\end{equation}       
	where\footnote{From a recent work \cite{LSZ2026}, it turned out that $\lambda_1$ 
		can be chosen explicitly in terms of $\varep$
		(after modulating the Lamb dipole), while this fact is not needed for our result.} $$ \lambda_1:=\lambda_1(\kappa,\mu,M,\varep_1)|_{\kappa=\kappa[\omega_{Lamb}], \, \mu=\mu[\omega_{Lamb}],\,M=10A, \,\varep_1=\varep/M_0}$$ is obtained from Proposition \ref{thm:AC2019}.
	\item Finally, we choose our ultimate parameter $\dlt \in(0,\lambda/AH)$ small enough to satisfy
	\begin{equation}\label{small_dlt}
		C_4\delta \leq \lambda/AH.\end{equation}      
	This completes the parameter selection.
\end{itemize}

In summary, given $A \ge 1000$, the parameters are chosen sequentially to satisfy the following:
\begin{equation*}\begin{split}
		\mbox{Choose}\quad H\gg_{C_2} A,\quad \varep \ll_{C_1} \frac{1}{A},\quad \lambda\ll_{C_3} \frac{\varep^2}{(AH)^3},\quad\mbox{and}\quad  \dlt\ll_{C_4} \frac{\lambda}{AH}.
\end{split}\end{equation*}
\subsection{Moving frame and decomposition}\label{subsec:bootstrap} It will be convenient to work in a moving frame, so that the ``Lamb dipole part'' of the solution is always centered at the origin. To this end, we use the idea from \cite{JYZ}: fix a smooth function $g:\bbR\to\bbR$ satisfying \begin{equation*}
	g(x_1) = \left\{
	\begin{aligned}
		x_1, \quad & |x_1| \le 3, \\
		4, \quad & x_1 \ge 4,  \\
		-4, \quad & x_{1} \le -4
	\end{aligned}
	\right.
\end{equation*} and $g'(x_{1}) \ge 0$, $g(x_1) = -g(-x_1)$ for every $x_{1}$. We may require that $ \nrm{g}_{L^\infty(\mathbb R)}\leq 4$ and $ \nrm{g'}_{L^\infty(\mathbb R)}\leq 2$. We define $h(t,p)$ by \begin{equation}\label{eq:h}
	\begin{split}
		h(t,p) := 	\int_{\bbR^2_+} \omg^{(+)}(t,x) g(x_{1} -p) dx 
	\end{split}
\end{equation} and define $p_{1}(t)$ by the property $h(t,p_{1}(t)) = 0$, namely\footnote{{We note that for every $(t, p)$, $\rd_{p}h \le 0 $ by $\omg^{(+)} \ge 0$ and $g' \ge 0$. This is why we define $h(t,p)$ using $\omg^{(+)}$ and not $\omg$. 
		We note $h(t,p) < 0$ as $p\to\infty$ and $h(t,p) > 0$ as $p\to-\infty$. By the intermediate value theorem, for each $t$ there is   a   $p_{1}(t)$ satisfying \eqref{eq:p1-def}. However, uniqueness and continuity in time of $p_{1}(t)$ is not trivial at this stage, and we prove these properties with a bootstrap scheme in the following sections.}}  \begin{equation}\label{eq:p1-def}
	\begin{split}
		\int_{\bbR^2_+} \omg^{(+)}(t,x) g(x_{1} - p_{1}(t)) dx = 0. 
	\end{split}
\end{equation}  As a part of our bootstrap assumptions, it will be shown that such a $p_1(t)$ is unique for each $t$, and it is a differentiable function of time  with  \begin{equation}\label{eq:p1-def2}
	\begin{split}
		|\dot{p}_{1}(t) - 1| \le \varepsilon^{1/2}. 
	\end{split}
\end{equation}
Assuming that $p_{1}(t)$ is given as a differentiable function, we make the following change of variables:  \begin{equation}\label{eq:rho-def}
	\begin{split}
		\rho(t,x) := \omg(t, x + p_{1}(t)\bfe_{1}). 
	\end{split}
\end{equation}
Then, $\rho$ is a solution of   \begin{equation*}
	\left\{
	\begin{aligned}
		& \rd_{t}\rho + v\cdot\nb \rho = 0, \\
		& v =K[\rho] - \dot{p}_{1}(t)\bfe_{1}. 
	\end{aligned}
	\right.
\end{equation*}

From now on, we denote $\Phi = (\Phi_1,\Phi_2)$ as the flow map defined on the half-plane with the velocity $v$ (as long as $p_1$ is well-defined,	
): namely, for each $x \in \bbR^2_+$, we have
\begin{equation*}
	\begin{split}
		\frac{d}{dt} \Phi(t,x)  = v(t,\Phi(t,x)), \qquad \Phi(0,x) = x. 
	\end{split}
\end{equation*}
This flow map is obtained by appropriately shifting the flow map associated with the original velocity $u$ in the $x_{1}$ axis.  For each $t\ge0$, we denote the inverse of $\Phi(t,\cdot): \bbR^2_+ \to \bbR^2_+$ by $\Phi_{t}^{-1}$.
With the intermediate parameter $\lambda>0$, we now decompose $\rho$ into the ``quadrant part'' $\rho_{Q}$ and the small remainder $\tld{\rho}$, and decompose the velocity correspondingly:
\begin{equation*}
	\begin{split}
		\rho = \rho_{Q} + \tld{\rho}, \qquad v = v_{Q} + \tld{v} - \dot{p}_{1} \bfe_{1}, \qquad v_Q := K[\rho_Q], \qquad \tld{v} = K[\tld\rho], 
	\end{split}
\end{equation*}
where we write $$Q := \{ x \in \bbR^2_+ \, : \, x_{1} > -D \}$$ and 
\begin{equation}\label{eq:D-def}
	\begin{split}
		\rho_{Q}(t,x) := \rho(t,x)\mathbf{1}_{ \{ x_{1} > - D \} }, \qquad D = D(\lambda) := \frac{C_{0}}{\lambda^{1/2}}.
	\end{split}
\end{equation}

\subsection{Bootstrap hypotheses} 
We introduce the bootstrap hypotheses in this section.

\subsubsection{Local versus far field particles}
Given $T>0$, we consider $t\in[0,T]$ and distinguish two types of particles initially at $Q=\{x_1>C_0/\sqrt{\lambda}\}$. The ones which have ever entered the square $$S_{R} := [-R,R]\times [0,R]\subset Q$$ within $[0,t]$ are considered as \textit{local} particles $A^{loc,t}$. We recall that $R >0$ is large (see \eqref{defn_C_0}) but is fixed independently of $\lambda$. 
It will be shown that
these particles, once they visited at the square $S_R$ in their past,  their height is uniformly bounded until they exit $Q$ by some large constant $H$, which is independent of smallness of $\lambda$. The others are defined by \textit{far field} particles $A^{far,t}$: More precisely,
\begin{equation}\label{defn_A_loc_far}\begin{split}
	&A^{loc,t}:=\{z\in Q\,|\, \exists s\in[0,t] \quad\mbox{such that}\quad  \Phi(s,z)\in S_R\},\\
	&A^{far,t}:=Q\setminus A_{loc,t}=\{z\in Q\,|\,\forall s\in[0,t],\quad  \Phi(s,z)\notin S_R\}. 
\end{split}
\end{equation} Note that $A^{loc,0} = S_{R}$ and $A^{loc,t'} \supset A^{loc,t}$ for $t'>t$. 

\subsubsection{Notations} 
We define the ``macroscopic'' quantities in $Q$ \begin{equation*}
\begin{split}
	\mu_Q(t) := \mu[\rho_{Q}(t,\cdot)] = \int_{ Q } x_{2}\rho(t,x) dx,
\end{split}
\end{equation*}
\begin{equation*}
\begin{split}
	|\mu|_Q(t) := \mu[|\rho_{Q}(t,\cdot)|] = \int_{ Q } x_{2}|\rho(t,x)| dx=\|\rho_Q(t)\|_{L^1_*},
\end{split}
\end{equation*}
\begin{equation*}
\begin{split}
	\kpp_Q(t) := \kpp[\rho_{Q}(t,\cdot)] = \left(\int_{ Q } |\rho(t,x)|^{2} dx \right)^{1/2},  
\end{split}
\end{equation*}
\begin{equation*}
\begin{split}
	E_Q(t) := E[\rho_{Q}(t,\cdot)] = \frac12\int_{ Q } ((-\lap_{\bbR^2_+})^{-1}\rho_{Q}) (t,x) \rho(t,x) dx .  
\end{split}
\end{equation*}

\subsubsection{Choice of bootstrap hypotheses} \label{subsubsec:boot}
For some $T>0$, we assume that the following holds for the time interval $[0,T]$: for each $t\in[0,T]$,
\begin{equation}\label{eq:B1} \tag{B1}
\begin{split}
	p_{1}(t)\quad \mbox{ satisfies }\quad{\eqref{eq:p1-def}\quad\mbox{and}\quad \eqref{eq:p1-def2}} \quad \mbox{ and }\quad  \nrm{ \rho_{Q}(t,\cdot) - \omg_{Lamb} }_{L^2 \cap L^1_*} \leq  {\frac{\varep}{10}}. 
\end{split}
\end{equation}\begin{equation}\label{eq:B2} \tag{B2}
\begin{split}
	{\int_{Q}x_2|\rho(t,x)|\mathbf{1}_{\Phi(t,A^{far,t})}(x) dx\leq\frac{\lambda}{10}.		}
\end{split}
\end{equation}
\begin{equation}\label{eq:B3} \tag{B3}
\begin{split}
	\kpp_{Q}(t) \le \kpp[\omg_{Lamb} ] + \lambda . 
\end{split}
\end{equation}\begin{equation}\label{eq:B4} \tag{B4}
\begin{split}
	{|\mu|_{Q}(t) \le \mu[\omg_{Lamb}] + \lambda.} 
\end{split}
\end{equation}\begin{equation}\label{eq:B5} \tag{B5}
\begin{split}
	E_{Q}(t) \ge E[\omg_{Lamb} ] - \lambda. 
\end{split}
\end{equation} \begin{remark}
The integral in \eqref{eq:B2} is equal to
$$
\int_{A^{far,t}\cap\left( \Phi^{-1}_{t}(Q)\right)} \Phi_2(t,z)|\rho_0(z)|\,dz.
$$
\end{remark}

\subsection{Bootstrap hypotheses for a short time}\label{subsec:bootstrap-short}

Based on the initial assumptions and continuity of the solution in time, we show that the assumptions  hold at least for a short time interval. 
\begin{proposition}\label{pro:starting time}
Let $\omg_{0}$ satisfy the assumptions of Theorem \ref{thm:main}. Then 
\begin{equation}\label{est_p_1(0)}
	|p_1(0)|\leq 5\dlt 
\end{equation} and
there exists $T_{0}>0$ such that all the bootstrap assumptions  \eqref{eq:B1}--\eqref{eq:B5} hold on $[0,T_{0}]$. 
\end{proposition}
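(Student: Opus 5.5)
The proof has two parts: first we locate $p_1(0)$ using the initial closeness to $\omg_{Lamb}$, and then we verify each of \eqref{eq:B1}--\eqref{eq:B5} at $t=0$ with strict inequality and propagate by continuity.

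\textbf{Estimate for $p_1(0)$.} By the odd symmetry of $g$ and the evenness of $\omg_{Lamb}$ in $x_1$, we have $h_{Lamb}(0):=\int \omg_{Lamb}\, g(x_1)\,dx = 0$. Since $g(x_1)=x_1$ on $|x_1|\le 3$, for $|p|\le 1$ the derivative satisfies
\[
\rd_p \int \omg_{Lamb}\, g(x_1-p)\,dx = -\int \omg_{Lamb}\,dx = -m_L .
\]
Hence $|h_{Lamb}(p)|\ge m_L|p|$ for $|p|\le 1$, and by monotonicity $|h_{Lamb}(p)|\ge m_L$ for $|p|\ge 1$. On the other hand, $\|g\|_{L^\infty}\le 4$ gives
\[
|h(0,p)-h_{Lamb}(p)|\le 4\|\omg_0^{(+)}-\omg_{Lamb}\|_{L^1}\le 12\dlt .
\]
Any zero $p_1(0)$ of $h(0,\cdot)$ therefore satisfies $m_L|p_1(0)|\le 12\dlt$. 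Since $m_L\simeq 6.83$, this gives $|p_1(0)|\le 5\dlt$, which is \eqref{est_p_1(0)}.

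\textbf{Uniqueness and regularity of $p_1$.} In a neighborhood of $p_1(0)$, $\rd_p h(0,p)\le -m_L+O(\dlt)<0$, which yields uniqueness of the zero. By the implicit function theorem, using the smoothness of $\omg_0\in H^{10}$ and the continuity in time of $\omg^{(+)}$ in $L^1$ (the flow is smooth), $p_1(t)$ is differentiable near $t=0$. Differentiating $h(t,p_1(t))=0$ and using the transport equation gives
\[
\dot p_1 \Big(\int \omg^{(+)} g'\Big) = \int \omg^{(+)}\, u_1\, g'(x_1-p_1)\,dx .
\]
At $t=0$ this equals the corresponding Lamb dipole quantity, which is exactly $1$ because $\omg_{Lamb}$ travels at unit speed, up to an $O(\dlt^{c})$ error. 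This error is controlled by Lemma \ref{lem:vel-L-infty}, applied to $\omg_0-\omg_{Lamb}$, whose $L^2$ and $L^1$ norms are $O(\dlt)$ while its $L^\infty$ norm is at most $2A$. Thus $|\dot p_1(0)-1|\ll\varep^{1/2}$, and continuity gives \eqref{eq:p1-def2} on a short interval.

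\textbf{The remaining hypotheses at $t=0$.} Since $D=C_0\lambda^{-1/2}\gg1$, we have $\rho_Q(0)=\omg_0(\cdot+p_1(0)\bfe_1)\mathbf{1}_Q$.
\begin{itemize}
\item \eqref{eq:B1}: $\|\rho_Q(0)-\omg_{Lamb}\|_{L^2\cap L^1_*}\le \dlt + C_{Lip}|p_1(0)| + \text{(translation error)} = O(\dlt) < \varep/10$.
\item \eqref{eq:B3}: $\kpp_Q(0)\le\kpp[\omg_{Lamb}]+\dlt$.
\item \eqref{eq:B4}: $|\mu|_Q(0)\le \mu[\omg_{Lamb}]+\dlt$.
\item \eqref{eq:B5}: $E_Q(0)\ge E[\omg_{Lamb}]-C\dlt^{1/2}$. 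This follows from \eqref{eq:Energy-difference-X}, bounding $\|\rho_Q(0)-\omg_{Lamb}\|_{L^2}$ and $\|\rho_Q(0)-\omg_{Lamb}\|_{L^1_*}$ by $O(\dlt)$ via the translation estimate and the tail bound \eqref{assump_moment}.
\item \eqref{eq:B2}: the left side equals $\int_{Q\setminus S_R} x_2|\omg_0(\cdot+p_1(0)\bfe_1)|\,dx\le 2\dlt$ by \eqref{assump_moment}, since $S_R\supset B_1^+$ shifted by $p_1(0)$.
\end{itemize}
All of these are strict inequalities once $\dlt\ll\lambda$ and $C_4$ is large; for \eqref{eq:B5} we need $C\dlt^{1/2}<\lambda$, which we secure by taking $\dlt\ll\lambda^2$ if needed.

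\textbf{Continuity in time (the main obstacle).} The quantities $\kpp_Q$, $|\mu|_Q$, $E_Q$ and the $L^2\cap L^1_*$ distance are continuous in $t$, because $\rho\in C([0,T];L^2\cap L^1_*)$ for smooth data and $p_1$ is continuous. The delicate hypothesis is \eqref{eq:B2}, since $A^{far,t}$ changes with $t$. However, $A^{far,t}$ is decreasing in $t$, and the flow map $\Phi$ is continuous, so for $t\le T_0$ the integral is at most
\[
\int_{Q\setminus S_R}\Phi_2(t,z)|\rho_0(z)|\,dz\le \int x_2|\rho_0|+\|v\|_{L^\infty}\,t\,\|\rho_0\|_{L^1}\le 2\dlt+CAt,
\]
which stays below $\lambda/10$ for $T_0$ small.
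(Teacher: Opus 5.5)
Your proposal is correct and follows essentially the same route as the paper: you locate $p_1(0)$ by comparing $h(0,\cdot)$ with its Lamb-dipole counterpart $-p\,m_L$, obtain a $C^1$ shift from the implicit function theorem, and get $\dot p_1(0)\approx 1$ from the traveling-wave identity together with the velocity bound of Lemma \ref{lem:vel-L-infty}; you then check \eqref{eq:B1}--\eqref{eq:B5} at $t=0$ and propagate them by continuity. Two small remarks: your $t=0$ energy error is in fact $O(\dlt)$, not $O(\dlt^{1/2})$, since both difference factors in \eqref{eq:Energy-difference-X} are $O(\dlt)$, so the extra requirement $\dlt\ll\lambda^2$ is unnecessary; and your inclusion $A^{far,t}\subset A^{far,0}=Q\setminus S_R$, combined with bounded vertical speed, justifies the time-continuity of \eqref{eq:B2}, a step the paper only asserts.
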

\begin{proof}
By recalling $\nrm{\omg_{0}^{(+)} - \omg_{Lamb} }_{L^2 \cap L^1_{*}} < 4\dlt$, we simply assume $\delta>0$ small (if necessary) to have 
$4\delta\leq \varep/100$, which is equivalent to make $C_4$ large enough in \eqref{small_dlt}.
Then by continuity of $\nrm{\omg^{(+)}(t,\cdot) - \omg_{Lamb} }_{L^2 \cap L^1_{*}} $ in $t$, 
there exists $T_{0} = T_{0}(\omg_{0},\varep) > 0$ such that \begin{equation*}
	\begin{split}
		\nrm{\omg^{(+)}(t,\cdot) - \omg_{Lamb} }_{L^2 \cap L^1_{*}} < \varep/50, 
		\quad t \in [0,T_{0}]. 
	\end{split}
\end{equation*}
By \eqref{L2_to_L1}, we get for $t \in [0,T_{0}]$ 
\begin{equation}\label{eq:near-initial-closeness_}
	\nrm{\omg^{(+)}(t,\cdot) - \omg_{Lamb} }_{L^1}
	\leq 4 \nrm{\omg^{(+)}(t,\cdot) - \omg_{Lamb} }_{L^2} 	+\nrm{\omg^{(+)}(0,\cdot) - \omg_{Lamb} }_{L^1} 
	< \frac{4\varep}{50}+\delta\leq \frac{\varep}{10}. \end{equation}

We first check \eqref{eq:B1}; note from the definition \eqref{eq:h} of $h(t,p)$ that \begin{equation*}
	\begin{split}
		(\rd_t h)(t,p) = -\int \nb\cdot( u \omg^{(+)} )(t,x) g(x_1 - p) dx = \int (u_{1}\omg^{(+)})(t,x) g'(x_1 - p) dx , 
	\end{split}
\end{equation*} which shows that $\rd_t h$ is a continuous function of $t$ and $p$. Moreover, \begin{equation*}
	\begin{split}
		(\rd_p h)(t,p) = - \int \omg^{(+)}(t,x) g'(x_{1}-p) dx 
	\end{split}
\end{equation*} is also continuous in $t$ and $p$, which implies that $h \in C^{1}_{t,p}$. 

Assuming that $|p| \le 2$, we observe that 
\begin{equation*}
	\begin{split}
		h(t,p) &=  \int ( \omg^{(+)}(t,x) - \omg_{Lamb}(x)) g(x_{1}-p) dx  + \int \omg_{Lamb}(x) g(x_{1}-p) dx \\ 
		& = 	 \int ( \omg^{(+)}(t,x) - \omg_{Lamb}(x)) g(x_{1}-p) dx - p\int \omg_{Lamb}(x) dx 
	\end{split}
\end{equation*}and 
\begin{equation*}
	\begin{split}
		(\rd_{p}h)(t,p) &=  -\int ( \omg^{(+)}(t,x)-  \omg_{Lamb}(x)) g'(x_{1}-p) dx  - \int \omg_{Lamb}(x) g'(x_{1}-p) dx \\ 
		& = 	- \int ( \omg^{(+)}(t,x) - \omg_{Lamb}(x)) g'(x_{1}-p) dx  -	\int \omg_{Lamb}(x) dx . 
	\end{split}
\end{equation*} Both of these identities follow from that $g(x_{1}-p) = (x_{1}-p)$ and $g'(x_{1}-p) = 1$ on the support of $\omg_{Lamb}$ when $|p|\le2$. By recalling \eqref{defn_m_L},
we estimate, from \eqref{eq:near-initial-closeness_}, 
for $|p|\leq 2$,
\begin{equation*}
	\begin{split}
		|h(t,p)+p m_L| &\leq 	\|g\|_{L^\infty}\cdot
		\nrm{\omg^{(+)}(t,\cdot) - \omg_{Lamb} }_{L^1}\leq 4\cdot \frac{\varep}{10}\leq{\varep}.
	\end{split}
\end{equation*} In particular, we have 
$ \pm h(t, \pm 2) < 0$ for $t \in [0,T_{0}]$.	
Thus, on this time interval,  there exists a position $p_1(t)\in[-2,2]$ satisfying
\eqref{eq:p1-def}. 
Moreover at the origin,
we know \begin{equation}\label{eq:est_h_init}
	|h(t,0)|\leq \varep \quad\mbox{and} \quad  |h(0,0)| \leq 	\|g\|_{L^\infty} 
	\nrm{\omg^{(+)}(0,\cdot) - \omg_{Lamb} }_{L^1}\leq 4 \times  {3\delta}=12\delta.
\end{equation}  
Next, 
we estimate similarly
\begin{equation}\label{eq:est_h_P_init_}
	\begin{split}
		|(\partial_p h)(t,p)+m_L| &\leq 	\|g'\|_{L^\infty} 
		\nrm{\omg^{(+)}(t,\cdot) - \omg_{Lamb} }_{L^1}\leq 2 \times  \frac{\varep}{10}\leq{\varep},
	\end{split}
\end{equation} 
which yields
\begin{equation}\label{eq:est_h_P_init}
	(\partial_p h)(t,p)\leq -m_L/2.
\end{equation}
The estimates
\eqref{eq:est_h_init} and \eqref{eq:est_h_P_init} implies that
there exists $$p_1(0)\in[-\frac{2}{m_L}  12 \delta,+\frac{2}{m_L}  12 \delta]\subset[-5\dlt, 5\dlt]
$$ satisfying \eqref{eq:p1-def}.
Now we have
$(\rd_{p}h)(0,p_{1}(0)) \ne 0$. By the Implicit Function Theorem, by shrinking $T_{0}>0$ if necessary, we have that $p_{1}(\cdot_t) \in C^{1}([0,T_{0}])$, with  \begin{equation}\label{eq:deriv-p}
	\dot{p}_{1}(t) = \frac{(\partial_t  h)(t, p_1(t))}{(-\partial_p h)(t, p_1(t))}.
\end{equation} 
From now on we estimate	$|\dot{p}_{1}(0) - 1 |$.
First we compute 
\begin{equation}\label{eq:deriv-t-H_}
	\begin{split}
		(\partial_t  h)(t, p)&=\int_{\mathbb{R}^2_{+}} u_1(t, x+p\bfe_1)\omega^{(+)} (t, x+p\bfe_1) g'(x_1) dx\\
		&=\int_{\mathbb{R}^2_{+}} u_{Lamb, 1}(  x )\omega_{Lamb} (  x ) g'(x_1) dx\\
		&\quad+\int_{\mathbb{R}^2_{+}} [u_1(t, x+p\bfe_1)-u_{Lamb, 1}(  x )] \omega^{(+)} (t, x+p\bfe_1) g'(x_1) dx\\
		&\quad+\int_{\mathbb{R}^2_{+}} u_{Lamb, 1}(  x ) [\omega^{(+)}(t, x+p\bfe_1) -\omega_{Lamb} ( x ) ]g'(x_1) dx.
	\end{split}
\end{equation}
For the first term on the right hand side of \eqref{eq:deriv-t-H_}, we use the fact that $\omega_{Lamb}$ is a traveling wave solution with propagation speed $1$, which  yields the equation $$(u_{Lamb}-\bfe_1)\cdot \nabla \omega_{Lamb}=0.$$ Testing this equation by $g(x_1)$ and integrating by parts yields 
\begin{equation}\label{ml_integral}
	\int_{\mathbb{R}^2_{+}} u_{Lamb, 1}(  x )\omega_{Lamb} (  x ) g'(x_1) dx=\int_{\mathbb{R}^2_{+}} \omega_{Lamb} (  x ) g'(x_1) dx=\int_{\mathbb{R}^2_{+}} \omega_{Lamb} (  x )  dx=m_L.
\end{equation}
Thus we have
\begin{equation*} 
	\begin{split}
		|(\partial_t  h)(t, p)-m_L| & \leq
		2 \int_{\mathbb{R}^2_{+}}\left| [u_1(t, x+p\bfe_1)-u_{Lamb, 1}(  x )] \omega^{(+)} (t, x+p\bfe_1) \right| dx\\
		&\quad+2\int_{\mathbb{R}^2_{+}} \left|u_{Lamb, 1}(  x ) [\omega^{(+)}(t, x+p\bfe_1) -\omega_{Lamb} ( x ) ]\right| dx\\
		& \leq
		C \|u_1(t, x+p\bfe_1)-u_{Lamb, 1}(  x )\|_{L^\infty}  + 
		C\|\omega^{(+)}(t, x+p\bfe_1) -\omega_{Lamb} ( x ) \|_{L^1}
	\end{split}
\end{equation*}
At the initial time, we have 
\begin{equation*}
	\begin{split}
		&\|\omega^{(+)}(0, x+p_1(0)\bfe_1) -\omega_{Lamb} ( x ) \|_{L^2}\leq\|\omega(0, x+p_1(0)\bfe_1) -\omega_{Lamb} ( x ) \|_{L^2}\\&=\|\omega(0, x) -\omega_{Lamb} ( x-p_1(0)\bfe_1 ) \|_{L^2}\leq
		\|\omega(0, x) -\omega_{Lamb} ( x ) \|_{L^2}+
		\|\omega_{Lamb} ( x )-\omega_{Lamb} ( x-p_1(0)\bfe_1 ) \|_{L^2}\\&
		\leq   \delta +C\cdot  {C_{Lip}\cdot |p_1(0)|}
		\leq C  {\dlt},
	\end{split}
\end{equation*} where $C_{Lip}$ is the Lipschitz constant \eqref{defn_C_Lip},
and similarly
\begin{equation*}
	\begin{split}
		&\|\omega^{(+)}(0, x+p_1(0)\bfe_1) -\omega_{Lamb} ( x ) \|_{L^1}\leq\|\omega(0, x+p_1(0)\bfe_1) -\omega_{Lamb} ( x ) \|_{L^1}\\&=\|\omega(0, x) -\omega_{Lamb} ( x-p_1(0)\bfe_1 ) \|_{L^1}\leq
		\|\omega(0, x) -\omega_{Lamb} ( x ) \|_{L^1}+
		\|\omega_{Lamb} ( x )-\omega_{Lamb} ( x-p_1(0)\bfe_1 ) \|_{L^1}\\&\leq 3\dlt+C\cdot C_{Lip}\cdot |p_1(0)|\leq C\dlt.
	\end{split}
\end{equation*} The above estimates controls
the initial velocity maximum by \eqref{eq:vel-L-infty-alpha2} with $\alpha=1/2$:
\begin{equation*}
	\begin{split}
		&	\|u_1(0, x+p_1(0)\bfe_1)-u_{Lamb, 1}(  x )\|_{L^\infty} \\
		&\quad \leq C\|\omega(0, x+p_1(0)\bfe_1) -\omega_{Lamb} ( x ) \|^{1/2}_{L^2}  \cdot A^{1/4}\cdot\|\omega(0, x+p_1(0)\bfe_1) -\omega_{Lamb} ( x ) \|^{1/4}_{L^1} \\
		&\quad \leq C\dlt^{1/2}\cdot A^{1/4} \cdot \dlt^{1/4}= CA^{1/4} \cdot\dlt^{3/4}.
	\end{split}
\end{equation*}
In sum,  we 
get the following result:
$$|(\partial_t  h)(0, p_1(0))-m_L|\leq CA^{1/4} \cdot\dlt^{3/4}+C\dlt\leq CA^{1/4} \dlt^{3/4}. $$
Recalling 
\eqref{eq:deriv-p} and
combining this bound and \eqref{eq:est_h_P_init_} gives 
\begin{equation*}
	\left|\frac{d}{dt} p_1(0)-1 \right| \leq  CA^{1/4}\dlt^{3/4} \leq  CA^{1/4}\varep^{3/4}  \leq \frac{1}{2}\varep^{1/2} 
\end{equation*} 
by taking $\varep$ smaller if necessary, which is equivalent to make $C_4$ large enough in \eqref{small_varep}. Therefore, by continuity of $\dot{p}_{1}(t)$ in $t$, \eqref{eq:p1-def2} is satisfied automatically on $[0,T_{0}]$ by taking $T_{0}>0$   smaller if necessary.   This means that the change of variables \eqref{eq:rho-def} is well-defined.
In particular, \begin{equation*} 
	\begin{split} \nrm{ \rho_{Q}(0,\cdot) - \omg_{Lamb} }_{L^2 \cap L^1_*} &\leq \nrm{ \rho(0,\cdot) - \omg_{Lamb} }_{L^2 \cap L^1_*} 
		= \nrm{ \omg_0(\cdot+p_1(0)\bfe_1) - \omg_{Lamb} }_{L^2 \cap L^1_*}\\&= \nrm{ \omg_0 - \omg_{Lamb}(\cdot-p_1(0)\bfe_1) }_{L^2 \cap L^1_*} \\&\leq  \nrm{ \omg_0 - \omg_{Lamb}  }_{L^2 \cap L^1_*}+ \nrm{ \omg_{Lamb} - \omg_{Lamb}(\cdot-p_1(0)\bfe_1) }_{L^2 \cap L^1_*}\\& \leq \delta +C\cdot C_{Lip}\cdot|p_1(0)|\leq C\dlt \leq \frac{\varep}{20}\end{split}
\end{equation*} by     shrinking $\dlt$   if necessary.
Then we obtain by continuity in time that $\nrm{ \rho_{Q}(t,\cdot) - \omg_{Lamb} }_{L^2 \cap L^1_*} < {\varep}/10$ holds for $t \in [0,T_{0}]$,   by     shrinking   $T_{0}$ if necessary. This completes the proof that \eqref{eq:B1} holds for $t \in [0,T_{0}]$ for some $T_0>0$.

For \eqref{eq:B2}, we want, for $t \in [0,T_{0}]$, 
\begin{equation}\label{B2_recall}
	\int_{Q}x_2|\rho(t,x)|\mathbf{1}_{\Phi(t,A^{far,t})}(x) dx\leq\frac{\lambda}{10}.
\end{equation}
From $A^{far,0}= Q\setminus S_R,$ we 
simply estimate, at the initial time,
\begin{equation*}
	\begin{split}
		\int_{Q}x_2|\rho(0, x)|\mathbf{1}_{A^{far,0}}(x) dx
		&= \int_{Q\setminus S_R}x_2|\rho(0, x)|  dx\leq  \int_{\hat{Q}\setminus \check{S_R}}x_2|\omg_0(x)|  dx\\&\leq \int_{\{|x|>1\}}|x||\omg_0(x)|  dx\leq \dlt\leq \frac{\lambda}{20},
	\end{split}
\end{equation*}  where we define
$\hat{Q}:=\{x_1>-D-1\}   \supset Q$ and 
$\check{S_R}:=[-R+1, R-1]\times [0,R]\subset S_R$. Here we used $|p_1(0)|\leq 1$. Then continuity 
of the left hand side of \eqref{B2_recall}	 
in time gives \eqref{eq:B2}. 

To check the remaining properties \eqref{eq:B3}--\eqref{eq:B5}, we observe that at the initial time, \begin{equation*}
	\begin{split}
		&	\nrm{ \omg_{0} }_{L^2} < \nrm{ \omg_{Lamb} }_{L^2} + \dlt=\kappa[\omg_{Lamb}] + \dlt\leq \kappa[\omg_{Lamb}] + \frac{\lambda}{2}\qquad\mbox{and}\\
		&|\mu|[\omg_0]=\nrm{ \omg_{0} }_{L^1_*} < \nrm{ \omg_{Lamb} }_{L^1_*} + \dlt = \mu[\omg_{Lamb} ] + \dlt\leq \mu[\omg_{Lamb} ] + \frac{\lambda}{2}. 
	\end{split}
\end{equation*} 
Next, we decompose
\begin{equation*}
	\begin{split}\left| 	E[\rho_Q(0)] - E[\omg_{Lamb}]\right|&\leq
		\left| 	E[\rho_Q(0)] - E[\rho(0)] \right|+\left| 	E[\rho(0)] - E[\omg_{Lamb}]\right|.\\
	\end{split}
\end{equation*}
For the first term, by \eqref{eq:Energy-difference-X}, we estimate
\begin{equation*}
	\begin{split}
		\left| 	E[\rho_Q(0)] - E[\rho(0)] \right| &\le C(\nrm{\tilde\rho(0)}_{L^{1}_{*}(\bbR^2_+)} \nrm{\tilde\rho(0)}_{L^{2}(\bbR^2_+)}\nrm{\rho_Q(0)+\rho(0) }_{L^{1}_{*}(\bbR^2_+)} \nrm{\rho_Q(0)+\rho(0)}_{L^{2}(\bbR^2_+)})^{1/2}\\
		&\le C(\nrm{\omg_0}_{L^{1}_{*}(\{x_1\leq -D+1\})} \nrm{\omg_0}_{L^{2}(\{x_1\leq -D+1\})}\nrm{\rho(0) }_{L^{1}_{*}(\bbR^2_+)} \nrm{\rho(0)}_{L^{2}(\bbR^2_+)})^{1/2}\\
		&\le C(\nrm{\omg_0-\omg_{Lamb}}_{L^{1}_{*}(\ohp)} \nrm{\omg_0-\omg_{Lamb}}_{L^{2}(\ohp)}\nrm{\omg_0 }_{L^{1}_{*}(\bbR^2_+)} \nrm{\omg_0}_{L^{2}(\bbR^2_+)})^{1/2}\\
		&\le C(\nrm{\omg_0-\omg_{Lamb}}_{L^{1}_{*}(\ohp)} \nrm{\omg_0-\omg_{Lamb}}_{L^{2}(\ohp)})^{1/2}
		\leq C(\delta\cdot\dlt )^{1/2}=C\dlt\leq \frac{\lambda}{4}.\\
	\end{split}
\end{equation*}
For the second term, again by \eqref{eq:Energy-difference-X}, we estimate
\begin{equation}\label{eq:ini_en}
	\begin{split}
		&\left| 	E[\rho(0)] - E[\omg_{Lamb}]\right|	
		=\left| 	E[\omg_0] - E[\omg_{Lamb}]\right|\\
		&\leq C(\nrm{\omg_0-\omg_{Lamb}}_{L^{1}_{*}(\bbR^2_+)} \nrm{\omg_0-\omg_{Lamb}}_{L^{2}(\bbR^2_+)}\nrm{\omg_0+\omg_{Lamb}}_{L^{1}_{*}(\bbR^2_+)} \nrm{\omg_0+\omg_{Lamb}}_{L^{2}(\bbR^2_+)})^{1/2}\\
		&\leq C(\delta\cdot\dlt )^{1/2}=C\dlt\leq \frac{\lambda}{4}.\\
	\end{split}
\end{equation}
Thus we get
\begin{equation*}
	\begin{split}\left| 	E[\rho_Q(0)] - E[\omg_{Lamb}]\right|& \leq \frac{\lambda}{2}.
	\end{split}
\end{equation*}  
Lastly, we deduce \eqref{eq:B3}--\eqref{eq:B5} simply from continuity in time of $\kpp_{Q}(t), |\mu|_{Q}(t)$ and $E_{Q}(t)$,  by     shrinking   $T_{0}$ again if necessary. 
\end{proof} 

\section{Consequences of the bootstrap hypotheses}\label{sec:conseq} In this section, we collect several consequences of the bootstrap hypotheses. {This section is organized as follows. In \S \ref{subsec:small-tilde}, we obtain smallness of $\tilde\rho$. Then, in \S \ref{subsec:main-error}, we introduce the main-error decomposition of $\rho_{Q}$ and obtain error smallness. 
In  \S \ref{subsec:vertical-speed}, smallness of the \textit{vertical} component of the  velocity in \textit{local} region is obtained (Lemma \ref{lem:vel-Q-decomp}). This vertical smallness gives \textit{height} control of particle trajectories for \textit{local} particles (Lemma \ref{lem:S_R_to_H}). 
In  \S \ref{subsec:height}, the height control for local particles produces smallness of the {vertical} component of the  velocity in \textit{far} region (Lemma \ref{lem:v_2_on_S}), which gives the height control for far particles (Lemma \ref{lem:far_cont}). 

Even when it is not explicitly mentioned, we always assume  that
all the bootstrap hypotheses \eqref{eq:B1}--\eqref{eq:B5} on the interval
$ [0,T]$ hold. In this section, we will freely use these  hypotheses.}

\subsection{Smallness of $\tilde\rho$}\label{subsec:small-tilde}

We begin with lower bounds for 
$\kpp_{Q}$ and $|\mu|_Q$, which give a control on $\tilde\rho$.
\begin{lemma}\label{lem:rem-small}
We have \begin{equation}\label{eq:kpp-mu-Q-lower-bounds}
	\begin{split}
		\kpp_{Q}(t) \ge \kpp[\omg_{Lamb}] - 40\lambda, \qquad |\mu|_Q(t) \ge \mu[\omg_{Lamb}] - 9\lambda
	\end{split}
\end{equation} and \begin{equation}\label{eq:rem-small}
	\begin{split}
		\nrm{ \tld{\rho}(t,\cdot) }_{L^{2}} \le 30\lambda^{1/2},\quad \nrm{ \tld{\rho}(t,\cdot) }_{L^{1}} \le 60\lambda^{1/2}, \quad   \nrm{ \tld{v}(t,\cdot) }_{L^{\infty}} \le 
		{C}A^{1/2} \lambda^{1/4}.
	\end{split}
\end{equation}
where ${C}$ is a universal constant.	
\end{lemma}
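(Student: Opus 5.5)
The plan is to get the lower bounds \eqref{eq:kpp-mu-Q-lower-bounds} from energy: \eqref{eq:B5} together with the sharp energy inequality \eqref{eq: SEI}, applied to $\rho_Q(t)$. The smallness of $\tld\rho$ then follows from conservation of the $L^2$ norm and the $L^1$ norm along the flow, combined with the upper bounds \eqref{eq:B3}, \eqref{eq:B4}. Finally, Lemma \ref{lem:vel-L-infty} converts the $L^2$ and $L^1$ smallness of $\tld\rho$ into velocity smallness.

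First step: lower bounds. By \eqref{eq: SEI}, \eqref{eq:B5} and \eqref{eq:B4},
\[
E[\omg_{Lamb}]-\lambda \le E_Q(t)\le C_L\kpp_Q(t)\,|\mu|_Q(t)\le C_L\kpp_Q(t)\,(\mu[\omg_{Lamb}]+\lambda).
\]
Since $E[\omg_{Lamb}]=C_L\kpp[\omg_{Lamb}]\mu[\omg_{Lamb}]$, this gives
\[
\kpp_Q(t)\ge \frac{C_L\kpp[\omg_{Lamb}]\mu[\omg_{Lamb}]-\lambda}{C_L(\mu[\omg_{Lamb}]+\lambda)} .
\]
The right side is at least $\kpp[\omg_{Lamb}]-c\lambda$ with $c$ explicit from the numerical values: $C_L\simeq0.15$, $\mu[\omg_{Lamb}]=\pi$ and $\kpp[\omg_{Lamb}]=c_L\sqrt\pi$ by the footnote to \eqref{eq:lamb-rescale-def}. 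That constant should come out below $40$. Symmetrically, using \eqref{eq:B3} in place of \eqref{eq:B4} gives $|\mu|_Q(t)\ge \mu[\omg_{Lamb}]-9\lambda$. The only care needed here is the arithmetic of these constants, using $\lambda$ small.

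Second step: $L^2$ bound on $\tld\rho$. Since $\|\rho(t)\|_{L^2}=\|\omg_0\|_{L^2}<\kpp[\omg_{Lamb}]+\dlt$ and $\rho_Q,\tld\rho$ have disjoint supports,
\[
\|\tld\rho\|_{L^2}^2=\|\omg_0\|_{L^2}^2-\kpp_Q^2\le (\kpp_L+\dlt)^2-(\kpp_L-40\lambda)^2\lesssim \kpp_L\lambda ,
\]
where $\kpp_L:=\kpp[\omg_{Lamb}]\approx 6.8$. This yields $\|\tld\rho\|_{L^2}\le 30\lambda^{1/2}$ after checking constants ($\sqrt{2\cdot 6.8\cdot 41}\approx 24$).

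Third step: $L^1$ bound on $\tld\rho$. This is the step I expect to need the most care, since $L^1$ is not directly controlled by $L^2$ on unbounded regions. I would use mass conservation, $\|\rho(t)\|_{L^1}=\|\omg_0\|_{L^1}\le\|\omg_{Lamb}\|_{L^1}+3\dlt$, and split
\[
\|\tld\rho\|_{L^1}=\|\rho\|_{L^1}-\|\rho_Q\|_{L^1} .
\]
Then I bound $\|\rho_Q\|_{L^1}$ from below using \eqref{eq:B1}:
\[
\|\rho_Q\|_{L^1}\ge \|\rho_Q\|_{L^1(B_1^+)}\ge \|\omg_{Lamb}\|_{L^1}-\sqrt{\pi/2}\,\|\rho_Q-\omg_{Lamb}\|_{L^2} .
\]
This only gives $O(\varep)$, which is too weak. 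The better route is the argument of Lemma \ref{lem:L2-L1}, applied with $f=\rho_Q$, $g=\rho$ and the region $B_1^+$ replaced appropriately; equivalently, one needs the $L^2$ difference on $B_1^+$ to be $O(\lambda^{1/2})$. So I would refine the previous step: the near-equality in the sharp inequality forces $\rho_Q$ close to the Lamb dipole only at scale $\varep$, which is not enough. Instead I would bound directly, with $|B_1^+|=\pi/2$:
\[
\|\rho_Q\|_{L^1}\ge\|\rho_Q\|_{L^1(B_1^+)}\ge \int_{B_1^+}\rho_Q\,\omg_{Lamb}\Big/\|\omg_{Lamb}\|_{L^\infty},
\]
or obtain $\int\rho_Q\omg_{Lamb}\ge\kpp_L^2-O(\lambda)$ from the $L^2$ identity $\|\rho_Q-\omg_{Lamb}\|^2=\kpp_Q^2+\kpp_L^2-2\langle\rho_Q,\omg_{Lamb}\rangle$. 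If this fails, the fallback is to track positive and negative parts: Lemma \ref{lem:minus-small} makes $\omg^{(-)}$ $O(\dlt)$ in $L^1$, so only the positive part matters. For the positive part the mass is transported, so I would bound the mass of $\tld\rho^{(+)}$ by $\|\tld\rho\|_{L^2}$ times a suitable area bound. I am unsure which of these closes cleanly to $60\lambda^{1/2}$.

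Fourth step: velocity bound. Apply Lemma \ref{lem:vel-L-infty} with $\alpha=1$... Since $\|\tld\rho\|_{L^\infty}\le A$, the choice $\alpha=1$ gives $\|\tld v\|_{L^\infty}\le C(A\|\tld\rho\|_{L^1})^{1/2}\le CA^{1/2}\lambda^{1/4}$, as claimed.
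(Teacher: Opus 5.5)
Your first, second and fourth steps coincide with the paper's proof. These are the sharp energy inequality combined with \eqref{eq:B3}--\eqref{eq:B5}, the identity $\|\omg_0\|_{L^2}^2=\|\tld\rho(t)\|_{L^2}^2+\kpp_Q(t)^2$, and Lemma \ref{lem:vel-L-infty} with $\alpha=1$. The third step, the $L^1$ bound on $\tld\rho$, is left open, and none of the routes you list closes it.

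Every route that compares $\rho_Q$ with $\omg_{Lamb}$ is limited by \eqref{eq:B1}, which only gives $\varep$-closeness. Nothing available gives $O(\lambda^{1/2})$-closeness on $B_1^+$. The pairing bound $\|\rho_Q\|_{L^1(B_1^+)}\ge\int\rho_Q\omg_{Lamb}/\|\omg_{Lamb}\|_{L^\infty}$ even loses an $O(1)$ amount when $\rho_Q=\omg_{Lamb}$, because $\int\omg_{Lamb}^2<\|\omg_{Lamb}\|_{L^\infty}\int\omg_{Lamb}$. Your fallback, ``$\|\tld\rho\|_{L^2}$ times a suitable area bound'', cannot be run globally either. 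Theorem \ref{thm:main} assumes no bound on $|\supp\omg_0|$, so the support of $\tld\rho(t)$ may have arbitrarily large area. The hypothesis you never use in this step is the moment condition \eqref{assump_moment}, and it is exactly what is needed.

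The missing idea is to split $\tld\rho(t)$ by Lagrangian label rather than by sign. Since $\rho_0(z)=\omg_0(z+p_1(0)\bfe_1)$, let $\Omega_0:=B_1^+-p_1(0)\bfe_1$ be the set of labels of particles that initially lay in the unit half-disk. The flow $\Phi(t,\cdot)$ preserves area, so $|\Phi(t,\Omega_0)|=\pi/2$. By Cauchy--Schwarz and your second step, the part of $\tld\rho(t)$ supported in $\Phi(t,\Omega_0)$ has $L^1$ norm at most $\sqrt{\pi/2}\,\|\tld\rho(t)\|_{L^2}\le 30\sqrt{\pi/2}\,\lambda^{1/2}$.

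The remaining part is carried by particles with labels outside $\Omega_0$. Since $|\rho|$ is transported, its $L^1$ norm is at most $\int_{\bbR^2_+\setminus\Omega_0}|\rho_0|\,dz=\int_{\bbR^2_+\setminus B_1^+}|\omg_0|\,dx\le\int_{\{|x|>1\}}|x||\omg_0(x)|\,dx<\dlt$. Hence $\|\tld\rho(t)\|_{L^1}\le 30\sqrt{\pi/2}\,\lambda^{1/2}+\dlt\le 60\lambda^{1/2}$, with no need to separate $\omg^{(\pm)}$. This is the paper's argument, and with it your fourth step goes through as written.
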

\begin{proof} To see \eqref{eq:kpp-mu-Q-lower-bounds}, we observe from \eqref{eq:B3}--\eqref{eq:B5} and the sharp energy inequality that \begin{equation*}
	\begin{split}
		\kpp[\omg_{Lamb}] \mu[\omg_{Lamb}] - \frac{\lambda}{C_{L} } \le  \frac{E_Q(t)}{C_{L}} \le \kpp_Q(t)\nrm{\rho_{Q}(t,\cdot)}_{L^1_*}  \le \kpp_Q(t) ( \mu[\omg_{Lamb}] + \lambda ) 
	\end{split}
\end{equation*} so that \begin{equation*}
	\begin{split}
		\kpp_Q(t) \ge \kpp[\omg_{Lamb}] \frac{ \mu[\omg_{Lamb}] }{\mu[\omg_{Lamb}]+4\lambda} - \frac{1}{C_{L}( \mu[\omg_{Lamb}] + 4\lambda )} \lambda \ge \kpp[\omg_{Lamb}] - 40\lambda.
	\end{split}
\end{equation*} 
Similarly, we can show 
$$
\nrm{\rho_{Q}(t,\cdot)}_{L^1_*} \ge \mu[\omg_{Lamb}] - 9\lambda.
$$ Then observe that $ \nrm{\omg_{0}}_{L^2}^2=\nrm{\omg(t)}_{L^2}^2=\nrm{\rho(t)}_{L^2}^2 = \nrm{ \tld{\rho}(t,\cdot) }_{L^{2}}^2 + (\kpp_Q(t))^{2}$ implies, with \eqref{eq:mainthm-assumptions}, \begin{equation*}
	\begin{split}
		\nrm{ \tld{\rho}(t,\cdot) }_{L^{2}}^2 \le (\kpp[\omg_{Lamb}] + \dlt)^2 - ( \kpp[\omg_{Lamb}] - 40\lambda )^{2} \le 100  \kpp[\omg_{Lamb}] \lambda. 
	\end{split}
\end{equation*} 
This gives the first inequality of \eqref{eq:rem-small}. 
For the $L^1$-norm, we decompose \begin{equation}\label{est:L2toL1}
	\begin{split}
		\nrm{ \tld{\rho}(t,\cdot) }_{L^{1}} &=
		\int_{\mathbb{R}^2_+\setminus Q}|\tilde{\rho}(t,x)|\mathbf{1}_{\{y\in\mathbb{R}^2_+\,:\,|\Phi^{-1}_{t}(y)|\leq 1 \}}(x)\,dx +	\int_{\mathbb{R}^2_+\setminus Q}|\tilde{\rho}(t,x)|\mathbf{1}_{\{y\in\mathbb{R}^2_+\,:\,|\Phi^{-1}_{t}(y)|> 1 \}}(x)\,dx\\&\leq  \nrm{ \tld{\rho}(t,\cdot) }_{L^{2}} \sqrt{|\{y\in\mathbb{R}^2_+\,:\,|y|\leq 1\}|}+\int_{\{x\in\mathbb{R}^2_+\,:\,|x|>1\}}|x||\omg_0(x)|dx\\&\leq  30\lambda^{1/2} \sqrt{\pi/2}+\dlt\leq 60\lambda^{1/2}.
	\end{split}
\end{equation}
The last inequality of \eqref{eq:rem-small} then follows from \eqref{eq:vel-L-infty-alpha2} with $\alpha=1$. 
\end{proof}

\begin{lemma}[Global velocity estimate] \label{lem:borderline-v} We have 
\begin{equation}\label{eq:global-v}
	\begin{split}
		\nrm{ v(t,\cdot) - u_{Lamb} + \begin{pmatrix}
				1 \\ 0 
		\end{pmatrix} }_{L^\infty}  \le 2\varep^{1/2} .
	\end{split}
\end{equation} In particular, for $|x| \ge 1$ we have that \begin{equation}\label{eq:borderline-v}
	\begin{split}
		|v_{1}(t,x) + 1|, |v_{2}(t,x)|
		\le 2\varep^{1/2} + \frac{2}{|x|^2}
		\le 2\varep^{1/2} + \frac{2}{|x_{1}|^2}.
	\end{split}
\end{equation} 
\end{lemma}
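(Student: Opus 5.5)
We split the velocity error into four pieces: the deviation of the frame speed, the error velocity of the quadrant part, the remainder velocity, and nothing else. Since $v = K[\rho_Q] + K[\tld\rho] - \dot p_1(t)\bfe_1$ and $u_{Lamb} = K[\omg_{Lamb}]$, we write
\begin{equation*}
v - u_{Lamb} + \bfe_1 = K[\rho_Q - \omg_{Lamb}] + \tld v + (1-\dot p_1(t))\bfe_1 .
\end{equation*}

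\textbf{The frame speed and the remainder.} By \eqref{eq:B1}, i.e.\ \eqref{eq:p1-def2}, the last term has size at most $\varep^{1/2}$. For the remainder, Lemma \ref{lem:rem-small} gives $\nrm{\tld v}_{L^\infty} \le CA^{1/2}\lambda^{1/4}$. By \eqref{small_lambda}, $\lambda \le \varep^2/(AH)^3$, so $CA^{1/2}\lambda^{1/4} \le C A^{1/2}\varep^{1/2}(AH)^{-3/4}$. Since $H \ge A \ge 1000$, this is at most $\varep^{1/2}/4$ once the universal constants are adjusted.

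\textbf{The main term.} We apply Lemma \ref{lem:vel-L-infty} with $\alp = 1$ to $\omg = \rho_Q - \omg_{Lamb}$, which gives
\begin{equation*}
\nrm{K[\rho_Q - \omg_{Lamb}]}_{L^\infty} \le C\,\nrm{\rho_Q-\omg_{Lamb}}_{L^\infty}^{1/2}\,\nrm{\rho_Q-\omg_{Lamb}}_{L^1}^{1/2}.
\end{equation*}
The $L^\infty$ factor is at most $A + \nrm{\omg_{Lamb}}_{L^\infty} \le CA$, because $\nrm{\rho}_{L^\infty} = \nrm{\omg_0}_{L^\infty} \le A$. \eqref{eq:B1} controls only the $L^2\cap L^1_*$ distance, not the $L^1$ distance, so the $L^1$ factor needs a separate argument. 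To get it, we apply Lemma \ref{lem:L2-L1} with $f = \rho_Q(t)$ and $g = \rho(t)$; the hypothesis $\nrm{f}_{L^1} \le \nrm{g}_{L^1}$ holds trivially. This yields
\begin{equation*}
\nrm{\rho_Q - \omg_{Lamb}}_{L^1} \le 4\cdot\frac{\varep}{10} + \nrm{\rho(t) - \omg_{Lamb}}_{L^1}.
\end{equation*}
The last term is bounded by $\nrm{\rho_Q - \omg_{Lamb}}_{L^1} + \nrm{\tld\rho}_{L^1}$, which is circular. Instead, we apply Lemma \ref{lem:L2-L1} with $f = \rho_Q^{(+)}$ and $g = \omg_0$, using the conservation $\nrm{\rho(t)}_{L^1} = \nrm{\omg_0}_{L^1}$. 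This requires $\nrm{\rho_Q^{(+)}}_{L^1} \le \nrm{\omg_0}_{L^1}$, which holds. It gives
\begin{equation*}
\nrm{\rho_Q^{(+)} - \omg_{Lamb}}_{L^1} \le \tfrac{4\varep}{10} + 3\dlt .
\end{equation*}
For the negative part, Lemma \ref{lem:minus-small} gives $\nrm{\rho_Q^{(-)}}_{L^1} \le 3\dlt$. Together, $\nrm{\rho_Q-\omg_{Lamb}}_{L^1} \le \varep/2 + 6\dlt \le \varep$.

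Hence $\nrm{K[\rho_Q-\omg_{Lamb}]}_{L^\infty} \le C A^{1/2}\varep^{1/2}$. This is the main obstacle: it is not $\le \varep^{1/2}$ because of the factor $A^{1/2}$. We fix this by interpolating differently with Lemma \ref{lem:vel-L-infty} at a small $\alp$, which trades $L^\infty$ for the $L^2$ smallness $\varep/10$:
\begin{equation*}
\nrm{K[\rho_Q-\omg_{Lamb}]}_{L^\infty} \le C_\alp\,\varep^{1-\alp}(CA)^{\alp/2}\varep^{\alp/2} \le C A^{\alp/2}\varep^{1-\alp/2}.
\end{equation*}
With $\alp = 1/2$ this is $CA^{1/4}\varep^{3/4}$. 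Since $C_1\varep \le 1/A$ by \eqref{small_varep}, we have $CA^{1/4}\varep^{1/4} \le C C_1^{-1/4} \le 1/2$ for $C_1$ large, so the main term is at most $\varep^{1/2}/2$.

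Summing the three pieces gives $\varep^{1/2} + \varep^{1/2}/2 + \varep^{1/2}/4 \le 2\varep^{1/2}$, which is \eqref{eq:global-v}. For \eqref{eq:borderline-v}, we insert the explicit formula \eqref{eq:Lamb-vel} for $|x|>1$. This gives $|u_{Lamb}(x)| \le |x|^{-2}$ componentwise, so $|v_1 + 1|$ and $|v_2|$ are at most $2\varep^{1/2} + |x|^{-2}$. The case $|x| = 1$ follows by continuity, and $|x| \ge |x_1|$ gives the final inequality.
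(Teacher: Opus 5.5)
Your proof is correct and follows the paper's proof: you use the same three-piece split into $\tilde v$, $(1-\dot p_1)\mathbf{e}_1$ and $K[\rho_Q-\omega_{Lamb}]$, controlled by Lemma \ref{lem:rem-small}, \eqref{eq:B1} and the interpolation of Lemma \ref{lem:vel-L-infty}. The only difference is that the paper takes $\alpha=1/4$ and uses only the bound $\|\rho_Q-\omega_{Lamb}\|_{L^1\cap L^\infty}\le 4A$, which already gives $CA^{1/4}\varepsilon^{3/4}$; so your $L^1$-smallness detour via Lemma \ref{lem:L2-L1} is valid but unnecessary (and it would also work directly with $f=\rho_Q$, $g=\omega_0$, without splitting off $\rho_Q^{(+)}$).
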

\begin{proof}
We recall that $|v_{1}(t,x) + 1| = |v_{Q,1} + \tld{v}_{1} - \dot{p}_{1} + 1 |$ and $|v_{2}(t,x) | = |v_{Q,2} + \tld{v}_{2}|$.

The estimate on $|\tld{v}|$ follows from \eqref{eq:rem-small} by 
choosing $\lambda$ small depending on $A$ and $\varep$:
\begin{equation*}
	\begin{split}
		\nrm{\tld{v}}_{L^\infty} 
		\leq {C}A^{1/2} \lambda^{1/4}			 
		\le \frac18\varep^{1/2}. 
	\end{split}
\end{equation*} 
The estimate of $|1 - \dot{p}_{1}|$ is given in \eqref{eq:B1}. Next, we estimate $v_{Q}$ by separately considering the contribution from $\omg_{Lamb}$ and $\rho_{Q} - \omg_{Lamb}$: using \eqref{eq:B1} with \eqref{eq:vel-L-infty-alpha2} with $\alpha=1/4$, \begin{equation*}
	\begin{split}
		\nrm{ \nb^\perp\lap^{-1}[ \rho_{Q} - \omg_{Lamb} ] }_{L^\infty} & \le C \nrm{\rho_{Q} - \omg_{Lamb} }_{L^2}^{3/4} \nrm{\rho_{Q} - \omg_{Lamb} }_{L^1 \cap L^\infty}^{1/4} \\
		& \le C\varep^{3/4}(4A)^{1/4} \le \frac18\varepsilon^{1/2} 
	\end{split}
\end{equation*} 
by assuming $\varep$ small depending on $A$, if necessary. This proves \eqref{eq:global-v}. 

Lastly, when $|x|\ge 1$, $u_{Lamb}$ is bounded by $2|x|^{-2} \le 2|x_{1}|^{-2}$ as explicitly seen by \eqref{eq:Lamb-vel}. 
\end{proof}

\begin{remark}
As a corollary, for any $x\notin S_R=[-R,R]\times [0,R]$,  we have
\begin{equation}\label{eq:vel_out_S_R}
	|v_1(t,x)+1|\leq 2\varep^{1/2} +\frac{2}{R^2}\leq \frac{1}{4}\quad\mbox{and}\quad |v_2(t,x)|\leq \frac14.
\end{equation}
Therefore, we have $
v_1(x)<-1/2$  on $\{x_1=-D\},
$ which implies, for any $p\in[1,\infty]$,
\begin{equation*}
	\|\rho_Q(t)\|_{L^p}\leq \|\rho_Q(0)\|_{L^p}.
	\end{equation*} \end{remark} 
	
	\subsection{Main--error decomposition}\label{subsec:main-error} 
	We now decompose $\rho_{Q}$ further:  \begin{equation*}
\begin{split}
	\rho_{Q} = \rho_{Q}^{main} + \rho_{Q}^{err}, \quad \mbox{where} \quad  \rho_{Q}^{main} := \rho_{Q} \mathbf{1}_{ \{ x_{1} > - R \} }, \quad \rho_{Q}^{err} := \rho_{Q} \mathbf{1}_{ \{ -D < x_{1} \le - R  \} }.
\end{split}
\end{equation*} 

\begin{lemma}[Smallness of the error]\label{lem:error-small} We have 
\begin{equation}\label{eq:error-small}
	\begin{split}
		\nrm{ \rho_{Q}^{err}(t)}_{L^1_*} & \le 30\lambda, \, 
		\nrm{ \rho_{Q}^{err}(t)}_{L^2} \le  {30}\lambda^{1/2}, \,
		\nrm{ \rho_{Q}^{err}(t)}_{L^1} \le  {60}\lambda^{1/2}, \, 
		\nrm{ v_{Q}^{err}(t)}_{L^\infty} \leq C A^{1/4} \lambda^{3/8}, 
	\end{split}
\end{equation} where ${C}$ is a universal constant.	

\end{lemma}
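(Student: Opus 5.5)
The plan is to compare $\rho_Q$ with $\omg_{Lamb}$ on the region $\{-D<x_1\le -R\}$, where $\omg_{Lamb}$ vanishes. First I would get the $L^1_*$ bound by additivity. Since $\rho_Q^{main}$ and $\rho_Q^{err}$ have disjoint supports, $|\mu|_Q = \nrm{\rho_Q^{main}}_{L^1_*}+\nrm{\rho_Q^{err}}_{L^1_*}$. It then suffices to show that $\nrm{\rho_Q^{main}}_{L^1_*}\ge \mu[\omg_{Lamb}]-C\lambda$ and combine this with \eqref{eq:B4}.

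For this lower bound I would use the energy, in the spirit of Lemma \ref{lem:rem-small}. Write $E_Q = E[\rho_Q^{main}]+E[\rho_Q^{err}]+2E_{inter}[\rho_Q^{main},\rho_Q^{err}]$. By the sharp energy inequality \eqref{eq: SEI}, together with \eqref{eq:B3}, we have $E[\rho_Q^{main}]+E[\rho_Q^{err}]\le C_L\kpp_Q(\nrm{\rho_Q^{main}}_{L^1_*}+\nrm{\rho_Q^{err}}_{L^1_*})$, which reduces to the same total as before and loses nothing. The gain has to come from the cross term and from a sharper bound on $E[\rho_Q^{err}]$.

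Alternatively, I would split the norms: $E[\rho_Q^{main}]\le C_L\kpp_m\mu_m$ and $E[\rho_Q^{err}]\le C_L\kpp_e\mu_e$, with $\kpp_m^2+\kpp_e^2=\kpp_Q^2$, where the subscripts $m,e$ denote the norms of $\rho_Q^{main}$ and $\rho_Q^{err}$. The interaction term is controlled by \eqref{eq:E-inter-unsymm}. Because the supports are separated by a distance $\gtrsim$ (the Lamb part sits near the origin and the error lies at $x_1\le -R$), I expect it to be bounded by a small multiple of $\mu_e$, roughly $(C/R+C\varep)\mu_e$. The mass of $\rho_Q^{main}$ outside $B_1$ is small by \eqref{eq:B1}, and the kernel $\bfG(x,y)\lesssim x_2y_2/|x-y|^2$ from Lemma \ref{lem:vel-decay} gives the decay across the separation.

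Combining these with \eqref{eq:B5}, I get $C_L\kpp\mu-\lambda\le C_L\kpp_m\mu_m+C_L\kpp_e\mu_e+c\,\mu_e$. Here \eqref{eq:B1} gives $\kpp_e\le\nrm{\rho_Q-\omg_{Lamb}}_{L^2}\le\varep/10$, so $\kpp_e$ is small, and $\kpp_m\le\kpp+\lambda$. Using $\mu_m+\mu_e\le\mu+\lambda$ then yields $(C_L\kpp - C_L\varep/10-c)\mu_e\le C\lambda$, that is, $\mu_e\le 30\lambda$ once the constants are tracked. This coercivity step, meaning controlling $c$ from the interaction kernel uniformly in the position of the error, is the main obstacle I expect. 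A cruder alternative is to bound $|E_{inter}|\le \frac12\nrm{x_2^{-1}G[\rho_Q^{main}]}_{L^\infty(\{x_1\le -R\})}\mu_e$ with an explicit pointwise decay of $\psi_{Lamb}/x_2=|x|^{-2}\le R^{-2}$, adding a perturbation term that is $O(\varep^{1/2})$ by Lemma \ref{lem:vel-L-infty}.

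The remaining bounds are easier. For $L^2$, I would use the conservation argument of Lemma \ref{lem:rem-small}: $\nrm{\rho_Q^{err}}_{L^2}^2=\kpp_Q^2-\kpp_m^2$. The $L^1_*$ bound plus energy gives $\kpp_m\ge\kpp-C\lambda$ (repeat the first computation of Lemma \ref{lem:rem-small} with $\rho_Q^{main}$), so $\nrm{\rho_Q^{err}}_{L^2}^2\le C\lambda$, which yields the $30\lambda^{1/2}$ bound. For $L^1$, I would split as in \eqref{est:L2toL1} into particles originating from $\{|x|\le1\}$, handled by $L^2$ and Cauchy--Schwarz, and from $\{|x|>1\}$, handled by the moment assumption. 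Finally, the velocity bound follows from Lemma \ref{lem:vel-L-infty} with $\alpha=1/2$: $\nrm{\rho_Q^{err}}_{L^2}^{1/2}A^{1/4}\nrm{\rho_Q^{err}}_{L^1}^{1/4}\le CA^{1/4}\lambda^{1/4}\lambda^{1/8}$.
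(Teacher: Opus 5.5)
Your proposal is correct and follows essentially the paper's route. Both arguments split $E_Q$ into the self-energy of $\rho_Q^{main}$ plus error terms and apply the sharp energy inequality to $\rho_Q^{main}$ together with $L^1_*$-additivity and \eqref{eq:B3}--\eqref{eq:B5}. Both control the cross term through a pointwise bound $|\psi_Q^{main}(x)|\le x_2/5$ on $\{x_1\le -R\}$, obtained from the explicit decay of $\psi_{Lamb}$ plus Lemma~\ref{lem:vel-L-infty}; this is exactly your ``cruder alternative''. The lower bound on $\kpp_Q^{main}$, the Pythagorean $L^2$ bound, the Lagrangian $L^1$ splitting and the velocity bound are then derived as you describe. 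The only real variation is that you bound $E[\rho_Q^{err}]$ by the sharp energy inequality using $\nrm{\rho_Q^{err}}_{L^2}\le\varep/10$, whereas the paper uses $|\psi_Q^{err}|\le x_2/10$.

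Do use that alternative, not the separation heuristic. The supports of $\rho_Q^{main}$ and $\rho_Q^{err}$ touch along $\{x_1=-R\}$, so only the $\omg_{Lamb}$ portion is separated. The remaining part of $\rho_Q^{main}$ must be handled by Lemma~\ref{lem:vel-L-infty}, which gives an $O(\varep^{1/2}A^{1/2})$ coefficient rather than $O(\varep)$; this is still small by \eqref{small_varep}.
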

\begin{proof}
Define $\rho_{rem}:=\rho_Q-\omega_{Lamb}$ and $\psi_{rem}:=\psi[\rho_{rem}]$. By using the explicit formula of $\psi_{Lamb}$ in \eqref{eq:Lamb-psi}, $R\geq 10$, \eqref{eq:vel-L-infty-alpha2} and the bootstrap assumption $\nrm{\rho_{rem}}_{L^2}\leq \eps$,   we get for $x\in \mathbb R^2_+$ with $x_1\leq-R$,
\begin{equation}\label{eq: str-fun-R}
	\begin{split}
		|\psi_{Q}(x)| \leq |\psi_{Lamb}(x)|+|x_2^{-1}\psi_{rem}(x)| x_2 
		\leq \frac{x_2}{10}.
	\end{split}
\end{equation}
Here we assumed $\varep$ small enough to get
$
C\varep^{1/2} A^{1/2} \leq 1/20
$.
Similarly,
denoting 	 $\psi_{Q}^{err}=\psi[\rho_{Q}^{err}]$ and $\psi_{Q}^{main}=\psi[\rho_{Q}^{main}]$ 
and  using $\nrm{ \rho_{Q}^{err}}_{L^2}\leq \nrm{\rho_{rem}}_{L^2}\leq \eps$ and \eqref{eq: str-fun-R},
we have  	
for  $x_1\leq-R$,
\begin{equation}\label{eq: str-fun2-R}
	\begin{split}
		|\psi_{Q}^{err}(x)| 
		\leq \frac{x_2}{10},\quad \ \text{and}\quad \ |\psi_Q^{main}(x)|\leq \frac{x_2}{5}.
	\end{split}
\end{equation}
We deduce from \eqref{eq:B4} 
that 
\begin{equation*}
	\nrm{\rho_Q^{main}}_{L^1_*}=\nrm{\rho_Q}_{L^1_*}-\nrm{\rho_Q^{err}}_{L^1_*}\leq  \mu[\omg_{Lamb}] + \lambda-\nrm{\rho_Q^{err}}_{L^1_*}.
\end{equation*}
Now we write \begin{equation*}
	\begin{split}
		E_{Q} = \frac12 \int (\rho_Q^{err}+ \rho_Q^{main})(\psi_Q^{err}+ \psi_Q^{main}) dx = \frac12 \int \rho_Q^{err} (\psi_Q^{err}+2 \psi_Q^{main}) dx+\frac12\int \rho_Q^{main} \psi_Q^{main} dx. 
	\end{split}
\end{equation*} Then, using that $\rho^{err}_{Q}$ is supported in $\{ x_{1} \le -R \}$, we can use \eqref{eq: str-fun-R}, \eqref{eq: str-fun2-R} 
to estimate \begin{equation*}
	\begin{split}
		\frac12 \int \rho_Q^{err} (\psi_Q^{err}+2 \psi_Q^{main}) dx \le  \frac14 \nrm{\rho_Q^{err}}_{L^1_*}. 
	\end{split}
\end{equation*} Next, with the sharp energy inequality, \begin{equation*}
	\begin{split}
		\frac12\int \rho_Q^{main} \psi_Q^{main} dx \le C_L\nrm{\rho_Q^{main}}_{L^1_*}\nrm{\rho_Q^{main}}_{L^2} \le C_L(\mu[\omg_{Lamb}] + \lambda-\nrm{\rho_Q^{err}}_{L^1_*})(\kpp[\omg_{Lamb}] + \lambda). 
	\end{split}
\end{equation*} Combining these two estimates, we derive  
\begin{equation}\label{eq: decom-energy}
	\begin{split}
		E_Q
		&\le \frac14 \nrm{\rho_Q^{err}}_{L^1_*} + C_L\nrm{\rho_Q^{main}}_{L^1_*}\nrm{\rho_Q^{main}}_{L^2}  \\
		&\leq \frac14 \nrm{\rho_Q^{err}}_{L^1_*} +  C_L(\mu[\omg_{Lamb}] + \lambda-\nrm{\rho_Q^{err}}_{L^1_*})(\kpp[\omg_{Lamb}] + \lambda)\\
		&\leq E[\omg_{Lamb}]-  \frac34 \nrm{\rho_Q^{err}}_{L^1_*}
		+ {20\lambda},			
	\end{split}
\end{equation} where we used the identity $E[\omega_{Lamb}]=C_L \kappa[\omega_{Lamb}] \mu[\omega_{Lamb}]$ {and} $C_L \kappa[\omega_{Lamb}]=1.$
Then combining this with \eqref{eq:B5} yields 
$$E[\omg_{Lamb}]-\lambda\leq E_Q\leq E[\omg_{Lamb}]-  \frac34 \nrm{\rho_Q^{err}}_{L^1_*}+ {20\lambda},	
$$ 
which gives the impulse bound $$\nrm{\rho_Q^{err}}_{L^1_*}\leq 30 \lambda.$$
Let us denote 
$\kpp_Q^{main}	=\nrm{\rho_Q^{main}}_{L^2}$ and 
$\kpp_Q^{err}	=\nrm{\rho_Q^{err}}_{L^2}$.
Plugging this back into the first inequality of \eqref{eq: decom-energy} and combing \eqref{eq:B5} again, we obtain
$$C_L\mu[\omg_{Lamb}]\kpp[\omg_{Lamb}]-\lambda\leq E_Q\leq  8 \lambda  +C_L(\mu[\omg_{Lamb}] + \lambda)\kpp_Q^{main}\leq C_L\mu[\omg_{Lamb}]\kpp_Q^{main}+10\lambda.$$ 
This gives $$\kpp_Q^{main}\geq \kpp[\omg_{Lamb}]-\frac{11\lambda}{C_L \mu[\omg_{Lamb}] }\geq \kpp[\omg_{Lamb}]-55\lambda.$$
So we have $$(\kpp[\omg_{Lamb}]+\lambda)^2-( \kpp_Q^{err})^2\geq  \kpp_{Q}^2-( \kpp_Q^{err})^2=(\kpp_Q^{main})^2\geq (\kpp[\omg_{Lamb}]-55\lambda)^2,$$
which immediately yields
$\kpp_Q^{err}\leq 30\lambda^{1/2}.$ Then the $L^1$-norm estimate for $\rho_Q^{err}$ follow by a similar argument with 
\eqref{est:L2toL1}. The velocity estimate follows from \eqref{eq:vel-L-infty-alpha2} with $\alpha=1/2$.
\end{proof}

\subsection{Vertical speed and height control I}\label{subsec:vertical-speed}

\subsubsection{Vertical speed in local region}
We divide $Q$ into three time-independent disjoint  regions: \begin{equation*}
\begin{split}
	Q_{R} := \left\{ x_{1} \ge -R \right\}, \quad Q_{I} := (Q\backslash Q_{R}) \cap \left\{ x_{2} \le 2(x_{1} + D) \right\}, \quad Q_{II} := Q\backslash (Q_{R}\cup Q_{I}). 
\end{split}
\end{equation*} Note that $Q_{I} \cup Q_{II}$ is precisely where $\rho_{Q}^{err}$ is supported. We show that in this \textit{error region}, the vertical speed is small. 
\begin{lemma}[Velocity in the error region]\label{lem:vel-Q-decomp}
We have \begin{equation}\label{eq:vel-Q-decomp}
	\begin{split}
		\sup_{ x \in  {\mathbb R_+^2 \backslash  Q_{R}} } v_{2}(t,x) \le  {\lambda^{1/6}}\quad\mbox{and} \quad \sup_{ x \in Q_{I}  } v_{2}(t,x)
		\leq  \frac{80 (Ax_2)^{1/2}+ {400}}{ x_2}	{\lambda^{1/2}}
		+\lambda^{3/4}. 
	\end{split}
\end{equation}
\end{lemma}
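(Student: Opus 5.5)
The plan is to decompose the vertical velocity according to the splitting already in place. Since the frame shift $\dot p_1\bfe_1$ is horizontal,
\begin{equation*}
	v_2 = K_2[\rho_Q^{main}] + K_2[\rho_Q^{err}] + K_2[\tld\rho],
\end{equation*}
and the three pieces are estimated separately. The crucial structural fact is the sign of the kernel in \eqref{eq:K-def}:
\begin{equation*}
	\bfK_2(x,y)=\frac{1}{2\pi}\frac{x_1-y_1}{|x-y|^2}\frac{4x_2y_2}{|x-\bar y|^2},
\end{equation*}
which is $\le 0$ whenever $x_1<y_1$. For $x\in\bbR^2_+\setminus Q_R$ we have $x_1<-R<y_1$ for every $y$ in the support of $\rho_Q^{main}$. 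Hence the positive part of $\rho_Q^{main}$ contributes a \emph{nonpositive} amount to $v_2(t,x)$. This is the "push down" effect of the Lamb part, and it is what allows us to avoid an $O(\varep)$ loss from $\rho_Q^{main}-\omg_{Lamb}$. We only need an upper bound, so this piece is simply dropped.

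The negative part of $\rho_Q^{main}$ is dominated by $\rho^{(-)}$, which by Lemma \ref{lem:minus-small} has $L^1$ norm $<3\dlt$, and its $L^\infty$ norm is at most $A$. Lemma \ref{lem:vel-L-infty} with $\alp=1$ then bounds its velocity by $C(A\dlt)^{1/2}$, which is $\le\lambda^{3/4}$ because $\dlt\ll\lambda/(AH)$ by \eqref{small_dlt}. For the first claim of \eqref{eq:vel-Q-decomp} we use the crude $L^\infty$ bounds $\nrm{v_Q^{err}}_{L^\infty}\le CA^{1/4}\lambda^{3/8}$ from Lemma \ref{lem:error-small} and $\nrm{\tld v}_{L^\infty}\le CA^{1/2}\lambda^{1/4}$ from Lemma \ref{lem:rem-small}. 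Both are $\le\lambda^{1/6}/3$ once $\lambda$ is small relative to $A$, which is guaranteed by \eqref{small_lambda}. Summing the pieces gives $v_2\le\lambda^{1/6}$ on $\bbR^2_+\setminus Q_R$.

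For the refined bound on $Q_I$, the $\rho_Q^{main}$ contribution is handled exactly as above and yields the $\lambda^{3/4}$ term. For the other two pieces we use two elementary kernel bounds. Since $|x-\bar y|\ge x_2+y_2$ and $4x_2y_2\le (x_2+y_2)^2$, we have
\begin{equation*}
	|\bfK_2(x,y)|\le \frac{1}{2\pi|x-y|}, \qquad |\bfK_2(x,y)|\le \frac{C\,y_2}{x_2|x-y|}.
\end{equation*}

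\emph{Remainder $\tld\rho$.} It is supported in $\{y_1\le -D\}$, while $x\in Q_I$ satisfies $x_1+D\ge x_2/2$, so $|x-y|\ge x_2/2$. The first kernel bound together with $\nrm{\tld\rho}_{L^1}\le 60\lambda^{1/2}$ from Lemma \ref{lem:rem-small} gives a contribution $\le C\lambda^{1/2}/x_2$. This is the source of the $400\lambda^{1/2}/x_2$ term.

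\emph{Error $\rho_Q^{err}$.} Here there is no spatial separation, since $\rho_Q^{err}$ lives on $Q_I\cup Q_{II}$, which contains $x$. We therefore use the second kernel bound and split at a radius $r$:
\begin{equation*}
	\frac{C}{x_2}\int\frac{y_2|\rho_Q^{err}(y)|}{|x-y|}dy \le \frac{C}{x_2}\Big(A(x_2+r)r + \frac{30\lambda}{r}\Big).
\end{equation*}
On the near ball we use $\nrm{\rho_Q^{err}}_{L^\infty}\le A$ and $y_2\le x_2+r$; on the far region we use $\nrm{\rho_Q^{err}}_{L^1_*}\le 30\lambda$ from Lemma \ref{lem:error-small}. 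Choosing $r=(\lambda/(Ax_2))^{1/2}$ produces the term $C(Ax_2)^{1/2}\lambda^{1/2}/x_2$. The leftover $A r^2=\lambda/x_2$ is absorbed into the $\lambda^{1/2}/x_2$ term.

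I expect this error-part interaction to be the main obstacle. It is the only term without separation and without a favorable sign, and the $x_2$-dependence of the final bound comes entirely from optimizing $r$ there. Some care is also needed to track the explicit constants $80$ and $400$, which should come out of the numerical constants $30$ and $60$ in Lemmas \ref{lem:error-small} and \ref{lem:rem-small}.
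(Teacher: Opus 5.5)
Your decomposition of $v_2$, the sign argument for the positive main part, the crude $L^\infty$ bounds giving the first inequality, and the separation $|x-y|\ge x_2/2$ for $\tilde\rho$ on $Q_I$ are exactly what the paper does. The only real deviation is in the error piece, and there your argument as written does not cover all of $Q_I$.

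With $r=(\lambda/(Ax_2))^{1/2}$ the leftover term is $\frac{C}{x_2}\,Ar^2=\frac{C\lambda}{x_2^2}$. This is dominated by $400\lambda^{1/2}/x_2$ only when $x_2\gtrsim\lambda^{1/2}$. But $Q_I$ contains points of arbitrarily small height (any $x_1\in(-D,-R)$ with small $x_2$), where $r\gg x_2$ and the bound $y_2\le x_2+r$ is too lossy.

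The paper avoids this by splitting at the fixed radius $x_2/2$. On $\{|x-y|<x_2/2\}$ one has $y_2\le\frac32 x_2$, so $f=y_2|\rho_Q^{err}|\mathbf{1}_{\{|x-y|<x_2/2\}}$ satisfies $\|f\|_{L^1}\le 30\lambda$ and $\|f\|_{L^\infty}\le 2Ax_2$. Then $\int f(y)|x-y|^{-1}\,dy\le 6\|f\|_{L^1}^{1/2}\|f\|_{L^\infty}^{1/2}$ gives the $80(Ax_2\lambda)^{1/2}/x_2$ term with no remainder. The far part of $\rho_Q^{err}$ is handled together with $\tilde\rho$, using $|\bfK_2|\lesssim x_2^{-1}$ and the $L^1$ bounds.

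Alternatively, you can keep your choice of $r$ and treat small heights separately. For $x_2\le 400\lambda^{1/3}$, the right-hand side of the second inequality is at least $400\lambda^{1/2}/x_2\ge\lambda^{1/6}$, so it already follows from the first inequality. This range overlaps the region $x_2\gtrsim\lambda^{1/2}$ where your absorption is valid, since $\lambda^{1/2}\ll\lambda^{1/3}$.

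A smaller point concerns the negative main part. Using $\alpha=1$ there gives $C(A\delta)^{1/2}$. But \eqref{small_dlt} with a universal $C_4$ only yields $(A\delta)^{1/2}\lesssim(\lambda/H)^{1/2}$, which is not $\le\lambda^{3/4}$. This is harmless, since $\delta$ is chosen last and may be shrunk to $\delta\lesssim\lambda^{3/2}/A$. The paper's route is cleaner: take $\alpha=1/2$ together with $\|\omega^{(-)}\|_{L^2}<\delta$ from Lemma \ref{lem:minus-small}. This gives $CA^{1/4}\delta^{3/4}\le\lambda^{3/4}$ directly under \eqref{small_dlt}.
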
 
\begin{figure}
\centering
\includegraphics[width=0.5\linewidth]{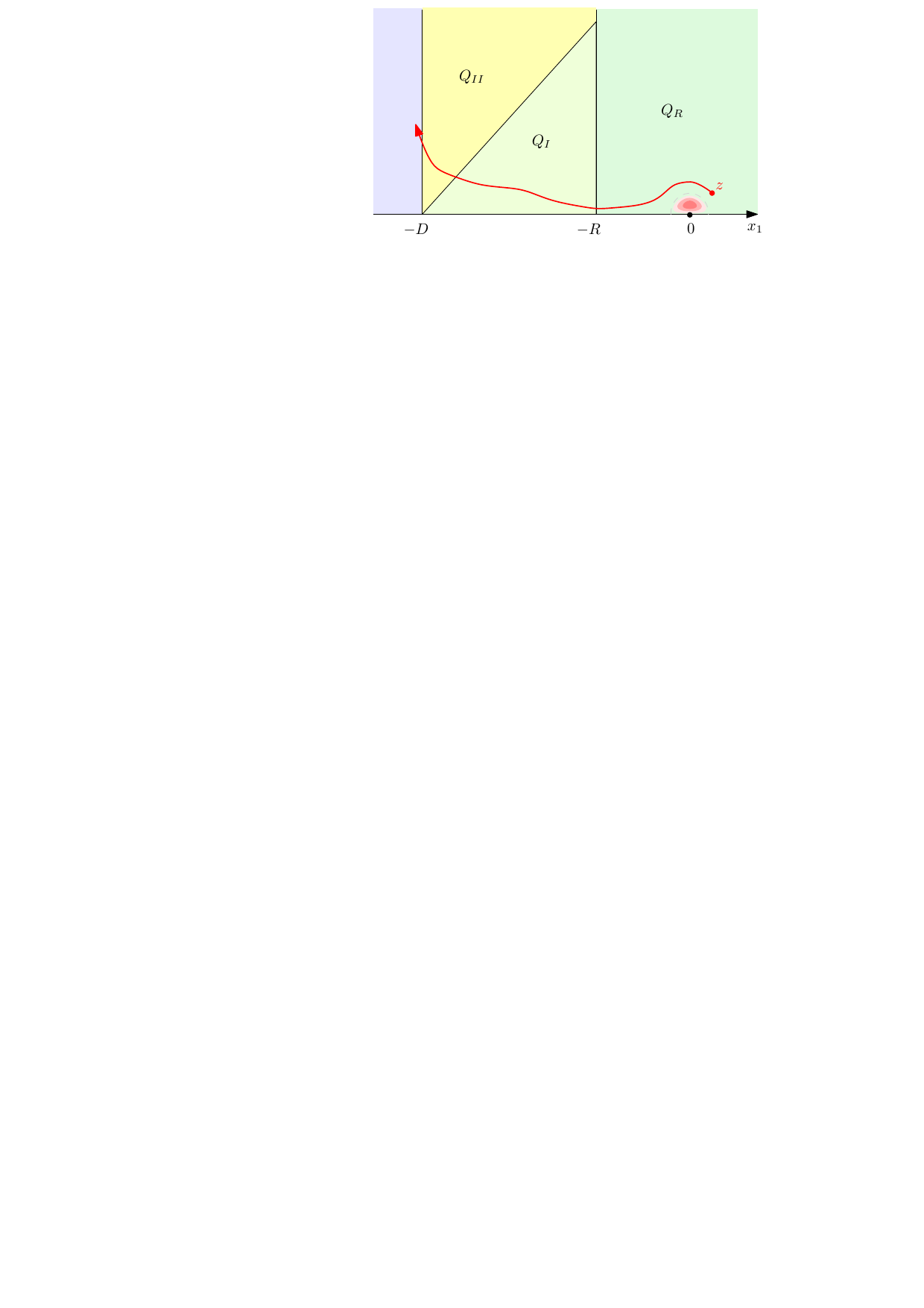}
\caption{Decomposition of $Q$ into $Q_{R}, Q_{I}$, and $Q_{II}$ (Figure not drawn to scale). A particle trajectory escaping $Q$ is shown, illustrating the ``push down'' effect of the Lamb dipole while the particle stays in $Q_{R}$.}
\label{fig:trajectory}
\end{figure}
\begin{remark} It is important that 
these estimates control $v_{2}$ from ABOVE  and \textit{not} $|v_{2}|$. (The estimates would be in general false for $|v_{2}|$.)  
\end{remark}
\begin{proof}  We decompose $\rho_{Q}^{main}=\rho_{Q}^{main,+}-\rho_{Q}^{main,-}$ with $\rho_{Q}^{main,+}=\max\{0,\rho_{Q}^{main}\}$. We analyze the contributions to $v_{2}$ from $\rho_{Q}^{main}$, $\rho_{Q}^{err}$, and $\tilde{\rho}$ separately. The key observation is that the main part $\rho_{Q}^{main,+}$ contributes negative velocity in the vertical direction. Precisely,
\begin{equation*}
	\begin{split}
		v_2(t, x) & = \frac{1}{2\pi}\int \frac{x_1-y_1}{|x-y|^2}\frac{4x_2y_2}{|x-\bar y|^2} \left(\rho_{Q}^{main,+}-\rho_{Q}^{main,-}+\rho_{Q}^{err}+\tilde \rho \right)(t,y) dy\\
		& =: v_{Q,2}^{main,+}+v_{Q,2}^{main,-}+v_{Q,2}^{err}+\tilde v_{2}.
	\end{split}
\end{equation*}

\medskip \noindent \textbf{Contribution from $\rho^{main}_{Q}$}.
For $x\in  \mathbb R_+^2 \backslash  Q_{R}$ and $y\in \mathrm{supp} (\rho_Q^{main})$, one has $x_1-y_1<0$, which gives $v_{Q,2}^{main,+} \le 0.$ Noticing $\nrm{\rho_{Q}^{main,-}}_{L^1\cap L^2( \mathbb R_+^2)}\leq \nrm{\omega^{(-)}}_{L^1\cap L^2( \mathbb R_+^2)} \leq 4\dlt$, we use \eqref{eq:vel-L-infty-alpha2} with $\alpha=1/2$  to get 
$$\nrm{v_{Q,2}^{main,-}}_{L^\infty( \mathbb R_+^2 \backslash  Q_{R})}\leq C\delta^{1/2}A^{1/4}\delta^{1/4}\leq\lambda^{3/4}.$$ Here we assumed $\dlt$ small if necessary, which is equivalent to taking $C_4$ small enough in \eqref{small_dlt}.

\medskip \noindent \textbf{Proof of the first inequality.} We now use $\nrm{\rho_{Q}^{err}+\tilde \rho}_{L^2}\leq 60 \lambda^{1/2}$ and
$\nrm{\rho_{Q}^{err}+\tilde \rho}_{L^1}\leq 120 \lambda^{1/2}$
from \eqref{eq:rem-small} and \eqref{eq:error-small}
to obtain
$$\nrm{v_{Q,2}^{err}+\tilde v_{2}}_{L^\infty(\mathbb R_+^2 \backslash  Q_{R})}\leq 
C\lambda^{3/8}A^{1/4}	
\leq \lambda^{1/4}$$ by using \eqref{eq:vel-L-infty-alpha2} with $\alpha=1/2$.
Here we assumed $\lambda$ small if necessary, which is equivalent to taking $C_3$ small enough in \eqref{small_lambda}.
Combining the above estimates yields the first inequality in \eqref{eq:vel-Q-decomp}.

\medskip \noindent \textbf{Proof of the second inequality.}  Now we prove an improved bound for $v_2$ in $Q_I$.  We decompose 
\begin{equation*}
	\begin{split}
		v_{Q,2}^{err}(t,x)+\tilde v_{2}(t,x) = \frac{1}{2\pi}\left(\int_{\{|x-y|<x_2/2\}}+\int_{\{|x-y|\geq x_2/2\}}\right) \frac{x_1-y_1}{|x-y|^2}\frac{4x_2y_2}{|x-\bar y|^2} \left(\rho_{Q}^{err}+\tilde \rho \right)(t,y) dy. 
	\end{split}
\end{equation*}
For the first integral,
the key observation is that for $x\in Q_I$, we have $\{y:\ |x-y|<x_2/2\}\subset Q$ so that $\tilde{\rho}$ vanishes there and,
using $|x-\bar y|\geq x_{2} + y_{2} \ge  x_2$, we have
\begin{equation*}
	\begin{split}
		\left| \frac{1}{2\pi} \int_{\{|x-y|<x_2/2\}}  \frac{x_1-y_1}{|x-y|^2}\frac{4x_2y_2}{|x-\bar y|^2}\rho_{Q}^{err}(t,y) dy\right|&\leq \frac{1}{x_2} \int_{\{|x-y|<x_2/2\}}  \frac{ y_2 |\rho_{Q}^{err}(t,y)|}{|x-y|} dy\\&
		\leq	\frac{6}{x_2}	 \sqrt{30\lambda}\sqrt{2x_2A}	
		\leq  \frac{80 (Ax_2\lambda)^{1/2}}{x_{2}},
	\end{split}
\end{equation*}
where we have used \eqref{eq:error-small}, \eqref{eq:B2}, and the inequality $$\int_{\mathbb R^2} \frac{1}{|x-y|} f(y) dy \leq 6 \nrm{f}_{L^1(\mathbb R^2)}^{1/2} \nrm{f}_{L^\infty(\mathbb R^2)}^{1/2}$$ with $f(y)=y_2|\rho_{Q}^{err}(t,y)|\mathbf{1}_{\{|x-y|<x_2/2\}}(y)$. \\

For the second integral, we use \begin{equation*}
	\begin{split}
		\frac{x_1-y_1}{|x-y|^2}\frac{4x_2y_2}{|x-\bar y|^2} \le \frac{16}{x_{2}}
	\end{split}
\end{equation*} in the region $\{|x-y|\geq x_2/2\}$ to obtain 
\begin{equation*}
	\begin{split}
		\left| \frac{1}{2\pi} \int_{\{|x-y|\geq x_2/2\}}  \frac{x_1-y_1}{|x-y|^2}\frac{4x_2y_2}{|x-\bar y|^2} 
		\left(\rho_{Q}^{err}+\tilde \rho \right)(t,y)
		dy\right|\leq \frac{8}{\pi x_2}  \nrm{\rho_{Q}^{err}+\tilde \rho}_{L^1}  
		\leq
		\frac{400
			\lambda^{1/2}}{ x_2}.		
	\end{split}
\end{equation*}
This proves the second inequality in \eqref{eq:vel-Q-decomp}.
\end{proof}

\subsubsection{Height control for local particles}
Here we control the height (namely, the $x_{2}$-coordinate of the trajectory) of fluid particles until they stay in $Q$, which visited the square $S_R$. 

\begin{lemma}\label{lem:S_R_to_H}
Let $z\in Q$. Suppose $\Phi(t_0,z)\in S_R$ for some $t_0\in[0,T]$. Then
as long as the particle stays in $Q$, its $x_2$-height is bounded above by $H$.  More precisely,
if 
$$\Phi(t_0,z)\in S_R$$ for some $t_0\in[0,T]$, then for any $\tau \le T$, we get
$$\sup_{t\in[t_{0},\tau]}\Phi_2(t,z)\leq H \qquad \mbox{as long as} \qquad \Phi_1(\tau,z) > -D.$$  
\end{lemma}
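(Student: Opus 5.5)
Fix $z\in Q$ and the first time $t_0$ with $\Phi(t_0,z)\in S_R$, so $\Phi_2(t_0,z)\le R$. The plan is to follow the trajectory forward in time on $[t_0,\tau]$, tracking the three regions $Q_R$, $Q_I$, $Q_{II}$ of Lemma \ref{lem:vel-Q-decomp}. The inputs are the global velocity estimate \eqref{eq:borderline-v} of Lemma \ref{lem:borderline-v} and its consequence \eqref{eq:vel_out_S_R}; the one-sided bounds on $v_2$ in \eqref{eq:vel-Q-decomp}; and the horizontal drift $v_1\le -1/2$ outside $S_R$.

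\textbf{Step 1: the particle while it stays in $Q_R$.} While $\Phi_1\ge -R$, we use $|v|\le |u_{Lamb}|+1+2\varep^{1/2}$. We expect the particle to exit to the left, since $v_1\le -3/4$ outside $S_R$. Its height can grow in two ways: through the Lamb velocity near the dipole, and through the $O(\varep^{1/2})$ perturbation in the strip $\{x_1\in[-R,R]\}$. The traversal time of this strip is $O(R)$, so the latter contribution is $O(R\varep^{1/2})\le 1$. A particle above height $R$ with $|x_1|\le R$ moves left at speed $\ge 3/4$ and has $|v_2|\le 2/R^2+2\varep^{1/2}$. The delicate point is that a particle could return into $S_R$, or stay long inside it. Inside $S_R$, however, the height is at most $R$ by definition, so we only need to control excursions out of $S_R$ within $Q_R$. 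On each such excursion $v_1\le -3/4$ and $|v_2|\le 1/4$. If the particle leaves through the top of $S_R$, then $|x_1|\le R$ and it exits the strip $\{|x_1|\le R\}$ within time $8R/3$; during that time it gains height at most $(2/R^2+2\varep^{1/2})\cdot 3R\le 1$. Hence while $\Phi_1\ge -R$ after $t_0$, we get $\Phi_2\le R+1$ (plus at most a bounded amount from the region $x_1>R$, which is ruled out because particles leaving $S_R$ move left). Call this bound $h_0:=R+1$ at the last time $t_1$ when $\Phi_1=-R$.

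\textbf{Step 2: after crossing $x_1=-R$.} The particle never returns to $Q_R$, because $v_1\le -1/2$ on $\{x_1\le -R\}$. From time $t_1$ it therefore travels from $x_1=-R$ to $x_1=-D$ in time at most $2D=2C_0\lambda^{-1/2}$. The first bound of \eqref{eq:vel-Q-decomp}, $v_2\le\lambda^{1/6}$, is too weak over this long time: it only yields a height increase of order $\lambda^{-1/3}$. We therefore use the refined bound in $Q_I$. Note that $x\in Q_{II}$ means $x_2>2(x_1+D)$. Starting at height about $R$ with $x_1+D\approx D\gg R$, the particle lies in $Q_I$ initially. The key claim is that it stays in $Q_I$ until $x_2\le H$ fails, which never happens. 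Consider the function $F(t)=\Phi_2-2(\Phi_1+D)$. Since $\dot\Phi_1\le -1/2$ we have $\dot F\ge v_2+1$. This has the wrong direction for a naive argument, since $F$ can increase, so we argue directly on the height instead. In $Q_I$ with $x_2\ge 1$, the bound $v_2\le 80(A\lambda)^{1/2}x_2^{-1/2}+400\lambda^{1/2}+\lambda^{3/4}\le CA^{1/2}\lambda^{1/2}$ holds. Integrating over time at most $2D=2C_0\lambda^{-1/2}$ gives a height increase of at most $2C_0CA^{1/2}$. Hence $\Phi_2\le R+1+CC_0A^{1/2}\le H$ by \eqref{large_H}, provided $C_2$ is large; the same argument covers $x_2\le 1$. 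It remains to check that the particle does not enter $Q_{II}$. Entering $Q_{II}$ requires $x_2>2(x_1+D)$, that is, $x_1+D<H/2$; near the exit this can indeed happen. For this final stretch, the particle in $Q_{II}$ needs time at most $2\cdot H/2\cdot 2=2H$ to reach $x_1=-D$, with $v_2\le\lambda^{1/6}$, so it gains at most $2H\lambda^{1/6}\le 1$. Therefore we aim for the bound $H':=R+2+CC_0A^{1/2}\le H$ at the stage of Step 2, so that the particle is automatically in $Q_I$ until $x_1+D\le H/2$, and the final $Q_{II}$ stretch costs only $O(1)$.

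\textbf{Main obstacle.} The hardest step is Step 2: turning the one-sided, height-dependent bound in $Q_I$ into a uniform height bound over the long time $O(\lambda^{-1/2})$, while ensuring that the trajectory does not enter $Q_{II}$ prematurely. The resolution is the balance between the exit time $O(\lambda^{-1/2})$ and the $O(\lambda^{1/2})$ vertical speed in $Q_I$. The constants should be arranged so that the accumulated height is $O(R+C_0A^{1/2})$, which the choice $H\ge C_2A$ absorbs; we also use the fact that $v_2$ is bounded only from above, which is what the argument needs.
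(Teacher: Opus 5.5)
Your proposal is correct and follows essentially the same route as the paper. The shared ingredients are the split of $Q$ into $Q_R$, $Q_I$, $Q_{II}$; the leftward drift $v_1\le -1/2$ outside $S_R$, which gives no return to $Q_R$ and an exit time $O(D)=O(\lambda^{-1/2})$; the one-sided $O(\lambda^{1/2})$ bound on $v_2$ in $Q_I$ balancing that exit time; and a short final $Q_{II}$ stretch handled by $v_2\le\lambda^{1/6}$. The differences are only technical: you bound excursions out of the top of $S_R$ directly instead of using the paper's contradiction argument, and in $Q_I$ you integrate the uniform bound $v_2\lesssim A^{1/2}\lambda^{1/2}$ (for $x_2\ge 1$) linearly, whereas the paper uses the $x_2^{-1}$ factor and integrates $\frac{d}{dt}\Phi_2^2$; both height bounds are absorbed by $H\ge C_2A$.
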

\begin{proof}
Let $z$ and $t_0$ satisfy the assumption.
The proof is divided into four steps.

\medskip \noindent \textbf{Claim 1}. We have $
\Phi_1(t,z)\leq R$  for any $t\in[t_0,T].$

\medskip \noindent 	This is immediate from the fact $v_1<-1/2$ on the line $\{x_1=R\}$.

\medskip \noindent \textbf{Claim 2}. As long as the trajectory stays in $Q_{R}$, its height is bounded by $4R$ and $H/6$. Precisely, $$
\Phi_2(t,z)\leq 4R<H/6\quad\mbox{for any}\quad t\in[t_0,T] \quad \mbox{satisfying }\quad \Phi(t,z)\in Q_R.
$$	

\medskip \noindent 	 The inequality $4R<H/6$ holds once we choose $C_2$ large in \eqref{large_H}.  For the first inequality, for a contradiction, we assume that there exists $t_1\in (t_0, T]$ such that $\Phi_2(t_1,z)> 4R$ and $\Phi_1(t_1,z)\in [-R, R]$. We take $t_2\in (t_0, t_1)$ to be largest time before $t_{1}$ such that $\Phi_2(t_2,z)=2R$. The velocity satisfies $v_1<-1/2$ on the line $\{x_1=-R\}$, so for all $t\in (t_2, t_1)$, we have $$ \Phi_1(t,z)\in [-R, R]\quad\mbox{ and }\quad \Phi_2(t,z)>2R.$$ On the other hand, by using \eqref{eq:borderline-v}, for any $y\in [-R, R]\times (2R, +\infty)$, we have \begin{equation}\label{eq:v-ests}
	\begin{split}
		v_1(y)\leq -1/2 \quad \text{ and }\quad |v_2(y)| \leq 1/4.
	\end{split}
\end{equation} 
Therefore, we get $t_1-t_2 \geq 4 (\Phi_2(t_1,z)-\Phi_2(t_2,z))\geq 8R.$ Hence $$-R\leq \Phi_1(t_1,z) \leq \Phi_1(t_2,z)-\frac12 \times 8R\leq -3R <-R,$$ which is a contradiction. This proves \textbf{Claim 2}.

\medskip \noindent \textbf{Claim 3}. As long as the trajectory stays in $Q_{I}$, its height is bounded by $H/4$. Precisely,
$$
\Phi_2(t,z)< H/4\quad\mbox{for any}\quad t\in[t_0,T] \quad \mbox{satisfying }\quad \Phi(t,z)\in Q_I.
$$		 
\medskip \noindent  To derive a contradiction, we  assume that there exists the smallest $t_3\in (t_0, T)$ such that $$\Phi(t_3,z)\in Q_I \quad\mbox{and}\quad \Phi_2(t_3,z)=H/4.$$
Recall that  $6R<H/4$.
Let $t_4\in (t_0, t_3)$ be the largest time before $t_{3}$ such that $\Phi_2(t_4,z)=5R$. Then we must have $$\frac{H}{4}\geq\Phi_2(t,z)\geq 5R \quad \mbox{ and }\quad \Phi(t,z)\in Q_I \quad \mbox { for all }\quad t\in [t_4,t_3],$$ since the velocity still satisfies \eqref{eq:v-ests} on $Q_I$ and in particular any particle will never come back to $Q_{I}$ once it leaves it. Notice that $0<\Phi_1(t_4,z)-\Phi_1(t_3,z)\leq D= C_0\lambda^{-1/2}$ recalling our choice of $D$ in \eqref{eq:D-def}. So we have $$0<t_3-t_4\leq 2 C_0\lambda^{-1/2}.$$ 
We may take $\lambda$ smaller such that  {$2C_0\lambda^{1/4}<R$}. By \eqref{eq:vel-Q-decomp}, for $t\in [t_4,t_3]$, we find 
$$\frac{d}{dt} \Phi_2(t,z)\leq  \frac{80 (A \Phi_2(t,z))^{1/2}+400}{ \Phi_2(t,z)}	{\lambda^{1/2}}
+\lambda^{3/4} 
$$
Note that $ \Phi_2(t,z)-  \lambda^{3/4}(t-t_4)\geq 5R-R=4R$ for any $t\in [t_4, t_3]$. We deduce from the above inequality that 
$$\frac{d}{dt} (\Phi_2(t,z)-\lambda^{3/4}(t-t_4)) \leq  \frac{80 (A H/4)^{1/2}+400}{\Phi_2(t,x)-\lambda^{3/4}(t-t_4)}\lambda^{1/2}.$$
This gives 
$$
\left(\Phi_2(t_3,z)-\lambda^{3/4}(t_3-t_4)\right)^2
\leq (5R)^2+4C_0 [80 (A H/4)^{1/2}+400],
$$	which implies
$$\Phi_2(t_3,z)\leq \left(
(5R)^2+4C_0 [80 (A H/4)^{1/2}+400]
\right)^{1/2}+R.
$$
Then, by taking a larger constant $C_2$ in \eqref{large_H} if necessary, we can guarantee that $H$ is large enough so that the right-hand side of the above is smaller than $H/8$. This contradicts $\Phi_2(t_3,z)=H/4$.

\medskip \noindent \textbf{Claim 4}. Lastly, we have 
$$
\Phi_2(t,z)< H/2\quad\mbox{for any}\quad t\in[t_0,T] \quad \mbox{satisfying }\quad \Phi(t,z)\in Q_{II}.
$$		

\medskip\noindent 
Consider any $t_5\in (t_0, T]$ such that $\Phi(t_5,z)\in Q_{II}$. Let $t_6\in (t_0, t_5)$ be the largest time before $t_{5}$ such that $ \Phi(t_6,z)\in \{x_2=2(x_1+D)\}$. That is, $\Phi_2(t_6,z)=2(\Phi_1(t_6,z)+D)$. Then, \textbf{Claims 2 and 3} give $\Phi_2(t_6,z)\leq H/4$ and hence $$0<\Phi_1(t_6,z)-\Phi_1(t_5,z)\leq\Phi_1(t_6,z)+D=\frac12 \Phi_2(t_6,z)\leq \frac{H}{8}.$$
This implies $t_5-t_6\leq H/4$. Then, using the first inequality of \eqref{eq:vel-Q-decomp}, we obtain \begin{equation*}
	\begin{split}
		\Phi_2(t_5,z)\leq \Phi_2(t_6,z)+ \lambda^{1/6}(t_5-t_6) < \frac{H}{2}, 
	\end{split}
\end{equation*} which establishes \textbf{Claim 4}, which also finishes the proof.
\end{proof}

As a consequence, we get a lower bound of the impulse $\mu_Q(t)$:
\begin{lemma} \label{lem:impulse-L1star-Q}  We have \begin{equation}\label{eq:impulse-L1star-Q}
	\begin{split}
		0 \le 
		|\mu|_{Q}(t)		
		- \mu_{Q}(t) \leq  \frac{\lambda}{4}\quad\mbox{and}\quad \mu_Q(t) \ge \mu[\omg_{Lamb}] - 12\lambda.
	\end{split}
\end{equation}
\end{lemma}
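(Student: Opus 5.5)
The plan is to write $|\mu|_Q-\mu_Q$ as twice the weighted mass of the negative part in $Q$, and to bound that mass by splitting the particles in $Q$ at time $t$ into far and local ones. Far particles are controlled by \eqref{eq:B2}. Local particles are controlled by the height bound of Lemma \ref{lem:S_R_to_H} together with the smallness of the negative part from Lemma \ref{lem:minus-small}. The lower bound on $\mu_Q$ then follows by combining this with Lemma \ref{lem:rem-small}.

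\textbf{Step 1: reduction to the negative part.} Write $\rho=\rho^{(+)}-\rho^{(-)}$. Then
\begin{equation*}
|\mu|_Q(t)-\mu_Q(t)=2\int_Q x_2\,\rho^{(-)}(t,x)\,dx\ \ge 0 .
\end{equation*}
So it suffices to show $\int_Q x_2\rho^{(-)}(t,x)\,dx\le \lambda/8$.

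\textbf{Step 2: every particle in $Q$ at time $t$ started in $Q$.} By the remark after Lemma \ref{lem:borderline-v}, $v_1<-1/2$ on $\{x_1=-D\}$. Hence no trajectory crosses $\partial Q$ from left to right, and every particle with $\Phi(t,z)\in Q$ satisfies $z\in Q$ and $\Phi(s,z)\in Q$ for all $s\in[0,t]$. Consequently $Q\cap\Phi(t,\cdot)^{-1}(Q)$ is the disjoint union of $A^{loc,t}\cap\Phi_t^{-1}(Q)$ and $A^{far,t}\cap\Phi_t^{-1}(Q)$, and we can split
\begin{equation*}
\int_Q x_2\rho^{(-)}(t,x)\,dx=\int_{Q}x_2\rho^{(-)}\mathbf{1}_{\Phi(t,A^{far,t})}\,dx+\int_{Q}x_2\rho^{(-)}\mathbf{1}_{\Phi(t,A^{loc,t})}\,dx .
\end{equation*}

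\textbf{Step 3: bounding the two pieces.} The far piece is at most $\int_Q x_2|\rho|\mathbf{1}_{\Phi(t,A^{far,t})}\le\lambda/10$ by \eqref{eq:B2}.

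For a local particle $z$ there is some $t_0\le t$ with $\Phi(t_0,z)\in S_R$. By Step 2 the trajectory stays in $Q$ on $[t_0,t]$, so Lemma \ref{lem:S_R_to_H} gives $\Phi_2(t,z)\le H$. Since $\rho^{(-)}$ is transported and the flow is area-preserving, the local piece is at most
\begin{equation*}
H\,\nrm{\rho^{(-)}(t)}_{L^1}=H\,\nrm{\omg^{(-)}(t)}_{L^1}<3H\dlt ,
\end{equation*}
using Lemma \ref{lem:minus-small}. By \eqref{small_dlt} with $C_4$ large, $3H\dlt\le\lambda/40$.

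Adding the two pieces gives $\int_Q x_2\rho^{(-)}\le \lambda/10+\lambda/40=\lambda/8$, which proves the first inequality.

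\textbf{Step 4: lower bound on $\mu_Q$.} From \eqref{eq:kpp-mu-Q-lower-bounds} we have $|\mu|_Q(t)\ge\mu[\omg_{Lamb}]-9\lambda$. Combining with Step 3,
\begin{equation*}
\mu_Q(t)\ge |\mu|_Q(t)-\frac{\lambda}{4}\ge \mu[\omg_{Lamb}]-\frac{37}{4}\lambda\ge \mu[\omg_{Lamb}]-12\lambda .
\end{equation*}

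The main point to verify carefully is Step 2: the one-way crossing of $\{x_1=-D\}$ must hold so that local particles remain in $Q$ on $[t_0,t]$, which is exactly the hypothesis Lemma \ref{lem:S_R_to_H} needs. Once that is in place, the rest is bookkeeping.
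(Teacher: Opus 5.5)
Your proposal is correct and follows the paper's proof essentially step for step. You write $|\mu|_Q-\mu_Q=2\int_Q x_2\rho^{(-)}$, split into local and far particles, bound the far part by \eqref{eq:B2} and the local part by $3H\dlt$ via Lemma \ref{lem:S_R_to_H} and Lemma \ref{lem:minus-small}, and conclude with \eqref{eq:kpp-mu-Q-lower-bounds}. Your Step 2 (one-way crossing of $\{x_1=-D\}$, so that $Q\subset\Phi(t,Q)$ and the local/far split exhausts $Q$) makes explicit a point the paper uses without comment.
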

\begin{proof}
We first note 
\begin{equation*}
	\begin{split}
		|\mu|_{Q}(t)		
		- \mu_{Q}(t) = \int_{Q} 2x_{2} \rho_{Q}^{(-)}(t,x)dx\ge 0. 
	\end{split}
\end{equation*} Then, from \eqref{eq:L2-neg-small} and \eqref{eq:B2} together with Lemma \ref{lem:S_R_to_H}, we bound 
\begin{equation*}
	\begin{split}
		\int_{Q} 2x_{2} \rho_{Q}^{(-)}(t,x)dx 
		& \leq
		2\left(\int_{Q\cap \Phi(t, A^{loc,t})\}}x_2\rho^{(-)}(t,x) dx +\int_{Q\cap\Phi(t, A^{far,t})}x_2|\rho(t,x)| dx	\right)	\\
		& \leq 
		2(H \times (3\delta)+\frac{\lambda}{10})\leq \frac{1}{4}\lambda.
	\end{split}
\end{equation*}	Here we assume $\dlt$ small enough to get the last inequality.	 
Then, by using \eqref{eq:kpp-mu-Q-lower-bounds}, we obtain
\begin{equation*}
	\begin{split}
		\mu_Q(t) \ge \mu[\omg_{Lamb}] - 12\lambda . \qedhere 
	\end{split}
\end{equation*} 
\end{proof}

\subsection{Vertical speed and height control II}\label{subsec:height} 
\subsubsection{Vertical speed in far region}
\begin{lemma}\label{lem:v_2_on_S}
For any $t\in[0,T]$,
\begin{equation}\label{eq:v_2_on_S}
	\sup_{-D\leq x_1\leq -R,\, x_2\geq 4H}v_2(t,x)\leq \lambda^{1/2}.
\end{equation}
\end{lemma}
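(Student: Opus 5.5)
Following the proof of Lemma \ref{lem:vel-Q-decomp}, we split
\[
v_2 = v_{Q,2}^{main,+}+v_{Q,2}^{main,-}+v_{Q,2}^{err}+\tld v_2
\]
and bound each piece from above at a point $x$ with $-D\le x_1\le -R$ and $x_2\ge 4H$.

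\emph{Main parts.} Since $\rho_Q^{main,+}$ is supported in $\{y_1>-R\}$, we have $x_1-y_1<0$, so $v_{Q,2}^{main,+}\le 0$. As before, $|v_{Q,2}^{main,-}|\le\lambda^{3/4}$ by Lemma \ref{lem:minus-small} and \eqref{eq:vel-L-infty-alpha2}.

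\emph{Error and remainder.} Here the crude $L^\infty$ bound gives only $\lambda^{1/4}$, so the structure of the kernel must be used. Split the $y$-integral into the near region $\{|x-y|<x_2/2\}$ and the far region $\{|x-y|\ge x_2/2\}$.

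In the far region the kernel is bounded by $16/x_2\le 4/H$. Integrating against $\rho_Q^{err}+\tld\rho$, whose $L^1$ norm is $O(\lambda^{1/2})$ by \eqref{eq:rem-small} and \eqref{eq:error-small}, gives $O(\lambda^{1/2}/H)$. This is below $\lambda^{1/2}/2$ once $H$ is large.

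In the near region we have $y_2\ge x_2/2\ge 2H$. By Lemma \ref{lem:S_R_to_H}, vorticity of $\rho_Q$ sitting at height $>H$ cannot belong to local particles, so it lies in $\Phi(t,A^{far,t})$. Hypothesis \eqref{eq:B2} then gives $\int y_2|\rho_Q|\le\lambda/10$ over this set. Using $|\bfK(x,y)|\le \frac{1}{x_2}\cdot\frac{y_2}{|x-y|}$ as in the previous lemma, together with the interpolation $\int|x-y|^{-1}f\le 6\nrm f_{L^1}^{1/2}\nrm f_{L^\infty}^{1/2}$ applied to $f=y_2|\rho_Q|\mathbf 1_{\{|x-y|<x_2/2\}}$ (where $\nrm f_{L^\infty}\le \tfrac32x_2A$), the contribution is bounded by
\[
\frac{C}{x_2}(\lambda\, x_2A)^{1/2}=C(\lambda A/x_2)^{1/2}\le C\lambda^{1/2}(A/H)^{1/2}.
\]
This is small relative to $\lambda^{1/2}$ since $H\ge C_2A$.

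\emph{The part $\tld\rho$ in the near region (main obstacle).} The near ball can cross $\{y_1=-D\}$, where $\tld\rho$ lives and \eqref{eq:B2} says nothing. I would first try the crude approach. Particles of $\tld\rho$ were either initially outside $Q$ or left $Q$. In the first case they carry small weighted mass from \eqref{assump_moment}, and this is conserved along trajectories only in a weak sense. In the second case, a particle that left $Q$ at height $\ge 2H$ did so as a far particle, and its contribution to \eqref{eq:B2} gives its impulse at exit time.

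Controlling its height after exit seems delicate. Here I would use that $|v_2|\le1/4$ while $v_1\le -1/2$ outside $S_R$, so the height increases by at most half the horizontal distance travelled. With $\nrm{\tld\rho}_{L^2}\le30\lambda^{1/2}$ and the constraint $y_2\ge 2H$, the same interpolation gives a bound by $\frac{C}{x_2}(x_2\cdot\text{mass})^{1/2}$. Alternatively, one can simply use $|\bfK|\le C/|x-y|$ with $L^1\cap L^\infty$ interpolation on the small set. Since $\nrm{\tld\rho}_{L^1}\le 60\lambda^{1/2}$, this yields only $O(A^{1/2}\lambda^{1/4})$, which is not enough.

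So the honest plan is to bound $\int_{\{y_2\ge2H\}} y_2|\tld\rho|$ by $O(\lambda)$. To do this, I would trace each such particle back either to its initial position, using the moment bound $\dlt$, or to its exit from $Q$ as a far particle, using \eqref{eq:B2} over a time window in which the height gain is at most half the horizontal displacement. If this cannot be made to work, I would accept a weaker bound of the form $\lambda^{1/2}$ times a small constant. I regard this step as the crux; summing the four pieces then gives \eqref{eq:v_2_on_S}.
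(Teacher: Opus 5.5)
Your treatment of the main parts, of the far region $\{|x-y|\ge x_2/2\}$, and of $\rho_Q^{err}$ in the near ball is fine. There, using \eqref{eq:B2} with Lemma~\ref{lem:S_R_to_H} is a legitimate substitute for the paper's bound $\nrm{\rho_Q^{err}}_{L^1_*}\le 30\lambda$. But you never bound the contribution of $\tld\rho$ near $x$, which you yourself call the crux, so the proof has a genuine gap. ``Accepting $\lambda^{1/2}$ times a small constant'' is not an argument.

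The route you sketch would also not close, for three reasons:
\begin{enumerate}
\item The target $\int_{\{y_2\ge 2H\}}y_2|\tld\rho|=O(\lambda)$ is not something the bootstrap provides. Vorticity that left $Q$ long ago may have drifted upward, since only $v_2\le\lambda^{1/6}$ is known there.
\item \eqref{eq:B2} at time $s$ only controls far particles that are \emph{inside} $Q$ at time $s$. It therefore does not bound the total impulse carried out of $Q$ by particles exiting at different times.
\item The height control for far particles (Lemma~\ref{lem:far_cont}) is proved \emph{from} the present lemma, so invoking it here would be circular.
\end{enumerate}

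The missing idea is to split $\tld\rho$ by particle origin, and to use distance rather than weight for the bulk.

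\emph{Particles starting outside $S_R$.} By \eqref{assump_moment} and $|p_1(0)|\le 5\dlt$, their total mass is $\int_{\bbR^2_+\setminus S_R}|\rho_0|\le\dlt$. The interpolation $\int|x-y|^{-1}f\le 6\nrm{f}_{L^1}^{1/2}\nrm{f}_{L^\infty}^{1/2}$ then bounds their contribution by $6\sqrt{A\dlt}\ll\lambda^{1/2}$, with no height information needed.

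\emph{Particles starting in $S_R$.} By Lemma~\ref{lem:S_R_to_H} they exit $Q$ at height $\le H$. Afterwards they can be high only if they are far to the left of $\{y_1=-D\}$. So for them $|\bfK_2(x,y)|\le(2\pi|x-y|)^{-1}$ together with $\nrm{\tld\rho}_{L^1}\le 60\lambda^{1/2}$ suffices.

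The paper implements this with the regions $U_{lower}$, $U_{upper}$, $U_{left}$, built from a strip of width $\lambda^{-1/6}$ and the bound $v_2\le\lambda^{1/6}$ from Lemma~\ref{lem:vel-Q-decomp}. One has $|x-y|\ge H$ on $U_{lower}$ and $|x-y|\ge\lambda^{-1/6}$ on $U_{left}$, and $U_{upper}$ contains no particle that started in $S_R$.

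Your own remark that outside $S_R$ the height grows by at most half the horizontal displacement would also do the job. An $S_R$-origin particle at height $y_2\ge 2H$ satisfies $-D-y_1\ge 2(y_2-H)\ge 2H$. One at height $y_2<2H$ has $|x-y|\ge x_2-y_2>2H$. So in all cases $|x-y|\ge 2H$, and the contribution is $O(\lambda^{1/2}/H)$.
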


\begin{proof}
Consider any $x$ satisfying $-D\leq x_1\leq -R$ and $x_2\geq 4H$.
As in the proof of Lemma \ref{lem:vel-Q-decomp}, we decompose \begin{equation*}
	\begin{split}
		v_2(t, x) & = \frac{1}{2\pi}\int \frac{x_1-y_1}{|x-y|^2}\frac{4x_2y_2}{|x-\bar y|^2} \left(\rho_{Q}^{main,+}-\rho_{Q}^{main,-}+\rho_{Q}^{err}+\tilde \rho \right)(t,y) dy\\
		& =: v_{Q,2}^{main,+}+v_{Q,2}^{main,-}+v_{Q,2}^{err}+\tilde v_{2}.
	\end{split}
\end{equation*} Then as before,  we get $v_{Q,2}^{main,+} \le 0$ and  
$\nrm{v_{Q,2}^{main,-}}_{L^\infty}\leq \lambda^{3/4}.$\ \\

To estimate $v_{Q,2}^{err}$, we decompose
\begin{equation*}
	\begin{split}
		v_{Q,2}^{err}(t,x)  = \frac{1}{2\pi}\left(\int_{\{|x-y|<x_2/2\}}+\int_{\{|x-y|\geq x_2/2\}}\right) \frac{x_1-y_1}{|x-y|^2}\frac{4x_2y_2}{|x-\bar y|^2} \rho_{Q}^{err} (t,y) dy. 
	\end{split}
\end{equation*}
Then, by the exactly same way of the proof of Lemma \ref{lem:vel-Q-decomp}, we get
$$
|v_{Q,2}^{err}(t,x) |\leq \frac{80 (Ax_2)^{1/2}+400}{ x_2}	{\lambda^{1/2}}
\leq \left(\frac{40 A^{1/2} }{\sqrt{H}}+\frac{100}{H}\right)\cdot	{\lambda^{1/2}}\leq \frac{\lambda^{1/2}}{10},
$$ where we assume $H$ large enough in \eqref{large_H}.\\ 

It remains to control the contribution from $\tld{\rho}$.
First we decompose $\mathbb{R}^2_+\setminus Q$ into
$$
\mathbb{R}^2_+\setminus Q=U_{lower}\cup U_{upper}\cup U_{left},
$$ where
\begin{equation*}\begin{split}
		&U_{lower}:=\{y\in\mathbb{R}^2_+\,:\,-D- {\lambda^{-1/6}}\leq y_1\leq -D\quad\mbox{and}\quad 
		y_2\leq 3H
		\},\\ &U_{upper}:=\{y\in\mathbb{R}^2_+\,:\,-D-{\lambda^{-1/6}}\leq y_1\leq -D\quad\mbox{and}\quad 
		y_2> 3H
		\},\quad \mbox{and}\\ &   U_{left}:=\{y\in\mathbb{R}^2_+\,:\,y_1<-D-{\lambda^{-1/6}}\},
	\end{split}
\end{equation*} see Figure \ref{fig:far}.  
\begin{figure}
	\centering
	\includegraphics[width=0.5\linewidth]{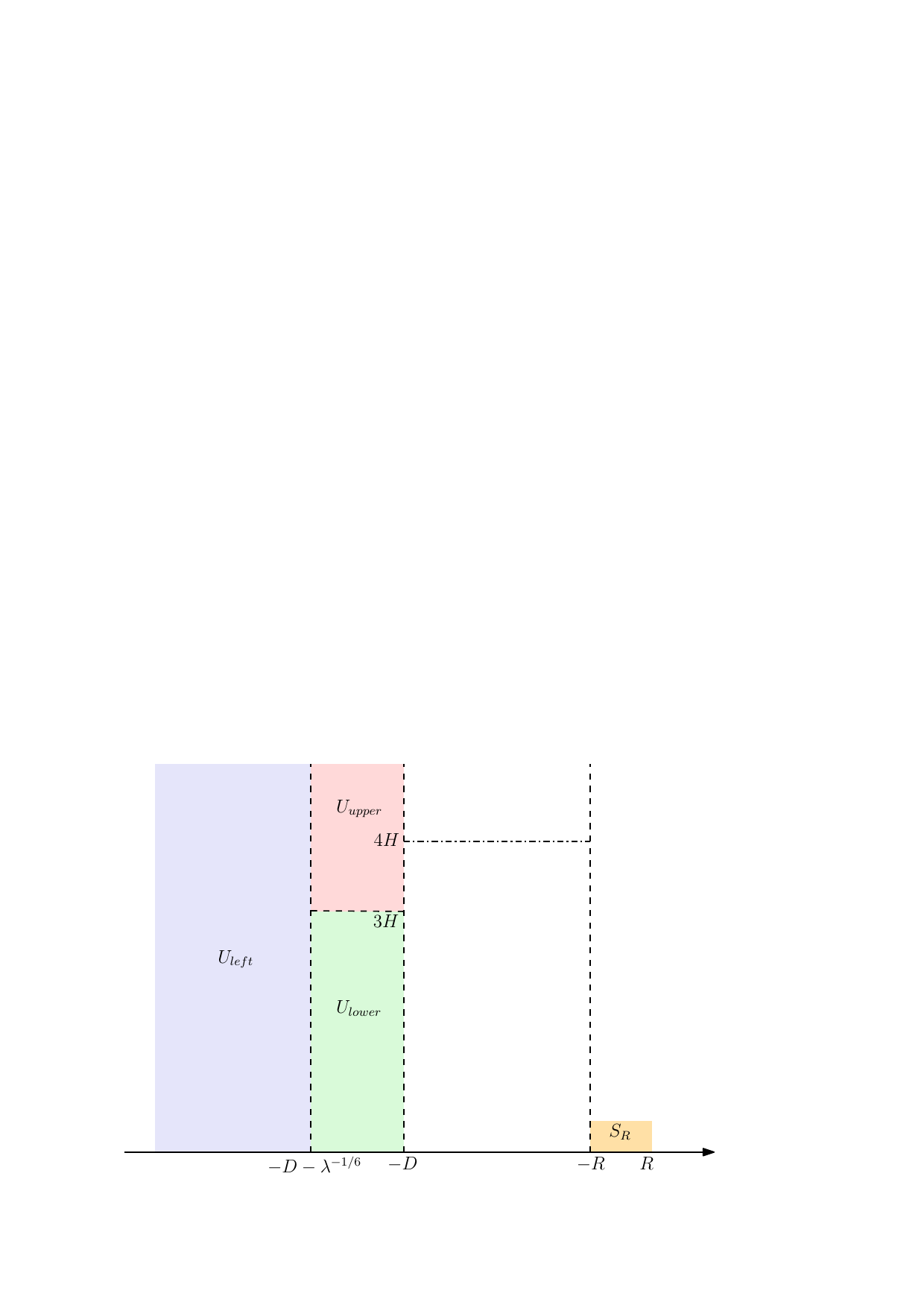}
	\caption{Decomposing $\mathbb{R}^2_+\setminus Q$ (Figure not drawn to scale.)}
	\label{fig:far}
\end{figure}
We decompose the contribution from $\tilde\rho$ by \begin{equation*}
	\begin{split}
		|\tld{v}_{2}(t,x)| \le \left[\int_{U_{lower}}  +\int_{U_{upper}}  +\int_{U_{left}} \right]  \frac{1}{|x-y|} |\tld{\rho}(t,y)| dy.
	\end{split}
\end{equation*}
For the  integral on $U_{lower}$, note that $|x-y|\geq |x_2-y_2|\geq H$ from 
$y_2\leq 3H$ and $x_2\geq 4H$: 
\begin{equation*}
	\begin{split}
		\int_{U_{lower}}\frac{1}{|x-y|} |\tld{\rho}(t,y)| dy\leq \frac{1}{H} \|\tilde\rho\|_{L^1}\leq
		\frac{60}{H}\lambda^{1/2}\leq\frac{1}{10}\lambda^{1/2}.
	\end{split}
\end{equation*}For the  integral on $U_{upper}$, we have the following

\medskip \noindent \textbf{Claim}. For any $y\in U_{upper}$,   we have  $z:=\Phi^{-1}_{t}(y) \notin S_R$. 

\medskip \noindent 
For a contradiction, suppose 
$z\in S_R$. 
Then, when the trajectory enters the strip $U_{upper}\cup U_{lower}=\{-D- {\lambda^{-1/6}}\leq y_1\leq -D\}$ at some $t'> 0$, we have
$\Phi_2(t',z)\leq H$ thanks to   Lemma \ref{lem:S_R_to_H}. Then using the first estimate \eqref{eq:vel-Q-decomp}, the $x_2$-height is smaller than $2H$ as long as the particle lives in the strip $U_{upper}\cup U_{lower}$ since the width of the strip is $\lambda^{-1/6}$ so that the staying time for that particle is at most $2\lambda^{-1/6}$ while the vertical speed is bounded above by $\lambda^{1/6}$. It is a contradiction to $y_2>3H$. 

\medskip 
The above \textbf{Claim} implies
$$ \||\tilde{\rho}|\cdot\mathbf{1}_{U_{upper}}\|_{L^1}\leq \int_{\mathbb{R}^2_+\setminus S_R}|\rho_0(z)|dz\leq \int_{\mathbb{R}^2_+\setminus B_1^+}|\omega_0(z)|dz\leq \delta.$$ Therefore, we finish the estimate for the integral on $U_{upper}$ by
\begin{equation*}
	\begin{split}
		\int_{U_{upper}}\frac{1}{|x-y|} |\tld{\rho}(t,y)| dy\leq 
		6\| |\tilde{\rho}|\cdot\mathbf{1}_{U_{upper}} \|^{1/2}_{L^1}\| |\tilde{\rho}|\cdot\mathbf{1}_{U_{upper}} \|^{1/2}_{L^\infty}
		\leq 
		6\sqrt{A\delta}
		\leq\frac{\lambda^{1/2}}{10}.
	\end{split}
\end{equation*}

For the  integral on $U_{left}$,
we note $|x-y|\geq |x_1-y_1|\geq \lambda^{-1/6}$ 
we compute
\begin{equation*}
	\begin{split}
		\int_{U_{lower}}\frac{1}{|x-y|} |\tld{\rho}(t,y)| dy\leq \lambda^{1/6} \|\tilde\rho\|_{L^1}\leq
		60 \lambda^{(1/2)+(1/6)}\leq\frac{\lambda^{1/2}}{10}.
	\end{split}
\end{equation*} This finishes the proof of \eqref{eq:v_2_on_S}. \end{proof}



\subsubsection{Height control for far particles} We now present the key lemma regarding far field particles.

\begin{lemma}\label{lem:far_cont}
Take any $z\in Q $ and $T_{0} \in[0,T]$. Then,  
\begin{equation*}
	\Phi(T_{0},z)\in Q\quad \mbox{implies} \quad \Phi_2(T_{0},z)\leq 2|z|+6H.
\end{equation*} 
That is,  as long as the particle stays in $Q$, its $x_{2}$-coordinate is bounded by $2|z|+6H.$
\end{lemma}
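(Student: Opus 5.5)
We split into two cases according to whether the particle is local or far at time $T_0$.

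\emph{Local case.} If $\Phi(s,z)\in S_R$ for some $s\in[0,T_0]$, the particle is local. Since it is in $Q$ at time $T_0$, and particles cannot re-enter $Q$ once they leave ($v_1<-1/2$ on $\{x_1=-D\}$), Lemma \ref{lem:S_R_to_H} gives $\Phi_2(t,z)\le H$ on $[s,T_0]$.

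Before time $s$ the particle was outside $S_R$ and inside $Q$. The bound on $[0,s]$ is then the same as in the far case below, applied on $[0,s]$. So it suffices to treat a trajectory that stays in $Q\setminus S_R$ on an interval $[0,T_0]$.

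\emph{Far case: trajectory in $Q\setminus S_R$.} By \eqref{eq:vel_out_S_R} we have $v_1\le -3/4$ and $|v_2|\le 1/4$ there. Hence $\Phi_1$ strictly decreases at rate at least $3/4$, while $\Phi_2$ changes at rate at most $1/4$.

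\textbf{Step 1: heights up to $4H$.} Suppose the height never exceeds $4H$; then the bound holds trivially. Otherwise, let $t_*$ be the last time in $[0,T_0]$ with $\Phi_2(t_*,z)\le\max(z_2,4H)$.

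\textbf{Step 2: the interval $[t_*,T_0]$.} On this interval the height stays at least $4H$. We locate $\Phi_1$ in three ranges:
\begin{itemize}
\item If $\Phi_1\in[-D,-R]$, Lemma \ref{lem:v_2_on_S} gives $v_2\le\lambda^{1/2}$.
\item If $\Phi_1>R$, we use $v_2\le 1/4$. Time spent in $x_1>R$ is at most $\frac43(z_1-R)_+\le \frac43|z|$, giving a height gain of at most $|z|/3$.
\item If $\Phi_1\in[-R,R]$ with height $\ge 4H>R$, we again use $v_2\le1/4$. The time spent there is at most $\frac43\cdot 2R$, giving a gain of at most $R\le H$.
\end{itemize}

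\textbf{Step 3: the region $[-D,-R]$.} The total time spent in $\{-D\le x_1\le -R\}$ is at most $\frac43 D=\frac43 C_0\lambda^{-1/2}$. The gain there is therefore at most $\frac43 C_0\le H$, choosing $C_2$ large.

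\textbf{Step 4: summing.} Adding the contributions, $\Phi_2(T_0,z)\le \max(z_2,4H)+|z|/3+2H\le 2|z|+6H$.

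\textbf{Main obstacle.} The delicate point is that Lemma \ref{lem:v_2_on_S} controls only $v_2$ from above and only for $x_2\ge 4H$. This is why the argument must restart the bookkeeping at the last time the height is at most $4H$. One also has to check that a trajectory which dips below $4H$ and climbs again is handled by this last-crossing time, which it is.

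The crude $1/4$ bound in $x_1>-R$ costs at most a multiple of $|z|+R$. That cost is acceptable because $x_1$ decreases monotonically, so each strip is traversed only once.
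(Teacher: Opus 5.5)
Your proof is correct and is essentially the paper's argument. Lemma~\ref{lem:S_R_to_H} disposes of particles that visited $S_R$. The crude bound \eqref{eq:vel_out_S_R}, together with the monotone decrease of $\Phi_1$, controls the height gained while $x_1\ge -R$. Lemma~\ref{lem:v_2_on_S}, applied over the $O(D)$ crossing time of $\{-D\le x_1\le -R\}$, gives only an $O(C_0)\le H$ gain. The only difference is bookkeeping: the paper splits at the first time $\Phi_1=-R$ and absorbs the $x_2\ge 4H$ restriction of Lemma~\ref{lem:v_2_on_S} into its ``$+4H$'' term, whereas your last time at height $\max(z_2,4H)$ makes that restriction explicit.
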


\begin{proof} For $z\in Q$, suppose  $\Phi(T_0,z)\in Q$ for some $T_0\in (0,T]$ (the conclusion of the lemma is trivial for $T_0=0$). Then we know
$$\Phi(t,z)\in Q\quad \mbox{for all}\quad t\in[0,T_0]$$ because $v_1\leq -1/2$ on the vertical line $\{x_1=-D\}$. We consider two cases: 

\medskip \noindent \textbf{Case I:} there exists $T_1\in [0,T_0]$ satisfying $
\Phi_1(t,z)\geq -R$ {for all}  $t\in[0,T_1].$ We then claim that 
\begin{equation}\label{eq:far_control_1}
	\Phi_2(t,z)\leq 2|z|+R +H \quad\mbox{for all}\quad t\in[0,T_1].
\end{equation} Indeed, consider
any $t\in[0,T_1]$. 
If there exists $s\in[0,t]$ satisfying 
$\Phi(s,z)\in S_R$, then Lemma \ref{lem:S_R_to_H} implies 
$\Phi_2(t,z)\leq H$ which gives \eqref{eq:far_control_1}. 
So we may assume  $\Phi(s,z)\notin S_R$ for every $s\in[0,t]$.
Then, from the velocity estimate \eqref{eq:vel_out_S_R} outside $S_R$, we know for $s\in[0,t]$,
$$
\frac{d}{ds}\Phi_2(s,z)\leq \frac{1}{2}\quad\mbox{and}\quad
\frac{d}{ds}\Phi_1(s,z) \leq -\frac{1}{2}.
$$
Thus $-R\leq \Phi_1(t,z)\leq z_1-0.5 t$ and
$\Phi_2(t,z)\leq z_2+0.5t$, which imply
\eqref{eq:far_control_1}. Note that we can take $T_1$ to be the maximal (within $[0,T_0]$) with the stated property. If the maximal $T_1$ is not equal to $T_0$, we move on to \textbf{Case II} below.

\medskip \noindent \textbf{Case II}: Suppose
$\Phi_1(T_2,z)\leq -R$  for some $T_2\in[0,T_0]$.
Note that for any $t\in[T_2,T]$, we know $\Phi_1(t,z)\leq -R$ 
due to \eqref{eq:vel_out_S_R}.
Thus
$$-D\leq \Phi_1(t,z)\leq -R\quad\mbox{for all}\quad t\in[T_2,T_0].$$ 
Recall $T_0-T_2\leq 2D=2C_0 \lambda^{-1/2}.$ Thus, by Lemma \ref{lem:v_2_on_S}, we get
$$
\Phi_2(t,z)\leq \Phi_2(T_2,z)+4H +2C_0
\leq 2|z|+R+5H +2C_0\leq 2|z|+6H
\quad\mbox{for all}\quad t\in[T_2,T_0],
$$ by taking $H$ large if necessary. This finishes the proof. \end{proof} 

\section{Closing bootstrap hypotheses}\label{sec:closing}

In this section, we close all the bootstrap hypotheses. First, on the time interval $[0,T]$ where \eqref{eq:B1}--\eqref{eq:B5} hold, we prove that the inequality in \eqref{eq:B2} holds with $\lambda/20$ instead of $\lambda/10$.
Then we prove that the inequalities \eqref{eq:B3}--\eqref{eq:B5} hold with $\lambda/2$ instead of $\lambda$.  Then, we show that \eqref{eq:B1} holds for a slightly larger time interval $[0,T^*]$, $T^*>T$. Then, by continuity of the involved quantities in time, we immediately conclude \eqref{eq:B2}--\eqref{eq:B5} in a time interval slightly larger than $[0,T]$. Then, we deduce that \eqref{eq:B1}--\eqref{eq:B5} hold for $[0,\infty)$, which in particular implies the conclusion of the main theorem.

This section is organized as follows. To begin with,
in \S \ref{subsec:2}, we close \eqref{eq:B2}.
Next, in \S \ref{subsec:34}, we close \eqref{eq:B3} and \eqref{eq:B4}. Next, in \S \ref{subsec:5}, we close \eqref{eq:B5}. Lastly, \eqref{eq:B1} is closed in  \S \ref{subsec:1}. We formally complete the proof of the main theorem in \S \ref{subsec:proof-main}. 

\subsection{Closing (B2): Far field control}\label{subsec:2}

From Lemma \ref{lem:far_cont}, we compute
\begin{equation}\label{closing-B2}
\begin{split}
	&	\int_{Q}x_2|\rho(t,x)|\mathbf{1}_{\Phi(t,A^{far,t})}(x) dx=\int_{A^{far,t}\cap\left( \Phi^{-1}_{t}(Q)\right)} \Phi_2(t,z)|\rho_0(z)|\,dz\\
	&\quad\quad\quad  \leq \int_{\mathbb{R}^2_+\setminus S_R} (2|z|+6H)|\rho_0(z)|\,dx\leq (2+6H)		\int_{|z|>10}|z||\rho_0(z)|\,dz		\leq 2\times  6H\delta\leq \frac{\lambda}{20},
\end{split}
\end{equation} where we take $\delta$ small enough.
Here we used, from $|p_1(0)|\leq 1$,
\begin{equation}\label{mom_rho}
\begin{split}
	\int_{|z|>10}|z||\rho_0(z)|\,dz	
	\leq \int_{|z|>5}(|x|+1)|\omega_0(z)|\,dx	\leq 2\delta.
\end{split}
\end{equation}	
\subsection{Closing (B3) and (B4): Estimating enstrophy and impulse }\label{subsec:34}

\subsubsection{Enstrophy estimate} 
We compute \begin{equation*}
\begin{split}
	\frac{d}{dt} (\kpp_{Q}(t))^2 = - \int_{Q} v\cdot\nb (\rho^2) dx = \int_{ \{ x_{1} = -D \} }  v_{1} \rho^2 dx_{2}. 
\end{split}
\end{equation*} From the global velocity estimate \eqref{eq:global-v}, $v_{1} \le -1/2$ on $\{ x_{1} = -D \}$. Hence \begin{equation*}
\begin{split}
	\frac{d}{dt} (\kpp_{Q}(t))^2 \le 0, \quad \mbox{which gives}\quad \kpp_{Q}(t) \le \kpp_{Q}(0). 
\end{split}
\end{equation*} In particular, for all $0\le t \le T$,  we have  \begin{equation}\label{closing-B3}
\begin{split}
	\kpp_{Q}(T) \le \kpp_{Q}(t) < \kpp[\omg_{Lamb}] + \dlt <  \kpp[\omg_{Lamb}] + \frac{1}{2}\lambda. 
\end{split}
\end{equation}  
\subsubsection{Impulse estimate}
We compute \begin{equation*}
\begin{split}
	\frac{d}{dt} \mu_Q(t) & = - \int_{Q} x_{2} v\cdot\nb\rho dx  = \int_{Q} \tld{v}_{2} \rho dx + \int_{ \{ x_{1} = - D \} } ( v_{Q,1} + \tld{v}_{1} - \dot{p}_{1} ) x_{2} ( \rho^{(+)} - \rho^{(-)} ) dx_{2} \\
	& =: F_{\mu}(t) - I_{\mu}^{(+)}(t) + I_{\mu}^{(-)}(t). 
\end{split}
\end{equation*} Here, $I_{\mu}^{(\pm)}(t)$ denotes the ``flux'' of impulse through the borderline: 
\begin{equation}\label{defn_I_mu}
\begin{split}
	I_{\mu}^{(\pm)}(t) := \int_{ \{ x_{1} = - D \} } (  \dot{p}_{1}  -( v_{Q,1} + \tld{v}_{1}) ) x_{2} \rho^{(\pm)} dx_{2}. 
\end{split}
\end{equation}
As a consequence of the velocity estimate \eqref{eq:borderline-v} in Lemma \ref{lem:borderline-v}, we have $I_{\mu}^{(\pm)}(t) \ge 0$. 
We now integrate the above in time to get \begin{equation}\label{eq:mu}
\begin{split}
	\mu_{Q}(T) - \mu_{Q}(0) + \int_{0}^{T} I_{\mu}^{(+)}(t) dt  = \int_{0}^{T} F_{\mu}(t) dt  + \int_{0}^{T} I_{\mu}^{(-)}(t) dt . 
\end{split}
\end{equation}
\begin{lemma} \label{lem:I-mu-minus} We have 
\begin{equation}\label{eq:I-mu-minus}
	\begin{split}
		\int_{0}^{T} I_{\mu}^{(-)}(t) dt 
		\leq  \frac{\lambda}{10}.			
	\end{split}
\end{equation} 
\end{lemma}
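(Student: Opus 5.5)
We will bound $\int_0^T I_\mu^{(-)}(t)\,dt$ by reading the flux integral in Lagrangian coordinates. The quantity $I_\mu^{(-)}(t)$ is the rate at which the impulse carried by negative vorticity leaves $Q$ through $\{x_1=-D\}$. By the remark after Lemma~\ref{lem:borderline-v}, $v_1<-1/2$ on that line, so particles only exit $Q$ and never re-enter. Since $\rho^{(-)}$ is transported by an area-preserving flow, the time integral of the flux equals the total exit-height-weighted negative vorticity:
\begin{equation*}
\int_0^T I_\mu^{(-)}(t)\,dt=\int_{\{z\in Q:\ \Phi(T,z)\notin Q\}} \Phi_2(t_{exit}(z),z)\,\rho_0^{(-)}(z)\,dz .
\end{equation*}
Here $t_{exit}(z)\in[0,T]$ is the unique time at which $\Phi_1(t,z)=-D$. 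Only particles initially in $Q$ contribute, because particles outside $Q$ stay outside. This is the usual coarea/Reynolds transport identity for the flux of the conserved quantity $x_2\rho^{(-)}$, using $\dot p_1-(v_{Q,1}+\tilde v_1)=-v_1$ on the line. It can be justified for smooth data via the reduction in \S\ref{subsec:redu}.

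We then split the exiting particles according to whether $z\in A^{loc,t_{exit}}$ or $z\in A^{far,t_{exit}}$.

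For local particles, Lemma~\ref{lem:S_R_to_H} applies up to the exit time, using continuity at the boundary. It gives $\Phi_2(t_{exit},z)\le H$. Their contribution is therefore at most $H\|\rho_0^{(-)}\|_{L^1}\le 3H\delta$ by Lemma~\ref{lem:minus-small}.

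For far particles, we have $z\notin S_R$, and Lemma~\ref{lem:far_cont}, applied at times just before exit and passed to the limit, gives $\Phi_2(t_{exit},z)\le 2|z|+6H$. Since $|z|\ge R\ge 10$ on $Q\setminus S_R$, this is bounded by $(2+6H)|z|$. Their contribution is at most
\begin{equation*}
(2+6H)\int_{|z|>10}|z|\,|\rho_0(z)|\,dz\le 2(2+6H)\delta,
\end{equation*}
by \eqref{mom_rho}, exactly as in \eqref{closing-B2}.

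Summing the two pieces gives $\int_0^T I_\mu^{(-)}\,dt\le C H\delta\le\lambda/10$, by the choice \eqref{small_dlt} with $C_4$ large.

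The main obstacle is making the Lagrangian flux identity rigorous, specifically the measure-theoretic change of variables from the boundary flux to the initial data. For smooth solutions we would handle it by writing
\begin{equation*}
\frac{d}{dt}\int_Q x_2\rho^{(-)}\,dx=\int_Q v_2\rho^{(-)}\,dx-I_\mu^{(-)}(t),
\end{equation*}
which is valid because $\rho^{(-)}$ is transported. An alternative is to compute $\int x_2\rho^{(-)}\mathbf 1_{\Phi(t,\cdot)\in Q}$ along trajectories and differentiate the indicator in time.

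A cheaper fallback avoids tracking exit times altogether. It uses only the bound at the boundary point itself: every particle on $\{x_1=-D\}$ at time $t$ is either local (height $\le H$) or far (height $\le 2|z|+6H$). Integrating the flux against the transported measure then yields the same weighted sum over initial positions.
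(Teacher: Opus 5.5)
Your proposal is correct and follows essentially the paper's argument: you convert the time-integrated boundary flux into a Lagrangian integral over initial positions, weighted by the height at which each particle crosses $\{x_1=-D\}$, bound that height via Lemma~\ref{lem:far_cont} (and Lemma~\ref{lem:S_R_to_H}), and conclude from the moment bound \eqref{mom_rho} together with $\|\rho_0^{(-)}\|_{L^1}\le 3\delta$. The differences are technical: the paper never states your exact exit-time identity (which holds because $(t,x_2)\mapsto\Phi_t^{-1}(-D,x_2)$ has Jacobian $|v_1|$ by area preservation), instead bounding $I_\mu^{(-)}$ by twice the boundary integral, averaging over thin strips $[-D-1/n,-D]$ and using that each particle spends at most $2/n$ time there; it also applies Lemma~\ref{lem:far_cont} to every $z\in Q$ rather than splitting into local and far particles.
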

\begin{proof} 
From the definition of $I_{\mu}^{(-)}$ and the velocity estimate \eqref{eq:borderline-v}, we get \begin{equation*}
	\begin{split}
		I_{\mu}^{(-)}(t) &\le 2 \int_0^\infty
		x_{2} 
		\rho^{(-)}(t,-D,x_{2}) dx_{2}
		=2\lim_n\left( n \int_{-D-(1/n)}^{-D} \int_{0}^\infty
		x_{2} 
		\rho^{(-)}(t,x_1,x_{2}) dx_{2}dx_{1}\right).\\
	\end{split}
\end{equation*}
Let $n\geq 1.$ We take time integral to get
\begin{equation}\label{eq:n_time}
	\begin{split}
		&\int_0^T\left( n \int_{-D-(1/n)}^{-D} \int_{0}^\infty
		x_{2} 
		\rho^{(-)}(t,x_1,x_{2}) dx_{2}dx_{1}\right)dt\\
		&=
		n	\int_0^T\left( \int_{\bbR^2_+} \Phi_2(t,z)\rho_0^{(-)}(z)\cdot \mathbf{1}_{\{y\,:\,\Phi_1(t,y)\in[-D-(1/n),-D]\}}(z)
		dz\right)dt.\\
	\end{split}
\end{equation} 
Due to the horizontal  velocity estimate \eqref{eq:vel-Q-decomp}, 
we observe for any $z\in\bbR^2_+$,
$$
|\{t\in[0,T]\,:\,\Phi_1(t,z)\in[-D-(1/n), -D]\}|\leq \frac{2}{n}.
$$
We claim that for any $z\in\bbR^2_+$, 
\begin{equation}\label{eq:claim:im}
	\Phi_1(t,z)\in[-D-1,-D]\quad\Rightarrow \quad
	\Phi_2(t,z)\leq 2|z|+6H+1.
\end{equation} 

Indeed, we first consider the case when
$z\in Q$. Consider $s\in[0,t]$ satisfying $\Phi_1(s,z)=-D$. Then, Lemma \ref{lem:far_cont} implies
$
\Phi_2(s,z)\leq 2|z|+6H.
$
On the other hand, due to the horizontal velocity estimate \eqref{eq:borderline-v}, we know $|t-s|\leq 2$ so that the vertical velocity estimate \eqref{eq:vel-Q-decomp} implies $\Phi_2(t,z)\leq \Phi_2(s,z)+2\lambda^{1/6}\leq 2|z|+6H+1$.
Now consider any $z\notin Q$. Then the vertical velocity estimate \eqref{eq:vel-Q-decomp} gives
$\Phi_2(t,z)\leq z_2+t\lambda^{1/6}
\leq z_2+2\lambda^{1/6}
\leq 2|z|+6H+1$. It finishes the proof of the above claim \eqref{eq:claim:im}. 

Then, by using Fubini, we continue the computation in \eqref{eq:n_time}:
\begin{equation*}
	\begin{split}
		\dots
		&\leq   n \int_{\bbR^2_+} (2|z|+6H+1)\rho_0^{(-)}(z)\cdot\left(\frac{2}{n}\right)
		dz
		\\
		&=2\int_{\bbR^2_+} (2|z|+6H+1)\rho_0^{(-)}(z)
		dz\leq 4\int_{\bbR^2_+} |z|\rho_0^{(-)}(z)
		dz+100H\int_{Q}\rho_0^{(-)}dz\\
		&\leq 100\delta+100H\|\omg_0^{(-)}\|_{L^1}
		\leq 500H\dlt
		\leq  \frac{\lambda}{10},
	\end{split}
\end{equation*} where we assumed $\dlt$ small enough. This estimate holds uniformly in $n\geq1$ so that  it finishes the proof. \end{proof}

To this end, we define the set of ``gained particles'' (from the perspective of $\bbR^2_+\backslash Q = \{ x_{1} < -D \}$) by the time $T$: \begin{equation*}
\begin{split}
	A^{gain, T} := \left\{ z\in Q 
	\, : \, \mbox{there exists } t \in [0,T] \, \mbox{ such that } \, \Phi_{1}(t,z) < -D \right\}. 
\end{split}
\end{equation*} 
Thanks to the velocity estimate \eqref{eq:borderline-v}, for any $z \in A^{gain, T}$, we can \textit{uniquely} define $\tau(z) \in [0,T]$ to be the time such that $\Phi_{1}(\tau(z),z) = -D$.
We then set \begin{equation*}
\begin{split}
	\rho_{0}^{gain, T} := \rho_{0} \mathbf{1}_{A^{gain, T}}.
\end{split}
\end{equation*}
Then we know from \eqref{eq:rem-small} that \begin{equation}\label{eq:gain-small}
\begin{split}
	&	\nrm{ \rho_{0}^{gain, T} }_{L^{2}} \le \sup_{ t \in [0,T] } \nrm{ \tld{\rho}(t,\cdot) }_{L^2} \leq 
	\nrm{ \tld{\rho}(T,\cdot) }_{L^2}		
	\le 30\lambda^{1/2}, \qquad \nrm{ \rho_{0}^{gain, T} }_{L^{1}}
	\leq 
	\nrm{ \tld{\rho}(T,\cdot) }_{L^1}\leq 60\lambda^{1/2}.
\end{split}
\end{equation}  

We now estimate time integral of the non-local interaction term $F_{\mu}$:
\begin{lemma}\label{lem:F-mu}
We have  \begin{equation}\label{eq:I-mu-plus}
	\begin{split}
		\int_0^T F_{\mu}(t) dt < \frac{\lambda}{10}\quad\mbox{and} \quad	\int_0^T I_\mu^{(+)}(t) dt < 13\lambda.
	\end{split}
\end{equation}
\end{lemma}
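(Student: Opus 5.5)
The second bound follows from the first together with the impulse balance \eqref{eq:mu}; the real work is the first bound on $F_\mu$.

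\textbf{The bound on $I_\mu^{(+)}$.} Rearranging \eqref{eq:mu} gives
\[
\int_0^T I_\mu^{(+)}dt=\mu_Q(0)-\mu_Q(T)+\int_0^TF_\mu\,dt+\int_0^TI_\mu^{(-)}dt .
\]
For the individual terms:
\begin{itemize}
\item by (B4), $\mu_Q(0)\le|\mu|_Q(0)\le\mu[\omg_{Lamb}]+\lambda$;
\item by Lemma~\ref{lem:impulse-L1star-Q}, $\mu_Q(T)\ge\mu[\omg_{Lamb}]-12\lambda$;
\item Lemma~\ref{lem:I-mu-minus} gives $\int_0^T I_\mu^{(-)}\,dt\le\lambda/10$.
\end{itemize}
Once $\int_0^T F_\mu\,dt<\lambda/10$ is known, the total is below $13\lambda$.

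\textbf{Reduction of $F_\mu$ to a Lagrangian integral.} Write
\[
F_\mu(t)=\int_Q\tld v_2\,\rho\,dx=\int \rho_Q\,K_2[\tld\rho]\,dx .
\]
By the antisymmetry remark for $K_2$, this equals
\[
-\int\tld\rho\,K_2[\rho_Q]\,dx=-\int_{\{x_1<-D\}}\tld\rho\,v_{Q,2}\,dx .
\]
Thus the effect of $\tld\rho$ lifting up $\rho_Q$ becomes the vertical velocity induced by $\rho_Q$ on the particles left of the borderline, following \cite{AJY}.

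Next split $\rho_Q=\rho_Q^{main,+}-\rho_Q^{main,-}+\rho_Q^{err}$. For $x_1<-D$ and $y$ in the support of $\rho^{main,+}_Q$ we have $x_1-y_1<0$, so $K_2\le 0$ there. Hence $v^{main,+}_{Q,2}\le 0$, and the sign of $-\tld\rho\, v^{main,+}_{Q,2}$ is that of $\tld\rho$.

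To handle this, pass to Lagrangian coordinates. We have $\tld\rho(t)=\rho_0\circ\Phi_t^{-1}$ restricted to $\{x_1<-D\}$. Particles there come either from $A^{gain,T}$ (entering after $\tau(z)$) or from $\{z_1\le -D\}$ initially. This gives
\[
\int_0^T\!\!\int_{x_1<-D}|\tld\rho|\,|v_{Q,2}^{main,+}|\,dx\,dt\le\int|\rho_0(z)|\int_{\tau(z)}^{T}\bigl|v_{Q,2}^{main,+}(t,\Phi(t,z))\bigr|\,dt\,dz .
\]

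\textbf{Estimating the time integral along trajectories.} Outside $S_R$ we have $\dot\Phi_1\le-1/2$. So after exiting $Q$ a particle satisfies $\Phi_1(t,z)\le -D-(t-\tau)/2$, giving distance at least $D+(t-\tau)/2$ to $\mathrm{supp}\,\rho_Q^{main}\subset\{y_1>-R\}$. Using $|\bfK|\le Cy_2/(|x-y||x-\bar y|)$, a bound on $\|\rho_Q^{main}\|_{L^1_*}$, and the $x_2$ factor from $\bfK_2$, one gets
\[
|v^{main}_{Q,2}(t,\Phi)|\lesssim \frac{\Phi_2(t,z)}{(D+(t-\tau)/2)^{3}}.
\]
\begin{itemize}
\item For gain particles, the height at exit is controlled by Lemma~\ref{lem:far_cont} and Lemma~\ref{lem:S_R_to_H} by $2|z|+6H$, and afterwards grows at most like $\lambda^{1/6}(t-\tau)$ by \eqref{eq:vel-Q-decomp}.
\item Integrating in $t$ gives $\lesssim(|z|+H)D^{-2}+\lambda^{1/6}D^{-1}$.
\item Multiplying by $|\rho_0|$ and integrating, using $\|\rho_0^{gain}\|_{L^1}\le60\lambda^{1/2}$ and $\int|z||\rho_0|\lesssim1$, yields $\lesssim(1+H\lambda^{1/2})\lambda+\lambda^{2/3}\cdot\lambda^{1/2}\cdot\lambda^{1/2}\ll\lambda$.
\item Here $D^{-2}=\lambda/C_0^2$ with $C_0=10R$ large supplies the factor $1/10$, and the moment assumption \eqref{assump_moment} controls $\int|z||\rho_0|$ away from the dipole.
\item Particles initially left of $-D$ are treated the same way with $\tau=0$, using smallness of their mass and moment, which is $\le\delta$.
\end{itemize}

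The contributions of $\rho_Q^{main,-}$ (small, $O(\delta)$ in $L^1\cap L^2$) and of $\rho_Q^{err}$ are handled the same way. The one difference is that $\rho^{err}_Q$ sits near the borderline. There I would instead use the pointwise bound from Lemma~\ref{lem:error-small} with $\|\rho_Q^{err}\|_{L^1_*}\le30\lambda$, together with $\|\tld\rho\|_{L^1}\le 60\lambda^{1/2}$ and the short residence time near the line.

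\textbf{Main obstacle.} I expect the hardest step to be this interaction between $\rho_Q^{err}$ and $\tld\rho$ near $\{x_1=-D\}$, where the distance is not large and the time integration must be paired carefully with Lemma~\ref{lem:v_2_on_S}-type decompositions.
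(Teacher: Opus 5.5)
Your argument is sound, but for the main part you take a heavier route than the paper.

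\textbf{Main part.} After the same antisymmetry step and the same Lagrangian change of variables over $\rho_0^{gain,T}$, the paper uses only the quadratic decay $|{\bf K}_2(x,y)|\le \frac{2}{\pi}\,y_2|x-y|^{-2}$. This follows from $|x_1-y_1|\le|x-y|$ and $x_2\le|x-\bar y|$. It is combined with $|\Phi_1(t,z)-y_1|\ge (D+t-\tau(z))/2$ on the support of $\rho_Q^{main}$. The time integral along each trajectory is then $\lesssim\|\rho_Q^{main}\|_{L^1_*}D^{-1}$. Multiplying by $\|\rho_0^{gain,T}\|_{L^1}\le 60\lambda^{1/2}$ gives $\lesssim \lambda/C_0$. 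No height information along trajectories is needed; the bound closes because $D=C_0\lambda^{-1/2}$ with $C_0$ large.

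Your cubic-decay version is also correct and gives a smaller bound. It keeps the factor $x_2$, bounds exit heights by $2|z|+6H$ via Lemma~\ref{lem:far_cont} and continuity, and then adds the drift $\lambda^{1/6}(t-\tau)$. The cost is that it imports the height-control machinery of \S\ref{sec:conseq} and a bound on $\int|z||\rho_0|$, with no benefit for this lemma.

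\textbf{Error part.} What you flag as the main obstacle is actually short and needs nothing like Lemma~\ref{lem:v_2_on_S}. For $t>\tau(z)$, $|v^{err}_{Q,2}(t,\Phi(t,z))|\le C\|\rho_Q^{err}\|_{L^1_*}|\Phi_1(t,z)+D|^{-2}\le C\lambda (t-\tau(z))^{-2}$. Combine this with the uniform bound $CA^{1/4}\lambda^{3/8}$ from Lemma~\ref{lem:error-small}. Integrating the minimum of the two in $t$ gives $CA^{1/4}\lambda^{11/16}$ per particle. The gain mass $60\lambda^{1/2}$ then finishes the estimate. Your listed ingredients are exactly these, so your plan closes.

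You also include particles starting in $\{z_1\le -D\}$, with $\tau=0$; the paper's change of variables passes over this detail. It is harmless, since these particles never re-enter $Q$ and are counted in $\|\widetilde{\rho}(T)\|_{L^1}$.

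\textbf{Arithmetic slip in the $I_\mu^{(+)}$ bound.} With $\mu_Q(0)\le\mu[\omega_{Lamb}]+\lambda$ from \eqref{eq:B4}, your four bounds sum to $\lambda+12\lambda+\lambda/10+\lambda/10=13.2\lambda$. That does not give $<13\lambda$. Use the initial data instead: $\mu_Q(0)\le\|\omega_0\|_{L^1_*}<\mu[\omega_{Lamb}]+\delta$. The sum is then at most $12.2\lambda+\delta<13\lambda$. This is how the paper obtains the second bound, in \eqref{eq:B4-close}.
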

\begin{proof}

We now rewrite and decompose \begin{equation*}
	\begin{split}
		F_{\mu}(t) = \int_{Q} \tld{v}_{2}\rho dx = \int \tld{\rho} ( - v^{main}_{Q} - v^{err}_{Q}  )_{2} dx =: F^{main}_{\mu}(t) + F^{err}_{\mu}(t). 
	\end{split}
\end{equation*} 
In what follows, we are going to prove \begin{equation*}
	\begin{split}
		\int_0^T F^{main}_{\mu}(t) dt < \frac{\lambda}{20} \qquad \mbox{and}\qquad \int_0^T F^{err}_{\mu}(t) dt < \frac{\lambda}{20},
	\end{split}
\end{equation*} which gives the lemma.

\medskip

\noindent \textbf{Estimate of $F_\mu^{err}$}. We rewrite the integral by changing variables from $x$ to $z$ related by $x = \Phi(t,z)$ and then using Fubini's theorem to exchange $z$ and $t$ integrals:
\begin{equation*}
	\begin{split}
		\int_0^{T} F_{\mu}^{err}(t) dt & = \int_0^{T} \int_{ \bbR^2_+\backslash Q } \tld{\rho}(t,x) (- 	v^{err}_{Q,2})(t,x)  dx \,dt \\&= \int_0^{T} \int_{Q} {\rho}^{gain, T}_{0}(z) \mathbf{1}_{\{\tau(z)<t\}}(z)	(-v^{err}_{Q,2})(t,\Phi(t,z))  dz\,dt\\
		& = \int_{Q} \rho_{0}^{gain, T}(z) \, \left[ \int_{ \tau(z)}^{T} - v^{err}_{Q,2}(t,\Phi(t,z))  dt \right] dz , 
	\end{split}
\end{equation*} 
where	\begin{equation}\label{eq:v-err-2}
	\begin{split}
		-v^{err}_{Q,2}(t,\Phi(t,z))   =  -\frac1{2\pi} \int_{ \{ -D < y_{1} \le -R \} } \frac{\Phi_1(t,z)-y_1}{|\Phi(t,z)-y|^{2}} \frac{4\Phi_2(t,z)}{|\Phi(t,z)-\bar{y}|^2} y_{2}\rho^{err}_{Q}(t,y) dy. 
	\end{split}
\end{equation}
We are going to estimate $v^{err}_{Q,2}$ in two different ways: \begin{itemize}
	\item First, from Lemma \ref{lem:error-small}, we have uniform in time-space bound \begin{equation*}
		\begin{split}
			\nrm{v^{err}_{Q,2}}_{L^\infty([0,T]\times\bbR^2_+)} \le  CA^{1/4}\lambda^{{3/8}}. 
		\end{split}
	\end{equation*}
	\item Next, if $\Phi_{1}(t,z) < -D$ i.e. when $t > \tau(z)$, we obtain from \eqref{eq:v-err-2} that \begin{equation*}
		\begin{split}
			| v^{err}_{Q,2}(t,\Phi(t,z)) | & \le \int_{ \{ -D < y_{1} \le -R \} } \frac{Cy_{2}|\rho^{err}_{Q}(t,y)| }{|\Phi(t,z)-y||\Phi(t,z)-\bar{y}|} dy \\
			& \le C|\Phi_{1}(t,z) + D|^{-2} \nrm{\rho^{err}_{Q}(t,\cdot)}_{L^1_*} \le C\lambda |\Phi_{1}(t,z) + D|^{-2}. 
		\end{split}
	\end{equation*} Moreover, we infer from \eqref{eq:borderline-v} 
	that $\dot{\Phi}_1(t,z)<-\frac12$ when $t > \tau(z)$. This gives $\Phi_1(t,z)<-\frac{t-\tau(z)}{2}-D$ for any $t > \tau(z)$. 
\end{itemize}
Therefore, from these two bounds 		we conclude that 
\begin{equation*}
	\begin{split}
		\int_0^{T} F_{\mu}^{err}(t) dt &\le C\int \left|\rho_{0}^{gain, T}(z)\right| \left[\int_{\tau(z)}^{T} \min\left\{  A^{1/4}\lambda^{3/8} , \lambda |\Phi_{1}(t,z) + D|^{-2} \right\} dt \right] dz \\
		&\le CA^{1/4} \lambda^{11/16} \int \left|\rho_{0}^{gain, T}(z)\right| dz \le CA^{1/4} \lambda^{11/16} 
		\lambda^{1/2}
		\leq CA^{1/4}  \lambda^{19/16}\leq
		\frac{\lambda}{20},\\			 
	\end{split}
\end{equation*} by taking $\lambda$ smaller if necessary
where we have used \eqref{eq:gain-small}.

\medskip

\noindent \textbf{Estimate of $F_\mu^{main}$}.
Integrating in time and applying Fubini's theorem as in the above, 
\begin{equation*}
	\begin{split}
		\int_0^{T} F_{\mu}^{main}(t) dt  
		& = \int \rho_{0}^{gain, T}(z) \, \left[ \int_{ \tau(z)}^{T} -v^{main}_{Q,2}(t,\Phi(t,z))  dt \right] dz
	\end{split}
\end{equation*} with 	\begin{equation*}
	\begin{split}
		-v^{main}_{Q,2}(t,\Phi(t,z))   =  -\frac1{2\pi} \int_{ \{ -R \le y_{1} \} } \frac{\Phi_1(t,z)-y_1}{|\Phi(t,z)-y|^{2}} \frac{4\Phi_2(t,z)}{|\Phi(t,z)-\bar{y}|^2} y_{2}\rho^{main}_{Q}(t,y) dy.
	\end{split}
\end{equation*} Using that for $t \ge \tau(z)$, we have $|\Phi_{1}(t,x) - y_1| \ge D/2 + (t-\tau(z))/2,$ we compute
\begin{equation*}
	\begin{split}
		\int_0^{T} F_{\mu}^{main}(t) dt  
		& \leq  \int \left|\rho_{0}^{gain, T}(z)\right| \, \left[ \int_{ \tau(z)}^{T} \frac{32}{\pi} \nrm{\rho_Q^{main}}_{L_*^1} \frac{1}{(D+(t-\tau(z)))^2}  dt \right] dx \\
		&\leq 
		{	(60\lambda^{1/2})}
		\times 64 D^{-1}\leq \frac{5000}{C_0}  \lambda \leq \frac{\lambda}{20},
		\\
	\end{split}
\end{equation*}
where we used $$ \nrm{\rho_Q^{main}}_{L_*^1}\leq \|\omega_{Lamb}\|_{L^1_*}+\lambda\leq \pi+\lambda\leq 4.$$ and the choice of $C_0$ in \eqref{defn_C_0}.
This finishes the proof.  
\end{proof}

Returning to \eqref{eq:mu} and applying Lemmas \ref{lem:I-mu-minus} and \ref{lem:F-mu}, we obtain \begin{equation*}
\begin{split}
	\mu_Q(T) + \int_0^T I_\mu^{(+)}(t) dt \leq \mu_Q(0) + \frac15 \lambda \leq \mu[\omg_{Lamb}] +  \frac15\lambda+\delta \leq \mu[\omg_{Lamb}] +  \frac14\lambda . 
\end{split}
\end{equation*}
Since $I_\mu^{(+)}(t)\ge0$, we conclude  by \eqref{eq:impulse-L1star-Q} that \begin{equation}\label{eq:B4-close}
\begin{split}
	\mu_Q(T) <   \mu[\omg_{Lamb}] +  \frac14\lambda,\quad |\mu|_Q(T) <   \mu[\omg_{Lamb}] +  \frac12\lambda, \quad \mbox{and} \quad \int_0^T I_\mu^{(+)}(t) dt <  13\lambda.
\end{split}
\end{equation}  

\subsection{Closing (B5): Estimating energy}\label{subsec:5}

We decompose the energy \begin{equation*}
\begin{split}
	E[\omg_{0}] = E[\omg(T,\cdot)] = E_{Q}(T) + \tld{E}(T) + 2E_{inter}[ \rho_{Q}(T,\cdot), \tld{\rho}(T,\cdot) ]
\end{split}
\end{equation*} where $E_{inter}$ is defined in \eqref{eq:E-int-def}, $E_{Q}(T) = E[\rho_{Q}(T,\cdot)]$, and $\tld{E}(T,\cdot) = E[\tld{\rho}(T,\cdot)]$. To obtain a lower bound for $E_{Q}$, in the following two lemmas, we prove that each of   $\tld{E}(T)$ and $ 2E_{inter}[ \rho_{Q}(T,\cdot), \tld{\rho}(T,\cdot) ]$  are smaller than 
$\lambda/8$	
for all times. Then, using \eqref{eq:ini_en}, we get \begin{equation}\label{closing-B5}
\begin{split}
	E_{Q}(T) \ge E[\omg_{0}] - \frac{\lambda}{4} > E[\omg_{Lamb}] - \frac{\lambda}{2}, 
\end{split}
\end{equation} which closes \eqref{eq:B5}.

\subsubsection{Estimating the interaction energy} \begin{lemma}
For all $t \in [0,T]$, we have $2E_{inter}[ \rho_{Q}, \tld{\rho} ](t) \leq \frac{1}{8} \lambda$. 
\end{lemma}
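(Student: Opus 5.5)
The plan is to write $2E_{inter}[\rho_Q,\tld\rho](t)=\int_{\bbR^2_+\setminus Q}\tld\rho(t,x)\,\psi_Q(t,x)\,dx$ and bound it in absolute value. The obvious estimate \eqref{eq:E-inter-unsymm} cannot be used directly: it would need $\nrm{\tld\rho}_{L^1_*}$, and this quantity is exactly what may grow (vorticity drifting to $x_2\to\infty$ after exiting $Q$). So I will use a bound on $\bfG$ that is uniform in the height $x_2$ and puts the weight only on the $y$-variable, where $\rho_Q$ is controlled.

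\textbf{Step 1 (height-free kernel bound).} Since $|x-\bar y|^2=|x-y|^2+4x_2y_2$ and $|x-\bar y|\le |x-y|+2y_2$, we get
\begin{equation*}
\bfG(x,y)=\frac1{2\pi}\log\frac{|x-\bar y|}{|x-y|}\le \frac1{2\pi}\log\Big(1+\frac{2y_2}{|x-y|}\Big)\le \frac{1}{\pi}\frac{y_2}{|x-y|}.
\end{equation*}
Hence
\begin{equation*}
|2E_{inter}[\rho_Q,\tld\rho]|\le \frac1\pi\iint \frac{|\tld\rho(x)|\,y_2|\rho_Q(y)|}{|x-y|}\,dx\,dy .
\end{equation*}
I then split $\rho_Q=\rho_Q^{main}+\rho_Q^{err}$.

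\textbf{Step 2 (main part).} For $y\in\supp\rho_Q^{main}\subset\{y_1\ge -R\}$ and $x\in\supp\tld\rho\subset\{x_1\le -D\}$, we have $|x-y|\ge D-R\ge D/2$. Therefore this contribution is at most
\begin{equation*}
\frac{2}{\pi D}\nrm{\rho_Q^{main}}_{L^1_*}\nrm{\tld\rho}_{L^1}\le \frac{2}{\pi}\cdot 4\cdot 60\lambda^{1/2}\cdot\frac{\lambda^{1/2}}{C_0}.
\end{equation*}
Here I use $\nrm{\rho_Q^{main}}_{L^1_*}\le 4$ and \eqref{eq:rem-small}. By the choice $C_0=10R$ in \eqref{defn_C_0}, this is less than $\lambda/16$.

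\textbf{Step 3 (error part).} Here no distance separation is available, since $\rho_Q^{err}$ sits right next to the borderline. Instead I put the $x$-integral inside:
\begin{equation*}
\iint\frac{|\tld\rho(x)|\,y_2|\rho^{err}_Q(y)|}{|x-y|}\,dx\,dy\le \nrm{\rho_Q^{err}}_{L^1_*}\,\sup_y\int\frac{|\tld\rho(x)|}{|x-y|}\,dx .
\end{equation*}
The supremum is handled by the elementary bound $\int|f|/|x-y|\le 6\nrm{f}_{L^1}^{1/2}\nrm{f}_{L^\infty}^{1/2}$ already used in Lemma \ref{lem:vel-Q-decomp}. With Lemma \ref{lem:error-small} ($\nrm{\rho_Q^{err}}_{L^1_*}\le30\lambda$), \eqref{eq:rem-small} ($\nrm{\tld\rho}_{L^1}\le60\lambda^{1/2}$) and $\nrm{\tld\rho}_{L^\infty}\le A$, this gives $\lesssim \lambda\cdot A^{1/2}\lambda^{1/4}$. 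This is below $\lambda/16$ by \eqref{small_lambda}, taking $C_3$ larger if needed.

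Summing Steps 2 and 3 gives the claimed bound $\lambda/8$. The main obstacle is Step 1: one must realize that the interaction can be estimated without any $x_2$-weight on $\tld\rho$, since that weight is uncontrolled. The logarithmic form of $\bfG$ supplies exactly this. After that, the error part only needs the smallness of $\nrm{\rho_Q^{err}}_{L^1_*}$ together with the $L^1$--$L^\infty$ potential bound for $\tld\rho$.
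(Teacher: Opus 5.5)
Your proof is correct. For the error part it matches the paper; for the main part it takes a genuinely different and simpler route.

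\textbf{Error part.} The paper applies \eqref{eq:E-inter-unsymm} with the weight on $\rho_Q^{err}$, together with $\nrm{x_2^{-1}\tld\psi}_{L^\infty}\le\nrm{\tld v}_{L^\infty}\le CA^{1/2}\lambda^{1/4}$. Your kernel bound plus the $L^1$--$L^\infty$ potential estimate reproduces exactly this. A small correction to your motivation: \eqref{eq:E-inter-unsymm} in that orientation does not need $\nrm{\tld\rho}_{L^1_*}$. It fails on the main part only because it gives $\nrm{\tld v}_{L^\infty}\nrm{\rho_Q^{main}}_{L^1_*}=O(\lambda^{1/4})$.

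\textbf{Main part.} The paper uses $\log(1+a)\le a$, i.e.\ $\bfG(x,y)\le\frac1\pi\frac{x_2y_2}{|x-y|^2}$. This decays quadratically on $\{|x-y|\ge D/2\}$, but it puts the uncontrolled weight $x_2$ on $\tld\rho$. To handle that weight, the paper proves the trajectory bound \eqref{eq:claim_en}, $x_2\le 2|\Phi_t^{-1}(x)|+4\lambda^{1/6}|x-y|$, via Lemmas \ref{lem:S_R_to_H}, \ref{lem:far_cont} and \eqref{eq:vel-Q-decomp}, and then invokes the moment assumption \eqref{assump_moment}.

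Your height-free bound $\bfG(x,y)\le\frac1\pi\frac{y_2}{|x-y|}$ gives up one power of $|x-y|$, and this is affordable. Indeed $D^{-1}\nrm{\tld\rho}_{L^1}\le 60\lambda/C_0$ is already $O(\lambda)$ with a small constant. This is the same balance $D=C_0\lambda^{-1/2}$ that the paper exploits for $F_\mu^{main}$ in Lemma \ref{lem:F-mu}.

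\textbf{What each route buys.} Yours makes the lemma a fixed-time consequence of \eqref{eq:B4} and Lemmas \ref{lem:rem-small} and \ref{lem:error-small}, with no Lagrangian input. The paper's route gives an $o(\lambda)$ bound that does not depend on $C_0$ being large. The price is particle-height bookkeeping, which must also account for particles that started near the dipole and later crossed $\{x_1=-D\}$. As written, the paper bounds $\int|\Phi_t^{-1}(x)||\tld\rho(t,x)|\,dx$ only by the initial moment over $\{|z|\ge 10\}$. Those particles therefore need the additional bound $\nrm{\rho_0^{gain,T}}_{L^1}\le 60\lambda^{1/2}$. Your argument avoids this issue entirely.
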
 \begin{proof}
Let $t\in[0,T]$. We decompose \begin{equation*}
	\begin{split}
		E_{inter}[ \rho_{Q}, \tld{\rho} ](t) = E_{inter}[ \rho_{Q}^{main}, \tld{\rho} ](t)  + E_{inter}[ \rho_{Q}^{err}, \tld{\rho} ](t) 
	\end{split}
\end{equation*} and for the last term, we simply use \eqref{eq:E-inter-unsymm} from Lemma \ref{prop:Energy-X}, Lemma \ref{lem:rem-small}, and Lemma \ref{lem:error-small} to bound \begin{equation*}
	\begin{split}
		\left| E_{inter}[ \rho_{Q}^{err}, \tld{\rho} ](t) \right| \le \frac12 \nrm{ x_{2}^{-1}\tld{\psi} }_{L^{\infty}}  \nrm{  \rho_{Q}^{err} }_{L^{1}_{*}} \le \frac12 \nrm{ \tld{v} }_{L^{\infty}}  \nrm{  \rho_{Q}^{err} }_{L^{1}_{*}} \leq  \frac{1}{2}{C}A^{1/2}  \lambda^{1/4}\cdot 30\lambda\leq \frac{1}{16}\lambda.
	\end{split}
\end{equation*} 
For the interaction term between $\rho_{Q}^{main}$ and $\tld{\rho}$,
we first claim that for any  $x,y\in \ohp$ satisfying $x_1<-D<-R<y_1,$  we have the following $x_2$-estimate:
\begin{equation}\label{eq:claim_en}
	x_2\leq 2|z|+ 4\lambda^{1/6}|x-y|,
\end{equation}
where $z=\Phi^{-1}_t(x)$. To prove the claim, 
we first recall
$Q= A^{loc, t}\cup  A^{far, t}$ (see \eqref{defn_A_loc_far}). Thus there are three possible cases for $z\in\ohp$:
$$z\in A^{loc,t},\quad z\in A^{far,t},\quad\mbox{or}\quad z\in\ohp\setminus  Q.$$  	
First consider the case when
$z\in A^{loc,t}$.  	
Then from $x_1<-D$ and from the estimate 
\eqref{eq:borderline-v} for $v_1$ and the first estimate \eqref{eq:vel-Q-decomp} for $v_2$ together with Lemma \ref{lem:S_R_to_H}, we have 
$x_2\leq H+2\lambda^{1/6}(-D-x_1)$ and $|x-y|\geq |x_1-y_1|\geq \frac{D}{2}+(-D-x_1)$. Thus we get
$$
x_2\leq H+2\lambda^{1/6}(|x-y|-\frac{D}{2})=
(H-2\lambda^{1/6}\frac{D}{2})+2\lambda^{1/6}|x-y|\leq 2\lambda^{1/6}|x-y|\leq 2|z|+ 4\lambda^{1/6}|x-y|.
$$ 
where we used $D=C_0/\sqrt\lambda$ and assumed $\lambda$ small enough.
Next,  we  consider $z\in A^{far,t}$.
Then,  Lemma \ref{lem:far_cont} implies 
$x_2\leq (2|z|+6H)+2\lambda^{1/6}(-D-x_1)$, 
which gives
$$
x_2\leq (2|z|+6H)+2\lambda^{1/6}(|x-y|-\frac{D}{2})\leq 2|z|+4 \lambda^{1/6}|x-y|.
$$ 	Lastly, we  consider $z\in \ohp\setminus Q$.
From $x_1\leq z_1\leq -D$, we know
$x_2\leq z_2+2\lambda^{1/6}t$ and $|x-y|\geq |x_1-z_1|	
\geq \frac{t}{2}$, which implies
$$
x_2\leq z_2+4\lambda^{1/6}|x-y|\leq 2|z|+4\lambda^{1/6}|x-y|.
$$ We proved the above claim \eqref{eq:claim_en}. 

Then, by the claim \eqref{eq:claim_en} and by $|x-y|\geq D/2$,  we compute	
\begin{equation*}
	\begin{split}
		\int_{ \bbR^2_+} \frac{x_2}{|x-y|^{2}}| \tld{\rho}(t,x)| \, dx&\leq 
		\left(\frac{2}{D}\right)^2\int_{ \bbR^2_+} 2|{\Phi^{-1}_{t}(x)}|\cdot | \tld{\rho}(t,x)| \, dx
		+4\lambda^{1/6}\cdot \frac{2}{D}				  \int_{ \bbR^2_+}  | \tld{\rho}(t,x)| \, dx
		\\
		&\leq  8\frac{\lambda}{C_0^2}\int_{ \bbR^2_+} |{z}|\cdot| \tld{\rho}(0,z)| \, dz
		+8\lambda^{1/6}\cdot \frac{\lambda^{1/2}}{C_0}				  \int_{ \bbR^2_+}  | \tld{\rho}(0,x)| \, dx\\
		&\leq \left(8\frac{\lambda}{C_0^2}+8\lambda^{1/6}\cdot \frac{\lambda^{1/2}}{C_0}				\right)\int_{ |z|\geq 10} |z|| \tld{\rho}(0,z)| \, dz\\  &\leq \left(8\frac{\lambda}{C_0^2}+8\lambda^{1/6}\cdot \frac{\lambda^{1/2}}{C_0}				\right)\cdot(2\delta)\leq \frac{\lambda}{100}.
	\end{split}
\end{equation*}

Therefore, we  can estimate \begin{equation}\label{en_inter_1}
	\begin{split}
		\left|E_{inter}[ \rho_{Q}^{main}, \tld{\rho} ](t)\right|  & = \left|\frac1{8\pi} \iint_{\bbR^2_+ \times \bbR^2_+} \rho_{Q}^{main}(t,y) \log\left( 1 + \frac{4x_2y_2}{|x-y|^{2}} \right) \tld{\rho}(t,x) \, dxdy \right| \\
		& \le \frac1{2\pi} \iint_{\bbR^2_+ \times \bbR^2_+} \frac{x_2y_2}{|x-y|^{2}} |\rho_{Q}^{main}(t,y) \tld{\rho}(t,x)| \, dxdy\\
		& \le \frac1{2\pi} \int_{\bbR^2_+} y_2 |\rho_{Q}^{main}(t,y)|\int_{ \bbR^2_+} \frac{x_2}{|x-y|^{2}}| \tld{\rho}(t,x)| \, dxdy\\
		& \le \frac{\lambda}{100}\frac1{2\pi}\|\rho_Q\|_{L^1_*} \le \frac{\lambda}{100}. \qedhere 
	\end{split}
\end{equation} 
\end{proof}

\subsubsection{Estimating perturbation energy}

\begin{lemma}\label{lem:pert_en}
We have $\int_{0}^{T}\left| \frac{d}{dt} \tld{E}(t) \right|dt\leq \frac{1}{30} \lambda$. 
\end{lemma}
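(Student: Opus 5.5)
The plan is to compute $\frac{d}{dt}\tld{E}(t)$ explicitly, remove the terms that vanish for structural reasons, and estimate what remains. One piece is a flux through the borderline $\{x_1=-D\}$, controlled by the impulse fluxes already estimated. The other is a non-local interaction with $v_Q$, controlled by the Lagrangian argument used for $F_\mu$ in Lemma \ref{lem:F-mu}.

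\textbf{Step 1: the formula for $\frac{d}{dt}\tld{E}$.} Since $\tld{\rho}=\rho\mathbf{1}_{\{x_1<-D\}}$ is transported by $v$ inside $\{x_1<-D\}$ and receives vorticity through $\{x_1=-D\}$, I expect
\begin{equation*}
\frac{d}{dt}\tld{E}(t)=\int \tld{\rho}\,(v_Q+\tld{v}-\dot p_1\bfe_1)\cdot\nb\tld{\psi}\,dx+\int_{\{x_1=-D\}}(-v_1)\,\tld{\psi}\,\rho\,dx_2 ,
\end{equation*}
where $\tld{\psi}=G[\tld{\rho}]$. Two of the interior terms vanish:
\begin{itemize}
\item $\tld{v}\cdot\nb\tld{\psi}=0$ pointwise, because $\tld{v}=-\nb^\perp\tld{\psi}$;
\item $\int\tld{\rho}\,\rd_{x_1}\tld{\psi}\,dx=0$, by translation invariance of $E$ in $x_1$ (equivalently, the symmetry of $\bfG$ under simultaneous $x_1$-translation).
\end{itemize}
Hence only $\int\tld{\rho}\,v_Q\cdot\nb\tld{\psi}$ and the boundary term $B(t)$ remain. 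To justify these manipulations I work with the smooth data of \S\ref{subsec:redu}, regularizing the indicator as in the proof of Lemma \ref{lem:I-mu-minus} if needed.

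\textbf{Step 2: the boundary term.} By Hardy's inequality as in Lemma \ref{lem:vel-L-infty}, $|\tld{\psi}|\le x_2\nrm{\tld{v}}_{L^\infty}\le CA^{1/2}\lambda^{1/4}x_2$ by \eqref{eq:rem-small}. Also $0\le -v_1\le \dot p_1-(v_{Q,1}+\tld{v}_1)$ on the borderline. Therefore
\begin{equation*}
|B(t)|\le CA^{1/2}\lambda^{1/4}\bigl(I_\mu^{(+)}(t)+I_\mu^{(-)}(t)\bigr).
\end{equation*}
Lemma \ref{lem:I-mu-minus} and \eqref{eq:B4-close} then give $\int_0^T|B|\,dt\le CA^{1/2}\lambda^{5/4}\le\lambda/100$ for $\lambda$ small.

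\textbf{Step 3: the interaction term.} I bound it crudely by
\begin{equation*}
\Bigl|\int\tld{\rho}\,v_Q\cdot\nb\tld{\psi}\,dx\Bigr|\le\nrm{\tld{v}}_{L^\infty}\int|\tld{\rho}|\,|v_Q|\,dx .
\end{equation*}
I then show that $\int_0^T\!\int|\tld{\rho}||v_Q|\,dx\,dt\lesssim\lambda^{11/16}$, which after multiplying by $CA^{1/2}\lambda^{1/4}$ is $\ll\lambda$. For this I pass to Lagrangian coordinates exactly as in Lemma \ref{lem:F-mu}. A particle of $\tld{\rho}(t)$ is one of two kinds:
\begin{itemize}
\item a gained particle $z\in A^{gain,T}$ with $t>\tau(z)$; these have total $L^1$ mass $\le 60\lambda^{1/2}$ by \eqref{eq:gain-small};
\item a particle already in $\bbR^2_+\setminus Q$ at time $0$; these have mass $\le\nrm{\tld{\rho}(0)}_{L^1}\le 2\dlt$.
\end{itemize}
In both cases \eqref{eq:borderline-v} gives $\Phi_1(t,z)\le -D-(t-\tau)/2$, with $\tau=0$ for the second kind. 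I split $v_Q=v_Q^{main}+v_Q^{err}$ and estimate each piece.
\begin{itemize}
\item For $v_Q^{main}$, the kernel bound \eqref{eq:K-decay}, $|\bfK(x,y)|\le Cy_2/|x_1-y_1|^2$, together with $\nrm{\rho_Q^{main}}_{L^1_*}\le4$, gives $|v_Q^{main}(t,\Phi(t,z))|\le C(D+t-\tau)^{-2}$. Its time integral is $\le C/D=C\lambda^{1/2}/C_0$.
\item For $v_Q^{err}$, I use the minimum of $CA^{1/4}\lambda^{3/8}$ (Lemma \ref{lem:error-small}) and $C\lambda|\Phi_1+D|^{-2}$. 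Its time integral is $\le CA^{1/4}\lambda^{11/16}$, exactly as in the $F_\mu^{err}$ estimate.
\end{itemize}
Summing over particles with the two mass bounds above, and combining with Step 2, yields $\int_0^T|\frac{d}{dt}\tld{E}|\,dt\le\lambda/30$.

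\textbf{Main obstacle.} I expect the difficulty to be the rigorous derivation of the formula in Step 1, in particular the sign bookkeeping of the flux term. The interaction estimate near the borderline should be harmless here, because the crude $L^\infty$ bound on $\nb\tld{\psi}=\tld{v}$ already supplies an extra factor $\lambda^{1/4}$. If that factor turns out to be insufficient in the err-part, the fallback is to exploit the decay of $v_Q^{err}$ more sharply, as in the $F_\mu^{err}$ computation.
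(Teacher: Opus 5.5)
Your proposal is correct and follows the paper's proof essentially step for step. It uses the same formula for $\frac{d}{dt}\tld{E}$ and the same two cancellations, it bounds the borderline term by $CA^{1/2}\lambda^{1/4}\int_0^T(I_\mu^{(+)}+I_\mu^{(-)})\,dt$, and it bounds the interior term by $\nrm{\tld{v}}_{L^\infty}$ times the Lagrangian estimates of Lemma \ref{lem:F-mu}. Two small differences: you handle both components of $v_Q$ at once via \eqref{eq:K-decay} instead of splitting $j=1,2$, and you also count particles initially in $\bbR^2_+\setminus Q$, which the paper's Lagrangian rewriting in Lemma \ref{lem:F-mu} does not write out.

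One bookkeeping fix is needed. Multiply your per-particle time integrals by the gained mass $60\lambda^{1/2}$: this gives $\int_0^T\!\int|\tld{\rho}||v_Q|\,dx\,dt=O(\lambda)$, not $O(\lambda^{11/16})$ as you state. With $\lambda^{11/16}$, the factor $\lambda^{1/4}$ would only give $\lambda^{15/16}$, which is not $\ll\lambda$.
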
 
Once we prove the above lemma, it closes \eqref{eq:B5} since we have, from \eqref{eq: SEI} and \eqref{mom_rho},
\begin{equation*}
\begin{split}
	|\tld{E}(0)|\leq C_L\|\tilde\rho(0)\|_{L^2}\|\tilde\rho(0)\|_{L^1_*}\leq C_L\cdot \delta\cdot(2\delta)\leq \frac{\lambda}{100}
\end{split}
\end{equation*} which implies $$	\tld{E}(T) \le \tld{E}(0) +\int_{0}^{T}\left| \frac{d}{dt} \tld{E}(t) \right|dt\leq \frac{\lambda}{8}.$$
\begin{proof}[Proof of Lemma \ref{lem:pert_en}]
We compute 
\begin{equation*}
	\begin{split}
		\frac{d}{dt} \tld{E}(t)  & = \int_{ \bbR^{2}_{+} \backslash Q }(\partial_t\tilde\rho)\tilde\psi\,dx=- \int_{ \bbR^{2}_{+} \backslash Q } \nb\cdot( v \tilde\rho )  \tilde\psi dx \\
		& = \int_{ \bbR^{2}_{+} \backslash Q } ( \nb\tld{\psi} \cdot v ) \tld{\rho} \, dx - \int_{ \{ x_{1} = - D \} } ( v_{Q,1} + \tld{v}_{1} - \dot{p}_{1} ) \tld{\psi} \rho \,  dx_{2}.  
	\end{split}
\end{equation*}

Using \eqref{eq:rem-small} to estimate $\nrm{x_{2}^{-1}\tld\psi}_{L^\infty} \le \nrm{\tld{v}}_{L^\infty}\leq {C}A^{1/2} \lambda^{1/4}$, we easily obtain \begin{equation*}
	\begin{split}
		&\int_{0}^{T} 	\left| \int_{ \{ x_{1} = - D \} } ( v_{Q,1} + \tld{v}_{1} - \dot{p}_{1} ) \tld{\psi} \rho \, dx_{2} \right| dt \leq {C}A^{1/2} \lambda^{1/4} \int_{0}^{T} 	\left| \int_{ \{ x_{1} = - D \} } \underbrace{(\dot{p}_{1} -( v_{Q,1} + \tld{v}_{1})   )}_{>0}x_2 |\rho| \, dx_{2} \right| dt \\ &\le {C}A^{1/2} \lambda^{1/4}\left( \int_{0}^{T} I_{\mu}^{(+)}  + I_{\mu}^{(-)} dt \right) \leq  {C}A^{1/2} \lambda^{1/4}\cdot(20\lambda) \leq \frac{\lambda}{100},
	\end{split}
\end{equation*}
where we recall the definition 
\eqref{defn_I_mu}		
and used \eqref{eq:I-mu-minus} and \eqref{eq:I-mu-plus}.\\ 

To estimate the other term, we begin by noting that 
\begin{equation*}
	\begin{split}
		\tld{F}_{E}(t) &:= \int_{\bbR^2_+ } ( \nb\tld{\psi} \cdot v ) \tld{\rho} dx =  \int_{\bbR^2_+ } ( \nb\tld{\psi} \cdot v_{Q} ) \tld{\rho} dx, 
	\end{split}
\end{equation*} 
using the cancellations  
\begin{equation*}
	\begin{split}
		\int_{\bbR^2_+ } ( \nb\tld{\psi} \cdot \tld{v} ) \tld{\rho} \,  dx =-\int_{\bbR^2_+ } ( \nb\tld{\psi} \cdot \nb^\perp\tld{\psi} ) \tld{\rho} \,  dx = 0  
	\end{split}
\end{equation*} and  
\begin{equation*}
	\begin{split}
		\int_{\bbR^2_+ } ( \nb\tld{\psi} \cdot ( \dot{p}_{1} \bfe_{1} ) ) \tld{\rho}  \, dx= -\dot{p}_{1}(t)\int_{\bbR^2_+ } \tld{v}_2  \tld{\rho}  \, dx =0.
	\end{split}
\end{equation*} 
We decompose $\tld{F}_{E}(t) =\tld{F}_{E, 1}(t) + \tld{F}_{E,2}(t)$
where  \begin{equation*}
	\begin{split}
		\tld{F}_{E, j}(t) :=  \int_{\bbR^2_+ }  \rd_{j}\tld{\psi} \,  v_{Q, j} \, \tld{\rho} \, dx. 
	\end{split}
\end{equation*}

\medskip

\noindent \textbf{Estimate of $\tld{F}_{E, 2}(t)$}. This is parallel to estimating $F_\mu$ in Lemma \ref{lem:F-mu}: after expanding $v_{Q,2}$ as an integral against $\rho_{Q}$, we have \begin{equation*}
	\begin{split}
		\tld{F}_{E, 2}(t)  =  \frac1{2\pi}\int_{}  \int_{ \{ -R \le y_{1}  \} } \rd_{2}\tilde{\psi}(t,x)  \frac{x_1-y_1}{|x-y|^{2}} \frac{4x_2y_2}{|x-\bar{y}|^2} \rho_{Q}(t,y) \tld{\rho}(t,x) dy dx .
	\end{split}
\end{equation*} Simply using that $\nrm{  \rd_{2}\tilde{\psi}  }_{L^\infty} \le \nrm{\tld{v}}_{L^\infty}\leq
{C}A^{1/2} \lambda^{1/4}		
$, we are in the same situation as in
the proof for the estimate for 	for $F_{\mu}$	in Lemma \ref{lem:F-mu}. Thus we decompose
$$\tld{F}_{E, 2}(t) =\tld{F}_{E, 2}^{main}(t) +\tld{F}_{E, 2}^{err}(t)\quad\mbox{from}\quad \rho_{Q}(t)=\rho^{main}_{Q}(t)+\rho^{err}_{Q}(t) $$ and
follow the exactly same way to get
\begin{equation*}
	\begin{split}
		\int_{0}^{T} \left| \tld{F}_{E, 2}(t)  \right| dt \leq
		{C}A^{1/2} \lambda^{1/4}\cdot\left(\frac{\lambda}{20}+\frac{\lambda}{20}\right)				\leq
		\frac{ \lambda}{100}. 
	\end{split}
\end{equation*}

\medskip

\noindent \textbf{Estimate of $\tld{F}_{E, 1}(t)$}. Recalling the definition \eqref{eq:psi-G}, 
we have 
\begin{equation*}
	\begin{split}
		\tld{F}_{E, 1}(t) = -\iint_{\bbR^2_+ \times \bbR^2_+}
		\bfK_1(x,y)
		(\rd_{1}\tld{\psi} \, \tld{\rho})(t,x) \, \rho_{Q}(t,y) \, dxdy. 
	\end{split}
\end{equation*} Using
$ \nrm{ \rd_{1}\tld{\psi} }\leq\|\tilde{v}\|_{L^\infty}\leq {C}A^{1/2} \lambda^{1/4}$ and \eqref{eq:K-decay},		
we   estimate \begin{equation*}
	\begin{split}
		&  \left| \tld{F}_{E, 1}(t)  \right|   \le {C}A^{1/2} \lambda^{1/4} \iint_{\bbR^2_+ \times \bbR^2_+} \frac{y_{2}}{ |x-y||x-\bar{y}|}  |\tld{\rho}(t,x)| \, |\rho_{Q}(t,y)| \, dxdy   \\
		& = {C}A^{1/2} \lambda^{1/4} \int_{\bbR^2_+ } |\tld{\rho}(t,x)|\left( \int_{\bbR^2_+ }\frac{y_{2}}{ |x-y||x-\bar{y}|}  \, |\rho_{Q}(t,y)| \, dy \right) dx\\
		& \le  {C}A^{1/2} \lambda^{1/4}\left(G^{main}(t)+G^{err}(t)\right),
	\end{split}
\end{equation*}  where 
$$
G^{\star}(t):=\int_{\bbR^2_+ } |\tld{\rho}(t,x)|\left( \int_{\bbR^2_+ }\frac{y_{2}}{ |x-y||x-\bar{y}|}  \, |\rho^{\star}_{Q}(t,y)| \, dy \right) dx,\quad \star\in\{main, err\}.
$$
Again, we follow the same idea we used to estimate $F_\mu^{main}$ and $F_\mu^{err}$ in the proof of Lemma \ref{lem:F-mu}		
to get 
\begin{equation*}
	\begin{split}
		\int_{0}^{T} \left| \tld{F}_{E, 1}(t)  \right|  dt \le {C}A^{1/2} \lambda^{1/4}\cdot\left(\frac{\lambda}{20}+\frac{\lambda}{20}\right)\leq \frac{\lambda}{100}.
	\end{split}
\end{equation*}
The proof is complete.  \end{proof}


\subsection{Extending and closing (B1): Finding time-dependent center}\label{subsec:1}
We define \begin{equation*}
\begin{split}
	\hat Q(t):=\begin{cases}
		\{x\in\mathbb R^2_+\ :\ x_1>p_1(t)-D\},\quad &\ \mbox{ for }\ t\in[0,T],\\
		\{x\in\mathbb R^2_+\ :\ x_1>p_1(T)-D\}, &\ \mbox{ for } \ t>T,
	\end{cases}
\end{split}
\end{equation*} and write $\hat \omega(t,x) := \omega(t,x) \mathbf{1}_{\hat Q(t)}(x)$. We already have, for $t\in [0,T]$,
\begin{equation*}
\begin{split}
	\nrm{\hat \omega}_{L^2}\leq \kpp[\omega_{Lamb}]+\frac{\lambda}{2},\quad 
	{\nrm{\hat \omega}_{L^1_*}\leq \mu[\omega_{Lamb}]+\frac{\lambda}{2},\quad }
	E[\hat \omega]\geq E[\omega_{Lamb}]-\frac{\lambda}{2},\quad 
	\nrm{\omega-\hat \omega}_{L^2}\leq 30 \lambda^{1/2},
\end{split}
\end{equation*} where the last inequality comes from 
\eqref{eq:rem-small}.
By continuity in time of these quantities, we can find a sufficiently small $0<\eta_1\ll 1$ such that for $t\in [0,T+\eta_1]$, 
\begin{equation*}
\begin{split}
	\nrm{\hat \omega}_{L^2}\leq \kpp[\omega_{Lamb}]+ \lambda,\quad 
	\nrm{\hat \omega}_{L^1_*}\leq \mu[\omega_{Lamb}]+\lambda,\quad 
	E[\hat \omega]\geq E[\omega_{Lamb}]-\lambda,\quad 
	\nrm{\omega-\hat \omega}_{L^2}\leq 60 \lambda^{1/2}.
\end{split}
\end{equation*}
Recall the constant $M_0$ in \eqref{defn_M_0}.
Then, we take $\lambda$ smaller (if necessary) to guarantee $\lambda<\lambda_1$ where $\lambda_1>0$ is the constant from Proposition \ref{thm:AC2019} with the choice
$$
\kappa=\kappa[\omega_{Lamb}], \quad \mu=\mu[\omega_{Lamb}],\quad M=10A, \quad \varep_1=\varep/M_0.
$$ Then applying the proposition
to $\hat\omega$,
we get a function $\tau:[0,T+\eta_1]\to\mathbb{R}$ 
such that 
\begin{equation}
\nrm{\hat \omega(t,\cdot)-\omega_{Lamb}(\cdot-\tau(t)\bfe_1)}_{L^2\cap L^1_*(\mathbb R_+^2)}\leq \frac{\eps}{M_0},\quad \mbox{ for all }\ t\in [0,T+\eta_1].
\end{equation} 
From now on, we consider $t \in [0,T+\eta_1]$ unless stated otherwise.
We define $\omega^{\tau,(+)}_{rem}(t,x) := \omega^{(+)}(t,x)-\omega_{Lamb}(x-\tau(t)\bfe_{1})$ and estimate 
\begin{equation*}
\begin{split}
	\nrm{ \omega^{\tau,(+)}_{rem}(t,\cdot)  }_{L^2(\mathbb R_+^2)} &
	\leq 	\nrm{  \omega(t,\cdot)-\omega_{Lamb}(\cdot-\tau(t)\bfe_1)}_{L^2(\mathbb R_+^2)}\\&\leq \nrm{  \hat \omega(t,\cdot)-\omega_{Lamb}(\cdot-\tau(t)\bfe_1)}_{L^2(\mathbb R_+^2)}+ \nrm{  \hat \omega(t,\cdot)-\omega(t,\cdot)}_{L^2(\mathbb R_+^2)}\\
	&	\leq \frac{\eps}{M_0}+60 \lambda^{1/2}.
\end{split}
\end{equation*}

To estimate $\nrm{ \omega^{\tau,(+)}_{rem}(t,\cdot)  }_{L^1(\mathbb R_+^2)}$, we first
note 
$
|\tau(0)|<10
$
from the initial condition, which implies
that $\omega_{Lamb}(\cdot-\tau(0)\bfe_1)$ is supported in $S_R$. Thus, by H\"older, we compute
\begin{equation}\label{eq:tau0}
\begin{split}&
	\nrm{  \omega^{(+)}(0,\cdot)-\omega_{Lamb}(\cdot-\tau(0)\bfe_1)}_{L^1(\mathbb R_+^2)}\\&\quad\quad\leq \sqrt{2R^2}\cdot \nrm{  \omega^{(+)}(0,\cdot)-\omega_{Lamb}(\cdot-\tau(0)\bfe_1)}_{L^2(S_R)}+\nrm{  \omega^{(+)}(0,\cdot)}_{L^1(\mathbb R_+^2\setminus S_R)}\\&\quad\quad\leq 2R  \left(\frac{\varep}{M_0}+60\lambda^{1/2}\right)+\delta,
\end{split}
\end{equation} which gives, by Lemma \ref{L2_to_L1},
\begin{equation}\label{eq:postive-lamb_L1_fin}
\begin{split}
	&\nrm{ \omega^{\tau,(+)}_{rem}(t,\cdot)  }_{L^1(\mathbb R_+^2)} 
	\\& \quad \leq
	4\nrm{  \omega^{(+)}(t,\cdot)-\omega_{Lamb}(\cdot-\tau(t)\bfe_1)}_{L^2(\mathbb R_+^2)} +\nrm{  \omega^{(+)}(0,\cdot)-\omega_{Lamb}(\cdot-\tau(0)\bfe_1)}_{L^1(\mathbb R_+^2)}\\
	& \quad 	\leq 4\left(\frac{\eps}{M_0}+60 \lambda^{1/2}\right)+2R \left(\frac{\varep}{M_0}+60\lambda^{1/2}\right)+\delta\leq \frac{3R\varep}{M_0}+\delta+200R\lambda^{1/2}.
\end{split}
\end{equation} 
Now, recalling the definition \eqref{eq:h}, we compute 
\begin{equation*}
\begin{split}
	h(t, \tau(t))&=\int_{\mathbb R_+^2} \omega^{(+)}(t,x+\tau(t)\bfe_1)g(x_1) dx =\int_{\mathbb R_+^2} \omega_{Lamb} (x)g(x_1) dx+\int_{\mathbb R_+^2} \omega^{\tau,(+)}_{rem}(t,x+\tau(t)\bfe_1)g(x_1) dx\\
	&=\int_{\mathbb R_+^2} \omega^{\tau,(+)}_{rem}(t,x+\tau(t)\bfe_1)g(x_1) dx,
\end{split}
\end{equation*}
where we have used the odd symmetry of $g$ to deduce $\int_{\mathbb R_+^2} \omega_{Lamb} (x)g(x_1) dx=0$. Then it follows from  \eqref{eq:postive-lamb_L1_fin} that 
\begin{equation}\label{eq:H-tau}
|h(t, \tau(t))|
\leq \left( \frac{3R\varep}{M_0}+\delta+200R\lambda^{1/2}\right) \|g\|_{L^\infty}	
\leq \left( \frac{12R \varep}{M_0}+4\delta+800R\lambda^{1/2}\right).
\end{equation}
For points $(t,p)$ with $p\in (\tau(t)-2, \tau(t)+2)$, we compute 
\begin{equation*}
\begin{split}
	\partial_p h(t, p)&=-\int_{\mathbb R_+^2} \omega^{(+)}(t,x+p\bfe_1)g'(x_1) dx \\&=-\int_{\mathbb R_+^2} \omega_{Lamb} (x+(p-\tau(t))\bfe_1)g'(x_1)  dx-\int_{\mathbb R_+^2} \omega^{\tau,(+)}_{rem}(t,x+p\bfe_1)g'(x_1) dx\\& =-\int_{\mathbb R_+^2} \omega_{Lamb} (x)  dx-\int_{\mathbb R_+^2} \omega^{\tau,(+)}_{rem}(t,x+p\bfe_1)g'(x_1) dx.
\end{split}
\end{equation*}
Using the estimate \eqref{eq:postive-lamb_L1_fin} again, we infer from the above identity that
\begin{equation}\label{eq:H-deriv}
|\partial_p h(t, p)+m_L|\leq \left( \frac{3R\varep}{M_0}+\delta+200R\lambda^{1/2}\right) \|g'\|_{L^\infty}
\leq \left( \frac{12R\varep}{M_0}+4\delta+800R\lambda^{1/2}\right). 
\end{equation}
By choosing $\eps, \delta,\lambda$  smaller  if necessary to have
$$ \frac{12R\varep}{M_0}\leq \frac{m_L}{10},\quad 4\delta\leq\frac{m_L}{10},\quad\mbox{and}\quad 800R\lambda^{1/2}\leq \frac{m_L}{10},\quad \mbox{respectively},
$$
\eqref{eq:H-deriv} yields
\begin{equation}\label{eq:H-deriv-upbound}
-\frac{3m_L}{2}\leq\partial_p h(t, p)<-\frac{m_L}{2}\quad \text{for all}\quad p\in (\tau(t)-2, \tau(t)+2).
\end{equation}
Combining \eqref{eq:H-tau} and \eqref{eq:H-deriv-upbound}, we use the Implicit Function Theorem to find a unique $\bar{p}_1\in C^1([0,T+\eta_1])$ (by making $\eta_1>0$ smaller if necessary) such that
$$\bar{p}_1(t)\in \left[\tau(t)-\frac{ 2}{m_L}\cdot \left( \frac{12R\varep}{M_0}+4\delta+800R\lambda^{1/2}\right)	, \tau(t)+\frac{2}{m_L}\cdot  \left( \frac{12R\varep}{M_0}+4\delta+800R\lambda^{1/2}\right) \right]$$
and $$h(t,\bar{p}_1(t))=0 \quad \text{for all} \quad t\in [0,T+\eta_1].$$ 
We observe $$\bar{p}_1(t)=p_1(t)\quad \mbox{for all}\quad t\in[0,T]$$ where $p_1(\cdot)$ is the function from \eqref{eq:B1}. From now on, we simply denote $p_1(\cdot)$ instead of $\bar{p}_1(\cdot)$. Note that it is differentiable with its derivative is given by \eqref{eq:deriv-p}. \\

The Lipschitz continuity of the Lamb dipole with the Lipschitz constant $C_{Lip}$ gives
\begin{equation}\label{diff-positve-lamb}
\begin{split}
	&\nrm{\omega^{(+)}(t,\cdot)-\omega_{Lamb}(\cdot-p_1(t)\bfe_{1})}_{L^2}\leq \nrm{\omega(t,\cdot)-\omega_{Lamb}(\cdot-p_1(t)\bfe_{1})}_{L^2}\\&\leq \nrm{\omega(t,\cdot)-\omega_{Lamb}(\cdot-\tau(t)\bfe_{1})}_{L^2}+\nrm{\omega_{Lamb}(\cdot)-\omega_{Lamb}(\cdot-(\tau(t)-p_1(t))\bfe_{1})}_{L^2}\\&\leq \left(\frac{\eps}{M_0}+60 \lambda^{1/2}\right)+C_{Lip} \sqrt{\pi} |\tau(t)-p_1(t)|\\
	&\leq \left(\frac{\eps}{M_0}+60 \lambda^{1/2}\right)+C_{Lip} \sqrt{\pi}\cdot \frac{ 2}{m_L}\cdot \left( \frac{12R\varep}{M_0}+4\delta+800R\lambda^{1/2}\right)	
	\\
	&\leq \left(\frac{10C_{Lip}}{m_L}+1\right)\cdot \left( \frac{12R\varep}{M_0}+4\delta+800R\lambda^{1/2}\right)	
\end{split}
\end{equation}
and 
\begin{equation}\label{diff-positve-lamb-impluse}
\begin{split}
	&\nrm{\hat\omega(t,\cdot)-\omega_{Lamb}(\cdot-p_1(t)\bfe_{1})}_{L^1_*} \\ &\leq \nrm{\hat \omega (t,\cdot)-\omega_{Lamb}(\cdot-\tau(t)\bfe_{1})}_{L^1_*}+\nrm{\omega_{Lamb}(\cdot)-\omega_{Lamb}(\cdot-(\tau(t)-p_1(t))\bfe_{1})}_{L^1_*}\\&\leq \frac{\eps}{M_0}+C_{Lip} \pi |\tau(t)-p_1(t)|\\
	&\leq \frac{\eps}{M_0}+C_{Lip} \pi\cdot \frac{ 2}{m_L}\cdot \left( \frac{12R\varep}{M_0}+4\delta+800R\lambda^{1/2}\right)\\
	&\leq \left(\frac{10C_{Lip}}{m_L}+1\right)\cdot \left( \frac{12R\varep}{M_0}+4\delta+800R\lambda^{1/2}\right)	.
\end{split}
\end{equation}
Next, we recall  \eqref{eq:deriv-p}: $$	\dot{p}_{1}(t) = \frac{(\partial_t  h)(t, p_1(t))}{(-\partial_p h)(t, p_1(t))}.$$ Since we have already computed $(\partial_p h)(t, p_1(t))$ in \eqref{eq:H-deriv}, it remains to estimate its numerator $(\partial_t  h)(t, p_1(t))$. As in \eqref{eq:deriv-t-H_}, we compute 
\begin{equation}\label{eq:deriv-t-H}
\begin{split}
	(\partial_t  h)(t, p_1(t))&=\int_{\mathbb{R}^2_{+}} u_1(t, x+p_1(t)\bfe_1)\omega^{(+)} (t, x+p_1(t)\bfe_1) g'(x_1) dx\\
	&=\int_{\mathbb{R}^2_{+}} u_{Lamb, 1}(  x )\omega_{Lamb} (  x ) g'(x_1) dx\\
	&\quad+\int_{\mathbb{R}^2_{+}} [u_1(t, x+p_1(t)\bfe_1)-u_{Lamb, 1}(  x )] \omega^{(+)} (t, x+p_1(t)\bfe_1) g'(x_1) dx\\
	&\quad+\int_{\mathbb{R}^2_{+}} u_{Lamb, 1}(  x ) [\omega^{(+)}(t, x+p_1(t)\bfe_1) -\omega_{Lamb} ( x ) ]g'(x_1) dx. 
\end{split}
\end{equation}
As in \eqref{ml_integral}, the first term on the right hand side of \eqref{eq:deriv-t-H} is equal to $m_L$.
The last two terms on the right hand side of \eqref{eq:deriv-t-H} can be   estimated using \eqref{diff-positve-lamb}.
Indeed, as in \eqref{eq:tau0}, we first estimate,  by \eqref{diff-positve-lamb} and by noting $|p_1(0)|\leq 10$,
\begin{equation*}
\begin{split}
	\nrm{\omega_0(\cdot)-\omega_{Lamb}(\cdot-p_1(0)\bfe_{1})}_{L^1}&\leq \sqrt{2R^2}\nrm{\omega_0(\cdot)-\omega_{Lamb}(\cdot-p_1(0)\bfe_{1})}_{L^2(S_R)}+\nrm{\omega_0(\cdot)}_{L^1(\mathbb{R}^2_+\setminus S_R)}\\
	&{\leq \sqrt{2R^2}\cdot  \left(\frac{10C_{Lip}}{m_L}+1\right)\cdot \left( \frac{12R\varep}{M_0}+4\delta+800R\lambda^{1/2}\right)	+\delta}\\	
	&{\leq 2R\cdot  \left(\frac{10C_{Lip}}{m_L}+1\right)\cdot \left( \frac{12R\varep}{M_0}+4\delta+800R\lambda^{1/2}\right).}\\	
\end{split}
\end{equation*}
Then, by Lemma \ref{L2_to_L1} and by \eqref{diff-positve-lamb} again, we get
\begin{equation}\label{eq:postive-lamb_l1}
\begin{split}
	& \nrm{\omega(t,\cdot+p_1(t)\bfe_{1})-\omega_{Lamb}(\cdot)}_{L^1}\leq 4 \nrm{\omega(t,\cdot+p_1(t)\bfe_{1})-\omega_{Lamb}(\cdot)}_{L^2}+ \nrm{\omega_0(\cdot+p_1(0)\bfe_{1})-\omega_{Lamb}(\cdot)}_{L^1}\\
	&{\leq  3R\left(\frac{10C_{Lip}}{m_L}+1\right)\cdot \left( \frac{12R\varep}{M_0}+4\delta+800R\lambda^{1/2}\right).}\\
\end{split}
\end{equation}
From \eqref{eq:vel-L-infty-alpha2} with $\alpha=1/2$,
we estimate the corresponding velocity by \eqref{diff-positve-lamb} and by \eqref{eq:postive-lamb_l1}:
\begin{equation*}
\begin{split}
	& \nrm{u(t,\cdot+p_1(t)\bfe_{1})-u_{Lamb}(\cdot)}_{L^\infty}\\&\leq 	 C\nrm{\omega(t,\cdot+p_1(t)\bfe_{1})-\omega_{Lamb}(\cdot)}_{L^2}^{1/2} \cdot(2A)^{1/4}\cdot 
	\nrm{\omega(t,\cdot+p_1(t)\bfe_{1})-\omega_{Lamb}(\cdot)}_{L^1}^{1/4}
	\\
	&\leq  CA^{1/4}\left[{R^{1/3}}\left(\frac{10C_{Lip}}{m_L}+1\right)\cdot \left( \frac{12R\varep}{M_0}+4\delta+800R\lambda^{1/2}\right)	\right]^{3/4}.
\end{split}
\end{equation*}
Thus we  estimate 
the second integral on the right-hand side of \eqref{eq:deriv-t-H}:
\begin{equation*}
\begin{split}
	&\left| \int_{\mathbb{R}^2_{+}} [u_1(t, x+p_1(t)\bfe_1)-u_{Lamb, 1}(  x )] \omega^{(+)} (t, x+p_1(t)\bfe_1) g'(x_1) dx\right|\\&\leq	\nrm{u(t,\cdot)-u_{Lamb}(\cdot-p_1(t)\bfe_{1})}_{L^\infty}\cdot\|\omega(t,\cdot)\|_{L^1}\cdot\|g'\|_{L^\infty}\\
	&\leq  CA^{1/4}\left[{R^{1/3}}\left(\frac{10C_{Lip}}{m_L}+1\right)\cdot \left( \frac{12R\varep}{M_0}+4\delta+800R\lambda^{1/2}\right)	\right]^{3/4}.
\end{split}
\end{equation*}
For the last integral on the right-hand side of \eqref{eq:deriv-t-H}, we simply get, by \eqref{diff-positve-lamb}, 
\begin{equation*}
\begin{split}
	& \left|\int_{\mathbb{R}^2_{+}} u_{Lamb, 1}(  x ) [\omega^{(+)}(t, x+p_1(t)\bfe_1) -\omega_{Lamb} ( x ) ]g'(x_1) dx \right|\\&\leq \|u_{Lamb}\|_{L^2}\cdot
	\nrm{\omega^{(+)}(t,\cdot)-\omega_{Lamb}(\cdot-p_1(t)\bfe_{1})}_{L^2}
	\cdot\|g'\|_{L\infty}\\
	&\leq 100\cdot \left(\frac{10C_{Lip}}{m_L}+1\right)\cdot \left( \frac{12R\varep}{M_0}+4\delta+800R\lambda^{1/2}\right).	
\end{split}
\end{equation*}
In sum,  we 
get the following result:
$$|(\partial_t  h)(t, p_1(t))-m|\leq   CA^{1/4}\left[{R^{1/3}}\left(\frac{10C_{Lip}}{m_L}+1\right)\cdot \left( \frac{12R\varep}{M_0}+4\delta+800R\lambda^{1/2}\right)	\right]^{3/4}\leq C A^{1/4}\varep^{3/4}$$ by making $\lambda, \dlt$ small. 
Recalling 
\eqref{eq:deriv-p} and
combining this bound and \eqref{eq:H-deriv} gives 
\begin{equation}\label{eq:deriv-t-p}
\left|\frac{d}{dt} p_1(t)-1 \right| \leq  CA^{1/4}\eps^{3/4}\leq \eps^{1/2}\quad \text{for all}\quad t\in [0, T+\eta_1].
\end{equation} 
by taking $\eps$ smaller if necessary.
A simple corollary of \eqref{eq:deriv-t-p} is that $p_1(t)$ is a strictly increasing function and therefore $p_1(t)>p_1(T)$ provided $t\in (T, T+\eta_1]$. This means $$\{x\in\mathbb R^2_+\ :\ x_1>p_1(t)-D\}\subset \{x\in\mathbb R^2_+\ :\ x_1>p_1(T)-D \}.$$ Therefore, for any $t\in [0, T+\eta_1]$, we can deduce from \eqref{diff-positve-lamb} and \eqref{diff-positve-lamb-impluse} that 
\begin{equation}\label{eq:closing-B2-diff-Lamb}
\begin{split}
	&\nrm{\rho_Q-\omega_{Lamb}}_{L^2\cap L^1_*}
	\\&=\nrm{\omega(t,\cdot)-\omega_{Lamb}(\cdot-p_1(t)\bfe_{1})}_{L^2(\{x_1>p_1(t)-D\})}+\nrm{\omega(t,\cdot)-\omega_{Lamb}(\cdot-p_1(t)\bfe_{1})}_{L^1_*(\{x_1>p_1(t)-D\})}\\		
	&\leq \nrm{\omega(t,\cdot)-\omega_{Lamb}(\cdot-p_1(t)\bfe_{1})}_{L^2}+\nrm{\hat\omega(t,\cdot)-\omega_{Lamb}(\cdot-p_1(t)\bfe_{1})}_{L^1_*}\\
	&\leq  2\cdot \left(\frac{10C_{Lip}}{m_L}+1\right)\cdot \left( \frac{12R\varep}{M_0}+4\delta+800R\lambda^{1/2}\right)	\leq \frac{\eps}{240}+\frac{\eps}{240}+\frac{\eps}{240}=\frac{\eps}{80}<\frac{\varep}{20}.
\end{split}
\end{equation}
Here, we used the definition of $M_0$ in \eqref{defn_M_0} and assumed $\lambda, \dlt$ small. 
The estimate \eqref{eq:closing-B2-diff-Lamb} holds for any $t\in[0,T+\eta_1]$ which closes \eqref{eq:B1}.

\subsection{Proof of the main theorem}\label{subsec:proof-main}

We are ready to complete the proof of the main theorem. 

\begin{proof}[Proof of Theorem \ref{thm:main}]
Under the bootstrap assumptions \eqref{eq:B1}--\eqref{eq:B5} for $t\in [0,T]$,  we already have proved  \eqref{eq:B1}  for all $t\in [0, T+\eta_1]$. Using the continuity of $p_1$ in $t$, \eqref{closing-B2}, \eqref{closing-B3}, \eqref{eq:B4-close} and  \eqref{closing-B5}, we can find $  \eta_2\in (0,   \eta_1)$ such that  \eqref{eq:B2}--\eqref{eq:B5} still hold for all $t\in [0, T+\eta_2]$. Therefore, by this extension property and the continuity of \eqref{eq:B1}--\eqref{eq:B5} in $t$,  we conclude that  \eqref{eq:B1}--\eqref{eq:B5} hold for all $t\in [0,+\infty)$. Indeed, let $$T_{\max} := \sup\{\,T \ge T_0 : \eqref{eq:B1}\mbox{--}\eqref{eq:B5} \ \text{hold  on}\ [0,T]\,\}.$$ Here $T_0>0$ is defined in Proposition \ref{pro:starting time}. 
If $T_{\max} < \infty$, the continuity yields \eqref{eq:B1}--\eqref{eq:B5} on $[0,T_{\max}]$. 
Then extension property gives an $\eta > 0$ such that \eqref{eq:B1}--\eqref{eq:B5} hold  on $[0,T_{\max}+\eta]$, contradicting the definition of $T_{\max}$. Hence $T_{\max} = \infty$.
Then, we use  Lemma \ref{L2_to_L1} with \eqref{eq:closing-B2-diff-Lamb} and \eqref{est_p_1(0)} to get
$$
\nrm{\rho_Q-\omega_{Lamb}}_{L^1}\leq\frac{4\eps}{80}+C\delta\leq \frac{\varep}{10}.
$$
Finally, by \eqref{eq:rem-small}, we get for any $t\geq 0$,
$$	\nrm{\omg(t,\cdot + p_{1}(t) \bfe_{1}) -  {\omg}_{Lamb}( \cdot  )}_{(L^{2}\cap L^1)(\ohp) \cap L^{1}_{*}{( \{ x_{1} \geq - C_0\lambda^{-1/2} \}  )}} < \varep,$$ which  finish the proof of Theorem \ref{thm:main} by recalling \S \ref{subsec:redu}.  \end{proof}

\bibliographystyle{abbrv}

\end{document}